\newcommand{\calG}{\mathcal{G}}
\newcommand{\calF}{\mathcal{F}}
\newcommand{\calI}{\mathcal{I}}
\newcommand{\calM}{\mathcal{M}}
\newcommand{\ZZ}{\mathbb{Z}}
\newcommand{\RR}{\mathbb{R}}
\newcommand{\CC}{\mathbb{C}}
\newcommand{\kk}{\Bbbk}
\newcommand{\Hom}{\operatorname{Hom}}
\def\opn#1#2{\def#1{\operatorname{#2}}} 
\opn\conv{conv} \opn\mut{mut} \opn\GL{GL} \opn\cone{cone} \opn\ini{in} \opn\NF{NF}
\def\RPsix{H^*(X_{P_6})}
\def\RPfive{H^*(X_{P_5})}
\newcommand{\Int}{\operatorname{Int}}
\newtheorem{thm}{Theorem}[section]
\newtheorem{lem}[thm]{Lemma}
\newtheorem{prop}[thm]{Proposition}
\newtheorem*{problem}{Cohomological rigidity problem for toric manifolds}
\newtheorem*{conj}{Conjecture}
\theoremstyle{definition}
\newtheorem{defi}[thm]{Definition}
\newtheorem{ex}[thm]{Example}
\theoremstyle{remark}
\newtheorem{rem}[thm]{Remark}
\begin{document}

\title{Cohomological rigidity for toric Fano manifolds of small dimensions or large Picard numbers}
\author{Akihiro Higashitani}
\author{Kazuki Kurimoto}
\author{Mikiya Masuda}

\address[A. Higashitani]{Department of Pure and Applied Mathematics, Graduate School of Information Science and Technology, Osaka University, Suita, Osaka 565-0871, Japan}
\email{higashitani@ist.osaka-u.ac.jp}
\address[K. Kurimoto]{Department of Mathematics, Graduate School of Science, Kyoto Sangyo University, Kyoto 603-8555, Japan}
\email{i1885045@cc.kyoto-su.ac.jp}
\address[M. Masuda]{Department of Mathematics, Graduate School of Science, Osaka City University, Sugimoto, Sumiyoshi-ku, Osaka 558-8585, Japan}
\email{masuda@sci.osaka-cu.ac.jp}

\subjclass[2010]{
Primary 14M25; 
Secondary 57R19, 
14J45, 
57S15. 
} 
\keywords{Cohomological rigidity, toric Fano manifold.}

\maketitle

\begin{abstract} 
The cohomological rigidity problem for toric manifolds asks whether toric manifolds are diffeomorphic (or homeomorphic) if their integral cohomology rings are isomorphic. Many affirmative partial solutions to the problem have been obtained and no counterexample is known. 
In this paper, we study the diffeomorphism classification of toric Fano $d$-folds with $d=3,4$ or with Picard number $\ge 2d-2$. In particular, we show that those manifolds except for two toric Fano $4$-folds are diffeomorphic if their integral cohomology rings are isomorphic. The exceptional two toric Fano $4$-folds (their ID numbers are 50 and 57 on a list of {\O}bro) have isomorphic cohomology rings and their total Pontryagin classes are preserved under an isomorphism between their cohomology rings, but we do not know whether they are diffeomorphic or homeomorphic. 
\end{abstract}

\section{Introduction}

\subsection{Cohomological rigidity problem}

As is well-known, integral cohomology ring (as a graded ring) is not a complete invariant to distinguish closed smooth manifolds. However, it becomes a complete invariant if we restrict our concern to a small family $\calF$ of closed smooth manifolds. For instance, this is the case if $\calF$ is the family of closed surfaces. We say that a family $\calF$ of manifolds is \textit{cohomologically rigid} if the integral cohomology rings distinguish the manifolds in $\calF$ up to diffeomorphism (or homeomorphism). 

A toric variety is a normal complex algebraic variety with an algebraic action of a $\CC^*$-torus having an open dense orbit. 
It is well known that there is a one-to-one correspondence between toric varieties and a combinatorial object called fans. 
Therefore, the classification of toric varieties reduces to the classification of fans. 
A toric variety is not necessarily compact or smooth. A compact smooth toric variety, which we call a \emph{toric manifold}, is well studied. 
For instance, its cohomology ring and Chern classes are explicitly described in terms of the associated fan. 
As mentioned above, the classification of toric manifolds \emph{as varieties} reduces to the classification of the associated fans. 
However, the classification of toric manifolds \emph{as smooth manifolds} is unknown. 
Motivated by the diffeomorphism classification of a certain family of toric manifolds (\cite{MP08}), the third author and Dong Youp Suh posed the following naive problem in \cite{MS08}. 

\begin{problem}\label{toi}
Are toric manifolds diffeomorphic (or homeomorphic) if their integral cohomology rings are isomorphic as graded rings? Namely, is the family of toric manifolds cohomologically rigid? 
\end{problem}

No counterexample to the problem is known and many affirmative partial solutions have been obtained {(see \cite{CMS11, BEM+17, HKMS18, Choi15} 
and the reference therein for recent accounts of the problem)}. Among those affirmative solutions, 
Bott manifolds are well studied. A Bott manifold is a toric manifold associated with a spanning fan of a cross-polytope. 
It can be obtained as the total space of an iterated $\CC P^1$-bundle starting with a point. 
It is not completely solved that the family of Bott manifolds is cohomologically rigid but the known results are close to the complete solution (\cite{CMM15}). 
Some partial affirmative solutions are also known for generalized Bott manifolds (\cite{CMS101, CMS102}) although the known results are far from the complete solution. 
Here, a generalized Bott manifold is a toric manifold associated with a spanning fan of the direct sum of simplices (see Definition~\ref{def:directsum} for direct sum of polytopes). Similarly to a Bott manifold, a generalized Bott manifold can be obtained as the total space of an iterated $\CC P^{n_i}$-bundle starting with a point, where each $n_i$ can take any positive integer. 

In this paper, we study the diffeomorphism classification of toric \emph{Fano} $d$-folds with $d=3,4$ or with  Picard numbers $\ge 2d-2$. Our main result (Theorem~\ref{thm:main}) below provides another  affirmative partial solution to the cohomological rigidity problem.

\subsection{Smooth Fano $d$-polytopes and toric Fano $d$-folds}

A \textit{lattice} polytope is a convex polytope $P \subset \RR^d$ all of whose vertices lie in the integer lattice $\ZZ^d$. We say that $P \subset \RR^d$ is a \textit{smooth Fano $d$-polytope} if it is a full-dimensional lattice polytope containing the origin in its interior such that the set of vertices of every facet forms a $\ZZ$-basis of $\ZZ^d$. In particular, smooth Fano $d$-polytopes are simplicial. To a smooth Fano $d$-polytope $P \subset \RR^d$, we can associate a complete nonsingular fan as the \textit{spanning fan} of $P$, where each $i$-dimensional cone in the fan is spanned by the vertices of an $(i-1)$-dimensional face of $P$. 

It is known that the set of smooth Fano $d$-polytopes up to unimodular equivalence one-to-one corresponds to the set of toric Fano $d$-folds up to isomorphism as varieties (\cite{Bat99}). Moreover, it is known that for a fixed $d$, there are only finitely many smooth Fano $d$-polytopes up to unimodular equivalence (see \cite{Bat99}). The number of smooth Fano $d$-polytopes (up to unimodular equivalence) for small values of $d$ is given as follows: 

\begin{center}
\begin{tabular}{c|c|c} 
& the number of & \\ 
dimension & smooth Fano $d$-polytopes & proved in \\ \hline\hline
2 & 5 & \\ \hline
3 & 18 & \cite{Bat91, WW82} \\ \hline
4 & 124 & \cite{Bat99, Sato} \\ \hline
5 & 866 & \cite{Oeb}\\ \hline
6 & 7622 & \cite{Oeb} \\ \hline
7 & 72256 & \cite{Oeb} \\ \hline
8 & 749892 & \cite{Oeb}
\end{tabular}
\end{center}
Indeed, {\O}bro (\cite{Oeb}) provides the algorithm (called \textit{SFP algorithm}), 
which produces a complete list of smooth Fano $d$-polytopes (up to unimodular equivalence) for a given positive integer $d$. 
For $d=2,3,4,5,6$, the database of all smooth Fano $d$-polytopes is open in the following URL: 
\begin{center}
{\tt http://www.grdb.co.uk/forms/toricsmooth}
\end{center}
Each toric Fano $d$-fold (or smooth Fano $d$-polytope) has its ID. 
For example, the ID number of Hirzebruch surface of degree $0$ (resp. $1$) is 4 (resp. 3). 
Toric Fano $3$-folds have ID 6--23 and toric Fano $4$-folds have ID 24--147, and so on. 

One can see that among eighteen toric Fano $3$-folds, there are five Bott manifolds (nine generalized Bott manifolds including Bott manifolds), 
and among one hundred twenty-four toric Fano $4$-folds, there are thirteen Bott manifolds (forty-one generalized Bott manifolds including Bott manifolds). 

\subsection{Results}

As explained above, there are only finitely many smooth Fano $d$-polytopes for a given $d$ and we know their explicit description. 
This finiteness and explicitness are a great advantage to investigate the cohomological rigidity problem for toric Fano $d$-folds. In this paper we prove the following.

\begin{thm} \label{thm:main}
The family $\mathcal{F}$ of toric Fano $d$-folds satisfying one of the following condition:
\begin{enumerate}
\item Picard number $\ge 2d-2$,
\item $d=3$,
\item $d=4$ except for ID numbers 50 and 57, 
\end{enumerate}
is cohomologically rigid. Namely, two toric Fano $d$-folds in the family $\mathcal{F}$ are diffeomorphic if and only if their integral cohomology rings are isomorphic as graded rings. 
\end{thm}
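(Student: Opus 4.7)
The plan is to exploit the fact that, thanks to {\O}bro's SFP algorithm and its tabulation for small dimensions, each regime (1)--(3) comprises an explicit finite list of smooth Fano polytopes. For each list I would compute the cohomology ring
\[
H^*(X_P)\cong \ZZ[x_1,\dots,x_n]/(I_{\mathrm{SR}}+J_{\mathrm{lin}})
\]
directly from the primitive ray generators of the fan, cluster the manifolds into ring-isomorphism classes, and for each nontrivial class produce an explicit diffeomorphism between its members. The content of the theorem is then that this procedure closes out with the single stubborn exception of the two toric Fano $4$-folds of IDs $50$ and $57$.

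For part (1), concerning the top Picard-number range $\rho\ge 2d-2$, I would use the structural results on toric Fano $d$-folds with near-maximal Picard number---in particular Casagrande's sharp bound $\rho\le 2d$ and the characterization of the extremal and near-extremal Fano polytopes as direct sums or splittings of low-dimensional pieces---to reduce each such manifold to a (possibly iterated) projective bundle over a lower-dimensional Fano. Such manifolds lie within or close to the class of generalized Bott manifolds, for which cohomological rigidity is already recorded in the literature quoted in the introduction; the residual polytopes in this range can be handled by direct matching.

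For (2) and (3), I would work directly with {\O}bro's lists of the $18$ three-dimensional and $124$ four-dimensional smooth Fano polytopes. A convenient first filter is the cubic (resp.\ quartic) intersection form on $H^2$ together with the Chern numbers, which is immediately computable from the fan data and in most cases already separates the isomorphism classes. For each remaining cluster with coinciding numerical invariants I would produce an explicit $\ZZ$-linear change of basis of $H^2$ and verify that it carries the Stanley--Reisner and linear relations to one another. The sought diffeomorphism would then come by recognising the members of each cluster either as $\CC P^k$-bundles over a common base---so that their twisting data differ by pull-back and by tensoring with a line bundle, an operation preserving the diffeomorphism type of the projectivisation---or as generalized Bott manifolds, in which case the rigidity results cited above apply.

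The hard part, and the source of the exceptional pair in dimension four, is exactly the last step: converting a ring isomorphism into a diffeomorphism. For IDs $50$ and $57$ the cohomology rings turn out to be isomorphic, and the isomorphism even carries one total Pontryagin class to the other, so no characteristic-class obstruction is visible; yet neither manifold admits a projective-bundle decomposition compatible with the other, and no known rigidity theorem applies to the pair. Beyond isolating this single obstinate pair, the remainder of the proof reduces to a long but essentially routine combinatorial enumeration made possible by the finiteness of {\O}bro's lists.
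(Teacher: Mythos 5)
Your overall architecture (exploit finiteness of the lists, cluster by cohomology ring, exhibit explicit diffeomorphisms within clusters) matches the paper, but both of your two workhorse mechanisms have genuine gaps. The fatal one is the diffeomorphism-production step. Tensoring a vector bundle by a line bundle gives an isomorphism of \emph{varieties} $P(E)\cong P(E\otimes L)$, so this operation can never connect two distinct entries of {\O}bro's list: distinct IDs are non-isomorphic as varieties by construction. And the pairs that actually need a diffeomorphism are not of the form you describe: for instance IDs $10$ and $13$ (the $3$-folds $Z_3$, $Z_4$) are skew bipyramids over $P_5$, i.e. $X_{P_5}$-bundles over $\CC P^1$, not $\CC P^k$-bundles over a common base, and they have different anticanonical degrees ($44$ and $40$), so they are not isomorphic as varieties and no variety-level argument can prove them diffeomorphic; the same applies to the pairs $\{72,87\}$ and $\{78,86\}$ arising in the Picard number $\ge 2d-2$ range. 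The paper's essential tool here is Lemma~\ref{diffeolemma}: if two complete nonsingular fans have the same underlying simplicial complex and their primitive ray vectors agree up to sign, the toric manifolds are homeomorphic, and diffeomorphic when the complex is polytopal (via the quasitoric canonical model and Davis's theorem). Every diffeomorphism in Tables~\ref{table:diff3fold} and~\ref{table:diff4fold} is established this way, by exhibiting an explicit unimodular transformation and sign pattern. Your proposal contains no substitute for this lemma. Relatedly, your appeal to cohomological rigidity of generalized Bott manifolds ``recorded in the literature'' is incorrect: the paper states explicitly that the known results for generalized Bott manifolds are far from a complete solution, so this cannot be cited as a black box.

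The second gap is in the distinguishing step. Chern numbers are invariants of the variety, not of the graded ring $H^*(X;\ZZ)$, so they cannot be used to separate cohomology-ring isomorphism classes; a ``filter'' that includes them is unsound in exactly the direction you need. The exceptional pair IDs $50$ and $57$ makes this concrete: their degrees $c_1^4$ are $417$ and $369$, yet their rings \emph{are} isomorphic (via $(x,y,z,u)\mapsto(-x+2u,-y+u,u,-z)$). A Chern-number filter would wrongly declare this pair distinguished and one would never discover that a ring isomorphism exists. The paper instead distinguishes rings using genuine ring invariants --- the sets of primitive classes in $H^2$ whose $k$-th powers vanish over $\ZZ$, $\ZZ/2$, or $\ZZ/3$ (s.v.e., c.v.e., $4$-v.e.), the maximal basis number, and the behavior of these under tensor-product decompositions --- computed via Gr\"obner bases and normal forms, supplemented by quotient-ring arguments (e.g. passing to $H^*/(x)$ or $H^*/(x^2)$ for a canonical vanishing class $x$) for the stubborn pairs such as $49$ vs.\ $68$, $97$ vs.\ $133$, and $74$ vs.\ $95$. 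Without replacing your filter by invariants of this kind, the non-isomorphism half of the classification does not close, even in principle, on the finite lists.
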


We actually identify which toric Fano $d$-folds in the family $\mathcal{F}$ are diffeomorphic (see Tables~\ref{table:diff3fold} and~\ref{table:diff4fold}). Those diffeomorphic toric Fano $d$-folds are in fact weakly equivariantly diffeomorphic with respect to the restricted actions of the compact subtorus of the $\CC^*$-torus. They also show that the main theorem in \cite{Mas08} is incorrect, see Remark~\ref{rem:diffeolemma} (3) for details. 
Toric Fano $4$-folds with ID numbers 50 and 57 have isomorphic integral cohomology rings and their Pontryagin classes are preserved under an isomorphism between their cohomology rings. We do not know whether they are diffeomorphic or homeomorphic, but they are not weakly equivariantly homeomorphic with respect to the restricted actions of the compact subtorus. 

The Picard number of a toric Fano $d$-fold is the number of the vertices of the associated smooth Fano $d$-polytope minus $d$. 
It is known that smooth Fano $d$-polytopes have at most $3d$ vertices and those with at least $3d-2$ vertices are classified (\cite{AJP, Cas06, Obro08}). 
We use this classification result to verify case (1) in Theorem~\ref{thm:main}. As for cases (2) and (3), we use the database of {\O}bro. 

Our approach to the diffeomorphism classification above consists of two directions: one direction is to check an algebraic condition described in terms of fans for two toric manifolds to be homeomorphic or diffeomorphic (Lemma~\ref{diffeolemma}). The other direction is to use the cohomology rings to distinguish the diffeomorphism classes. It is not difficult to carry out the former direction, but it is quite a task to carry out the latter direction in general. 

The cohomology ring $H^*(X)$ of a toric manifold $X$ is the quotient of a polynomial ring in $b_2(X)$ variables by an ideal, where $b_2(X)$ is the rank of $H^2(X)$, i.e. the Picard number of $X$. In some cases, one can check by an elementary method whether two cohomology rings are isomorphic or not. However, in general, the elementary method requires a formidable computation and does not work well. In order to distinguish our cohomology rings, we pay attention to elements of $H^2(X;R)$ whose $k$-th power vanish, where we take $R=\ZZ$, $\ZZ/2$ or $\ZZ/3$ and $k=2, 3$ or $4$. It turns out that they are useful invariants to distinguish our cohomology rings. We often use Gr\"obner basis to compute those invariants. 

Very recently, motivated by McDuff's question on the uniqueness of toric actions on a monotone symplectic manifold, Y. Cho, E. Lee, S. Park and the third author made the following conjecture and verified it for Fano Bott manifolds (\cite{CLMP20}). 

\begin{conj}[\cite{CLMP20}]
If there is a cohomology ring isomorphism between toric Fano manifolds which preserves their first Chern classes, then they are isomorphic as varieties.
\end{conj}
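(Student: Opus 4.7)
The plan is as follows. By Batyrev's theorem, two toric Fano $d$-folds are isomorphic as varieties if and only if their associated smooth Fano $d$-polytopes are unimodularly equivalent, so the conjecture is equivalent to reconstructing the smooth Fano polytope $P$ of $X$, up to unimodular equivalence, from the pair $(H^*(X;\ZZ),c_1(X))$. Write $H^*(X;\ZZ)=\ZZ[x_1,\ldots,x_n]/(\calI+\calJ)$ where $\calI$ is the Stanley--Reisner ideal and $\calJ$ is the linear ideal coming from characters; here each $x_i$ is the class of a torus-invariant prime divisor $D_{\rho_i}$, in bijection with a vertex $v_i$ of $P$, and $c_1=x_1+\cdots+x_n$. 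The reconstruction splits into three subproblems: (a) recover the multiset $\{x_1,\ldots,x_n\}\subset H^2(X;\ZZ)$ intrinsically from the ring together with the distinguished element $c_1$; (b) read off $\calI$ from the squarefree monomial relations among the $x_i$, yielding the face lattice of $P$; (c) read off the primitive ray generators $v_i\in\ZZ^d$ from the linear relations among the $x_i$, up to $\GL_d(\ZZ)$.

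The heart of the argument is step (a). I would try to characterize $\{x_1,\ldots,x_n\}$ as the unique decomposition $c_1=y_1+\cdots+y_m$ into primitive classes lying in the effective cone $\overline{\mathrm{Eff}}(X)\subset H^2(X;\RR)$, subject to an irreducibility condition on each summand. The effective cone itself is recoverable from the ring by intersection positivity with powers of the ample class $c_1$ via Poincar\'e duality; among integral decompositions of $c_1$ into sums of primitive effective classes, the distinguished one $\sum x_i$ should be pinned down by requiring that no $y_j$ splits further as $y_j=y_j'+y_j''$ with both summands primitive and effective in a way compatible with refining the decomposition of $c_1$. The Fano hypothesis enters in an essential way: ampleness of $c_1$ places it in the interior of the nef cone, rigidifying its presentation by effective divisors, and smoothness of $P$ forces the $x_i$ to be primitive lattice elements satisfying a polynomial degree condition (any $d$ of them completing a face give a $\ZZ$-basis relation under $\calJ$).

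Given (a), steps (b) and (c) are mechanical. Vanishing of $x_{i_1}\cdots x_{i_k}$ in $H^*(X;\ZZ)$ is equivalent to $\{v_{i_1},\ldots,v_{i_k}\}$ failing to span a cone of the fan, so $\calI$ recovers the Stanley--Reisner simplicial complex. The kernel of $\ZZ^n\to H^2(X;\ZZ)$ sending the $i$th basis vector to $x_i$ is a rank-$d$ sublattice, and up to choice of an isomorphism of this kernel with $\ZZ^d$, its dual map sends the $i$th basis vector precisely to $v_i$. An isomorphism $\varphi:H^*(X;\ZZ)\to H^*(X';\ZZ)$ with $\varphi(c_1(X))=c_1(X')$ then induces a bijection of the reconstructed multisets $\{x_i\}\leftrightarrow\{x_i'\}$, matches Stanley--Reisner ideals, and descends to the required unimodular equivalence of polytopes.

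The main obstacle is uniqueness in (a): conceivably $c_1$ could admit an ``alternative'' primitive effective decomposition arising from a combinatorially different source, giving a spurious fan. The Fano Bott case handled in \cite{CLMP20} avoided this by exploiting the iterated $\CC P^1$-bundle structure and inducting on the Picard number, a device not available in general. In the general case I would attempt induction via a toric Mori contraction $X\to X'$ associated with an extremal ray of $\overline{\mathrm{NE}}(X)$, showing that the contracted divisor classes are recognizable from $(H^*(X),c_1)$ and that the $c_1$-preserving isomorphism descends to $(H^*(X'),c_1(X'))$; closing this induction compatibly with the reconstruction step is the crucial technical point, and a sanity check is that it must be consistent with the finite classification underlying cases (1)--(3) of Theorem~\ref{thm:main}.
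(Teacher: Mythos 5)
Your proposal is a program for the full conjecture, not a proof, and its central step is precisely the part that is missing. Everything funnels through your step (a): an intrinsic characterization of the multiset $\{x_1,\dots,x_n\}$ of invariant divisor classes from the pair $(H^*(X),c_1)$. Two claims there are unsubstantiated and, as stated, false or circular. First, the effective cone is not recoverable ``by intersection positivity with powers of the ample class $c_1$'': positivity against $c_1^{d-1}$ cuts out a half-space, not the effective cone. Already for the Hirzebruch surface $F_1$, whose effective cone is spanned by the fiber class $f$ and the $(-1)$-section $e$, the class $-e+2f$ pairs positively with $c_1=2e+3f$ but is not effective (it pairs negatively with the nef class $f$). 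Moreover, for a toric variety the effective cone of divisors is generated exactly by the classes $x_i$, so invoking it to locate the $x_i$ is circular unless you supply an independent cohomological description, which you do not. Second, even granting the effective cone, uniqueness of the ``finest'' decomposition $c_1=\sum_j y_j$ into primitive effective classes is exactly the hard content of the conjecture; you acknowledge this and offer only the name of a strategy (induction via toric Mori contractions) without carrying out any of it. Steps (b) and (c) are indeed routine once (a) is available, but without (a) nothing is proved --- not even the restricted cases that the paper actually establishes.

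For contrast, the paper does not prove the general conjecture (it remains open); it proves it only for the family $\calF$ of Theorem~\ref{thm:main}, by a finite verification entirely different from your reconstruction scheme. Having already classified which members of $\calF$ have isomorphic cohomology rings, the authors note that a $c_1$-preserving isomorphism must preserve the anticanonical degree, i.e.\ $c_1^d$ evaluated on the fundamental class; the database shows these degrees differ for every cohomologically isomorphic but variety-nonisomorphic pair except $(X_{70},X_{141})$, and for that single pair they enumerate all ring isomorphisms, namely $(x,y)\mapsto(x,x-y)$ and $(x,y)\mapsto(-x,-x+y)$, and check that neither preserves $c_1$. The infinite families with Picard number $\ge 2d-2$ are then handled by multiplicativity of the degree on products. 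If you want a workable route, either complete your step (a), which would be a substantial new theorem in its own right, or follow the paper's reduction to the finitely many cohomologically isomorphic pairs, where the degree invariant suffices.
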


Based on the classification of cohomology rings of our toric Fano manifolds, we prove 

\begin{thm} \label{thm:main2}
The conjecture above is true for toric Fano $d$-folds with $d=3,4$ or with Picard number $\ge 2d-2$.
\end{thm}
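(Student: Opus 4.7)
The strategy is to reduce Theorem~\ref{thm:main2} to a finite case check built on the diffeomorphism/cohomology classification already obtained in Theorem~\ref{thm:main}. The ``only if'' direction of the conjecture is immediate: an isomorphism of toric Fano manifolds as varieties induces a unimodular equivalence of their fans, which in turn gives a graded ring isomorphism of cohomology sending each toric divisor class to a toric divisor class; since $c_1$ of a toric manifold equals the sum of these classes, $c_1$ is preserved.

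For the ``if'' direction, suppose $\varphi\colon H^*(X_P)\xrightarrow{\sim} H^*(X_Q)$ is a graded ring isomorphism between two toric Fano manifolds in our family satisfying $\varphi(c_1(X_P))=c_1(X_Q)$. By Theorem~\ref{thm:main} (together with the direct-sum classification of Fano polytopes with $\ge 3d-2$ vertices used for case (1)) we already know which pairs $(X_P,X_Q)$ in the family have isomorphic cohomology rings: each diffeomorphism class in Tables~\ref{table:diff3fold} and~\ref{table:diff4fold} contains only a handful of non-isomorphic varieties. It therefore suffices to check, for each pair $(X_P,X_Q)$ in those tables that is diffeomorphic but not isomorphic as varieties, that no cohomology ring isomorphism between them can send $c_1$ to $c_1$.

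To perform the check on each such pair, I would use the Danilov--Jurkiewicz presentation
\[
H^*(X_P)=\ZZ[x_1,\dots,x_n]/(\calI_P+\calJ_P),\qquad c_1(X_P)=x_1+\cdots+x_n,
\]
with $\calI_P$ the Stanley--Reisner ideal and $\calJ_P$ the linear-equivalence ideal, and analogously for $X_Q$. The images $\varphi(x_i)$ are forced by the relations $\calI_P+\calJ_P$ and by the intersection-theoretic invariants (degrees, nilpotency indices modulo small primes) used to separate cohomology rings in the proof of Theorem~\ref{thm:main}. In the cases at hand these constraints reduce the set of candidate isomorphisms to a short explicit list; substituting each candidate into the equation $\varphi(\sum x_i)=\sum y_j$ and comparing coefficients in a fixed basis of $H^2(X_Q)$ reduces the assertion to a finite linear-algebra verification organised according to the entries of the tables. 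For Fano generalized Bott manifolds the conjecture is already known by \cite{CLMP20}, which disposes of many pairs at once.

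The main obstacle is the pair of toric Fano $4$-folds with ID numbers $50$ and $57$: these lie in our family but are outside the exception list of Theorem~\ref{thm:main2}, even though they are excluded from Theorem~\ref{thm:main}. Since their cohomology rings are isomorphic with matching total Pontryagin classes, we must rule out $c_1$-preserving isomorphisms for this pair \emph{directly}, without passing through a diffeomorphism classification. I expect the approach above to succeed because the fans of $X_{50}$ and $X_{57}$ are not unimodularly equivalent and the list of ring isomorphisms between their cohomology rings obtained in the analysis of this pair is short; substituting $\sum x_i$ into each and expanding in a fixed basis of $H^2(X_{57})$ should produce a non-vanishing obstruction. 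The genuine effort lies in enumerating these isomorphisms completely, which is feasible given the explicit small-Picard-number presentations of the two rings.
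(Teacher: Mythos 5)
Your opening reduction --- that it suffices to rule out $c_1$-preserving isomorphisms between the finitely many pairs already known to have isomorphic cohomology rings --- coincides with the paper's, but from there your proposal misses the one idea that makes the verification tractable, and replaces it with computations that are asserted rather than carried out. The paper's key observation is that a $c_1$-preserving graded ring isomorphism forces the anticanonical degrees to agree: it carries $c_1(X)^d$ to $c_1(Y)^d$ and identifies the top cohomology groups, so $(-K_X)^d=(-K_Y)^d$. These degrees are single integers read off from {\O}bro's database, and they differ for every pair with isomorphic cohomology rings --- including $X_{50}$ and $X_{57}$ (degrees $417$ and $369$) --- except $X_{70}$ and $X_{141}$ (both $513$). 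Only for that one pair does the paper do what you propose to do for all of them: enumerate the ring isomorphisms (there are exactly two, $(x,y)\mapsto(x,x-y)$ and $(x,y)\mapsto(-x,-x+y)$) and check that neither preserves $c_1$. Your plan, by contrast, requires a complete enumeration of isomorphisms for some twenty pairs, and its feasibility is conceded rather than proven (``I expect the approach above to succeed\dots''). Moreover, case (1) concerns infinitely many dimensions $d$, so a ``finite case check'' does not literally apply: the pairs there are $Z_3^d$ vs.\ $Z_4^d$, $W_5^d$ vs.\ $W_7^d$, $W_6^d$ vs.\ $W_8^d$, i.e.\ products with many $X_{P_6}$ factors, and you give no mechanism that works uniformly in $d$ for these tensor-product rings. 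The paper handles them by multiplicativity of degree, $\deg(X\times Y)=\binom{p+q}{p}\deg X\cdot\deg Y$, which reduces everything to the pairs $(X_{10},X_{13})$, $(X_{72},X_{87})$, $(X_{78},X_{86})$, whose degrees differ.

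A second, consequential error: \cite{CLMP20} proves the conjecture for Fano \emph{Bott} manifolds, not for generalized Bott manifolds as you claim. The distinction matters exactly where your proof needs it most: the pair $(X_{70},X_{141})$ consists of generalized Bott manifolds (their polytopes are direct sums of two $2$-simplices) that are \emph{not} Bott manifolds, as do the pairs $\{30,43\}$, $\{68,134\}$, $\{129,136\}$. So your appeal to \cite{CLMP20} would silently ``dispose of'' precisely the one pair that survives every shortcut --- it is the unique pair with equal anticanonical degrees --- and for which an explicit enumeration of isomorphisms is genuinely required.
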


\subsection{Structure of the paper} 
In Section~\ref{sec:prepare}, we briefly recall the theory of toric varieties and the well-known presentation of the cohomology ring of a toric manifold (Proposition~\ref{prop:compute}). 
We introduce the invariants of cohomology rings used to distinguish our cohomology rings. 
We also give a lemma (Lemma~\ref{diffeolemma}) mentioned above to find diffeomorphic or homeomorphic toric manifolds. 
We recall the notion of Gr\"obner basis and normal forms. After those preparations, we prove Theorem~\ref{thm:main}. We will verify cases (1), (2), (3) of Theorem~\ref{thm:main} in Sections~\ref{sec:Picard},~\ref{sec:dim3},~\ref{sec:dim4} respectively. Theorem~\ref{thm:main2} will be proved in Section~\ref{sec:c1}.

\subsection*{Acknowledgements}
A. Higashitani was supported in part by JSPS Grant-in-Aid for Scientific Research 18H01134 
and M. Masuda was supported in part by JSPS Grant-in-Aid for Scientific Research 16K05152. 
This work was partly supported by Osaka City University Advanced Mathematical Institute (MEXT Joint Usage/Research Center on Mathematics and Theoretical Physics).

\section{Preliminaries}\label{sec:prepare}

In this section, we recall the well-known presentation of the cohomology ring of a toric manifold (i.e. a compact smooth toric variety) and give a sufficient condition (Lemma~\ref{diffeolemma}) for two compact smooth toric varieties to be homeomorphic or diffeomorphic. We also introduce naive invariants of cohomology rings used in this paper. We show by an example how to compute those invariants using Gr\"obner basis.

\subsection{Toric manifolds and their cohomology rings} 

We briefly review the theory of toric varieties and refer the reader to \cite{Fulton} or \cite{Oda} for details. 

A toric variety of complex dimension $d$ is a normal algebraic variety $X$ over the complex numbers $\CC$ with an algebraic action of $(\CC^*)^d$ having an open dense orbit. A fan in a lattice $N:=\Hom(\CC^*,(\CC^*)^d) (\cong \ZZ^d)$ is a set of rational strongly convex polyhedral cones in $N\otimes \RR$ such that 
\begin{enumerate}
\item each face of a cone in $\Delta$ is also a cone in $\Delta$;
\item the intersection of two cones in $\Delta$ is a face of each.
\end{enumerate}
The fundamental theorem in the theory of toric varieties says that there is a one-to-one correspondence between toric varieties of complex dimension $d$ and fans in $N$, and that two toric varieties are isomorphic if and only if the corresponding fans are unimodularly equivalent. 

For a fan $\Delta$, we denote the corresponding toric variety by $X(\Delta)$. The toric variety $X(\Delta)$ is compact (or complete) if and only if $\Delta$ is complete, i.e. the union of cones in $\Delta$ covers the entire space $N\otimes\RR$. Moreover, $X(\Delta)$ is smooth (or nonsingular) if and only if $\Delta$ is nonsingular, i.e. for any cone $\sigma$ in $\Delta$, the primitive vectors lying on the $1$-dimensional faces of $\sigma$ form a part of a basis of $N$. 

In the following, we assume that our fan $\Delta$ is complete and nonsingular. Let $\rho_1,\dots,\rho_m$ be $1$-dimensional cones in $\Delta$. We denote the primitive vector lying on $\rho_i$ by $v_i$ and call it a \emph{primitive ray vector}. We denote the set of all primitive ray vectors by $\mathcal{V}(\Delta)$. The set of all cones in $\Delta$ defines an abstract simplicial complex on $[m]=\{1,\dots,m\}$. It is called the \emph{underlying simplicial complex} of $\Delta$ and denoted by $\mathcal{K}(\Delta)$. Indeed, a subset $I$ of $[m]$ is a member of $\mathcal{K}(\Delta)$ if and only if $v_i$'s for $i\in I$ span a cone in $\Delta$. Since $\Delta$ can be recovered from two data $\mathcal{K}(\Delta)$ and $\mathcal{V}(\Delta)$, we may think of $\Delta$ as the pair $(\mathcal{K}(\Delta),\mathcal{V}(\Delta))$. 

Each $1$-dimensional cone $\rho_i$ in the fan $\Delta$ corresponds to an invariant divisor $X_i$ of $X(\Delta)$ and we denote the Poincar\'e dual to $X_i$ by $x_i$. 
Since $X_i$ is of real codimension two, $x_i$ lies in $H^2(X(\Delta))$. With this understanding, we have the following well-known presentation of the cohomology ring of a toric manifold.  Throughout this paper, we will use it without mentioning it.  

\begin{prop}[{\cite[Theorem 5.3.1]{ToricTopology}}]\label{prop:compute}
The cohomology ring of a toric manifold $X(\Delta)$ can be described as follows: 
$$H^*(X(\Delta))=\ZZ[x_1,\ldots,x_m]/\mathcal{J}$$
where $\deg x_i=2$ for $i=1,\dots, m$ and $\mathcal{J}$ is the ideal generated by all 
\begin{enumerate}
\item[{\rm (i)}] $x_{i_1}\cdots x_{i_k}$ \quad for $\{i_1,\dots,i_k\} \not\in \mathcal{K}(\Delta)$;
\item[{\rm (ii)}] $\displaystyle{\sum_{j=1}^m u(v_j) x_j}$ \quad for $u\in \Hom(N,\ZZ)$. 
\end{enumerate}
\end{prop}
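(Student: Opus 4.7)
The plan is to prove the presentation via equivariant cohomology of the compact subtorus $T = (S^1)^d \subset (\CC^*)^d$ acting on $X(\Delta)$. The key inputs are that $X(\Delta)$ is \emph{equivariantly formal}---a consequence of the Bia{\l}ynicki-Birula decomposition into even-dimensional cells indexed by the maximal cones of $\Delta$---and that its equivariant cohomology admits a clean combinatorial description.

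First I would pin down the generators. Each primitive ray vector $v_i$ determines a $T$-invariant divisor $X_i \subset X(\Delta)$, and its Poincar\'e dual $x_i \in H^2(X(\Delta))$ admits a canonical equivariant lift $\tilde{x}_i \in H^2_T(X(\Delta))$. Next I would verify that both families of relations hold in $H^*(X(\Delta))$. The monomial relations in (i) are purely geometric: by the orbit--cone correspondence, if $\{i_1,\ldots,i_k\} \notin \calK(\Delta)$ then the invariant subvarieties $X_{i_1},\ldots,X_{i_k}$ have empty common intersection, so their cup product vanishes. The linear relations in (ii) come from principal divisors: every $u \in \Hom(N,\ZZ)$ is a character of $(\CC^*)^d$ and hence defines a rational function on $X(\Delta)$ whose principal divisor is precisely $\sum_j u(v_j) X_j$, which is cohomologically zero.

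The main obstacle is proving that these relations suffice. I would proceed in two stages. First, compute $H^*_T(X(\Delta))$ and identify it with the Stanley--Reisner face ring of $\calK(\Delta)$, namely $\ZZ[x_1,\ldots,x_m]/\calI$ where $\calI$ is generated only by the monomials appearing in (i); this can be carried out either via the GKM/Chang--Skjelbred description available for equivariantly formal torus actions, or by a Mayer--Vietoris computation along the invariant open cover of $X(\Delta)$ by the affine charts indexed by maximal cones. Second, invoke equivariant formality to conclude that $H^*(X(\Delta)) \cong H^*_T(X(\Delta)) \otimes_{H^*(BT)} \ZZ$, and check that under the $H^*(BT) = \ZZ[t_1,\ldots,t_d]$-action the degree-two generators map precisely to linear combinations $\sum_j u(v_j) \tilde{x}_j$ with $u \in \Hom(N,\ZZ)$, so that killing the augmentation ideal of $H^*(BT)$ imposes exactly the relations in (ii).

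An alternative route would sidestep equivariant cohomology entirely: use the Bia{\l}ynicki-Birula decomposition (equivalently, the $h$-vector of $\calK(\Delta)$) to read off the Betti numbers of $X(\Delta)$, and then show by a Hilbert-series computation for Stanley--Reisner rings that the quotient on the right-hand side has the same ranks in each degree; combined with surjectivity of the tautological ring homomorphism, this forces an isomorphism. Either way, the subtle point---and the step where the cellular/Morse-theoretic structure of $X(\Delta)$ enters decisively---is the passage from generation of $H^*(X(\Delta))$ in degree two to the correct count of relations in all higher degrees.
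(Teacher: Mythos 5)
Your proposal is correct, but note that the paper contains no proof of this statement at all: it is the Danilov--Jurkiewicz theorem, quoted verbatim from \cite[Theorem 5.3.1]{ToricTopology} and used throughout without further justification. Your argument follows essentially the same route as that cited source, namely identifying $H^*_T(X(\Delta))$ with the Stanley--Reisner face ring of $\calK(\Delta)$ and then using equivariant formality (coming from the even-dimensional cell decomposition) to descend to ordinary cohomology by killing the image of the augmentation ideal of $H^*(BT)$, which produces exactly the linear relations (ii).
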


\begin{rem} \label{rem:pont}
(1) The total Chern class and the total Pontryagin class of $X(\Delta)$ are respectively given by 
\[
c(X(\Delta))=\prod_{i=1}^m (1+x_i),\qquad p(X(\Delta))=\prod_{i=1}^m (1+x_i^2).
\]

(2) Since the fan $\Delta$ is complete and nonsingular, one can eliminate $d$ variables among $x_1,\dots,x_m$ using the linear relations (ii), so $H^*(X(\Delta))$ is actually the quotient of a polynomial ring in $m-d$ variables by an ideal, where $m-d$ agrees with the rank of $H^2(X(\Delta))$ because $k\ge 2$ in (i). 
\end{rem}

\subsection{Diffeomorphism lemma}

In this subsection, we give a sufficient condition for toric manifolds to be homeomorphic or diffeomorphic. By definition, a toric manifold $X(\Delta)$ of complex dimension $d$ has an algebraic action of $(\CC^*)^d$. Let $S^1$ be the unit circle group of $\CC$. It is known that the orbit space $Q$ of $X(\Delta)$ by the restricted action of $(S^1)^d$ is a $d$-dimensional manifold with corners such that all faces (even $Q$ itself) are contractible. The dual face poset of $Q$ defines a simplicial complex which agrees with the simplicial complex $\mathcal{K}(\Delta)$, to be more precise, if $Q_1,\dots,Q_m$ are the facets of $Q$, then $Q_I:=\bigcap_{i\in I}Q_i\not=\emptyset$ for $I\subset [m]$ if and only if $I\in \mathcal{K}(\Delta)$. The orbit space $Q$ is often a simple polytope. In fact, this is the case when $X(\Delta)$ is projective, more generally when $\mathcal{K}(\Delta)$ is the boundary complex of a simplicial polytope. 

Remember that $\mathcal{V}(\Delta)=\{v_1,\dots,v_m\}$ is the set of primitive ray vectors in $\Delta$. Since $N=\Hom(\CC^*,(\CC^*)^d)=\Hom(S^1,(S^1)^d)$, we may regard $v_i\in N$ as a homomorphism from $S^1$ to $(S^1)^d$. We note that $Q$ is the disjoint union of the interior part $\Int Q_I$ of $Q_I$ over $I\in \mathcal{K}(\Delta)$, where we allow $I=\emptyset$ and $Q_{\emptyset}=Q$. We consider the quotient space 
\begin{equation} \label{eq:canomodel}
X(Q,\mathcal{V}(\Delta)):=Q\times (S^1)^d/\!\sim
\end{equation}
where $(x,g)\sim (y,h)$ if and only if $x=y\in \Int Q_I$ and $gh^{-1}$ belongs to the subtorus generated by circle subgroups $v_i(S^1)$ of $(S^1)^d$ for $i\in I$. The space $X(Q,\mathcal{V}(\Delta))$ is called the \emph{canonical model} of $X(\Delta)$ because $X(Q,\mathcal{V}(\Delta))$ is homeomorphic to $X(\Delta)$ (\cite{DJ91}, \cite[Chapter 7]{ToricTopology}). 

\begin{lem} \label{diffeolemma}
Let $\Delta$ and $\Delta'$ be complete nonsingular fans with the same underlying simplicial complex, i.e. $\mathcal{K}(\Delta)=\mathcal{K}(\Delta')$. If the sets of primitive ray vectors $\mathcal{V}(\Delta)=\{v_1,\dots,v_m\}$ and $\mathcal{V}(\Delta')=\{v_1',\dots,v_m'\}$ agree up to sign (which means $v_i=\pm v_i'$ for each $i=1,\dots,m)$, then $X(\Delta)$ and $X(\Delta')$ are homeomorphic. Moreover, if $\mathcal{K}(\Delta)=\mathcal{K}(\Delta')$ is the boundary complex of a simplicial polytope, then \lq\lq homeomorphic\rq\rq\ above can be improved to \lq\lq diffeomorphic\rq\rq. 
\end{lem}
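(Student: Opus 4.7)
The plan is to exploit the canonical model description in \eqref{eq:canomodel} together with the elementary observation that for any homomorphism $v\in N=\Hom(S^1,(S^1)^d)$, the circle subgroup $v(S^1)\subset (S^1)^d$ coincides with $(-v)(S^1)$, since inverting a homomorphism reverses its parametrization but preserves its image. Consequently, for every $I\in\mathcal{K}(\Delta)=\mathcal{K}(\Delta')$, the subtorus of $(S^1)^d$ generated by $\{v_i(S^1):i\in I\}$ equals the one generated by $\{v_i'(S^1):i\in I\}$.

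To assemble the proof, I would first fix a $d$-dimensional manifold with corners $Q$ whose dual face poset is $\mathcal{K}:=\mathcal{K}(\Delta)=\mathcal{K}(\Delta')$; the orbit space of $X(\Delta)$ by the restricted $(S^1)^d$-action supplies one. Next I would write out both canonical models $X(Q,\mathcal{V}(\Delta))$ and $X(Q,\mathcal{V}(\Delta'))$ via \eqref{eq:canomodel} using this common $Q$. By the observation above, the two equivalence relations on $Q\times(S^1)^d$ are \emph{identical}, so the two canonical models coincide as topological spaces on the nose. Since each canonical model is homeomorphic to its toric manifold, this yields
\[
X(\Delta)\ \cong\ X(Q,\mathcal{V}(\Delta))\ =\ X(Q,\mathcal{V}(\Delta'))\ \cong\ X(\Delta'),
\]
which proves the homeomorphism statement.

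For the smooth refinement, when $\mathcal{K}$ is the boundary complex of a simplicial polytope, I would take $Q$ to be the polar dual simple polytope, and the Davis--Januszkiewicz construction \cite{DJ91} then endows the canonical model with a natural smooth structure for which both $X(\Delta)\cong X(Q,\mathcal{V}(\Delta))$ and $X(\Delta')\cong X(Q,\mathcal{V}(\Delta'))$ become diffeomorphisms. Since this smooth structure is built purely from $Q$ and the subtori $v_i(S^1)=v_i'(S^1)$, the middle equality in the display is an equality of smooth manifolds, and the composition is a diffeomorphism. There is no real obstacle in this argument: the whole content sits in the simple algebraic fact $v(S^1)=(-v)(S^1)$ and the fact that \eqref{eq:canomodel} depends only on these subtori, not on their chosen generators.
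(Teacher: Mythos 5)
Your proof is correct and follows essentially the same route as the paper: both identify $X(\Delta)$ and $X(\Delta')$ with canonical models \eqref{eq:canomodel} over a common orbit space $Q$ and observe that the defining equivalence relation depends only on the circle subgroups $v_i(S^1)=(-v_i)(S^1)$, so the two models coincide, with the smooth case handled by the natural smooth structure over a simple polytope. The one caveat is that the smoothness statement you attribute to \cite{DJ91} --- that this smooth structure makes both identifications $X(\Delta)\cong X(Q,\mathcal{V}(\Delta))$ and $X(\Delta')\cong X(Q,\mathcal{V}(\Delta'))$ diffeomorphisms --- is really the content of \cite[Proposition 6.4 (iii)]{Davis14}, which is what the paper cites, rather than of \cite{DJ91}.
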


\begin{proof}
As remarked above, $X(\Delta)$ is homeomorphic to the canonical model $X(Q,\mathcal{V}(\Delta))$. We see from the construction that the canonical model does not depend on the signs of $v_i$'s, so the former statement follows. If $\mathcal{K}(\Delta)=\mathcal{K}(\Delta')$ is the boundary complex of a simplicial polytope, then the orbit space $Q$ is a simple polytope; so the latter statement follows from \cite[Proposition 6.4 (iii)]{Davis14}. 
\end{proof}

\begin{rem} \label{rem:diffeolemma}
(1) The multiplication by $(S^1)^d$ on the second factor of $X(Q,\mathcal{V}(\Delta))$ descends to an action of $(S^1)^d$ on $X(Q,\mathcal{V}(\Delta))$ and Lemma~\ref{diffeolemma} holds in this equivariant setting, see \cite[Chapter 7]{ToricTopology}. 

(2) If two complete nonsingular fans are unimodularly equivalent, then the associated two toric manifolds are not only isomorphic but also weakly equivariantly isomorphic with respect to the actions of $(\CC^*)^d$ as is known. 

(3) It is proved in the paper \cite{Mas08} by the third author that if the equivariant cohomology rings of two toric manifolds $X$ and $X'$ are isomorphic as algebras over $H^*(BT)$, where $T$ is the $\CC^*$-torus acting on $X$ and $X'$, then $v_i=\pm v_i'$ (i.e. the condition in Lemma~\ref{diffeolemma} is satisfied). After that, the author claimed that it implies that $X$ and $X'$ are isomorphic as varieties but this is incorrect. The mistake occurs at the very end of the proof of Theorem 1 in \cite{Mas08} and the author thanks Hiraku Abe for pointing out the mistake. Indeed, we will see in this paper that there are toric Fano manifolds which satisfy the condition in Lemma~\ref{diffeolemma} but they are not isomorphic as varieties. The author also thanks Hiroshi Sato for providing such an example. In order to correct the theorem, we need to take equivariant first Chern class into account.  Namely, if the equivariant cohomology algebra isomorphism between $X$ and $X'$ preserves their equivariant first Chern classes (those are $\sum_{i=1}^m\tau_i$ and $\sum_{i=1}^m\tau_i'$ in \cite{Mas08}), then we can  conclude $v_i=v_i'$ for every $i$ so that $X$ and $X'$ are isomorphic as varieties. 
\end{rem}

\subsection{Invariants of a cohomology ring}

Let $\Delta$ be a complete nonsingular fan of dimension $d$. It is well-known that the number of $i$-dimensional cones in $\Delta$ coincides with the $2i$-th Betti number of the toric manifold $X(\Delta)$ for $0\le i \le d-1$. Therefore, when $\Delta$ is the spanning fan of a simplicial $d$-polytope $P$, the face numbers of $P$ are cohomology invariants. Namely, if toric manifolds associated with simplicial polytopes $P$ and $P'$ have isomorphic cohomology rings, then the face numbers of $P$ and $P'$ coincide. Especially, the number of vertices of $P$ and the number of facets of $P$ are cohomology invariants. We will use this fact without mentioning it throughout this paper. 

Betti numbers are invariants of a cohomology ring but they depend only on its additive structure. We now introduce invariants of a cohomology ring, which depend on its ring structure.

\begin{defi}[$s.v.e.$, $c.v.e.$, $k\text{-}v.e.$]
Let $k\ge 2$ and $R=\ZZ$ or $\ZZ/p$ where $p$ is a prime number. We say that a nonzero element of $H^2(X)\otimes R$ is \emph{ $k$-v.e.} over $R$ if it is primitive and its $k$-th power vanishes in $H^*(X)\otimes R$. 
When $R=\ZZ$, the word \lq\lq over $\ZZ$\rq\rq\ will be omitted. Also, $2$-v.e. will be called \emph{s.v.e.} (square vanishing element) and $3$-v.e. will be called \emph{c.v.e.} (cube vanishing element). 
\end{defi}

\begin{defi}[\textit{maximal basis number}]
Let $V$ be the set of all s.v.e. of $H^*(X)$ and we consider 
\[
\mathcal{B} := \{ S \subset V \mid \text{$S$ is a part of a $\ZZ$-basis of $H^2(X)$} \}.
\]
Clearly there exists an $S_{\rm max} \in \mathcal{B}$ such that
$$
|S| \leq |S_{\rm max}| \quad\text{for $\forall S\in \mathcal{B}$}. 
$$
We call $|S_{\rm max}|$ the \textit{maximal basis number} of $H^*(X)$. 
\end{defi}

The number of $k$-v.e. over $R$ and the maximal basis number are invariants of $H^*(X)$. Note that there may exist infinitely many s.v.e. although the maximal basis number is finite. 

\begin{ex}
We compute s.v.e. and the maximal basis numbers of $H^*(F_0)$ and $H^*(F_1)$, where $F_a$ is the Hirzebruch surface of degree $a$. As is well-known, we have 
\begin{align*}
H^*(F_0) \cong \ZZ[x,y]/(x^2,y^2)\qquad\text{and}\qquad H^*(F_1) \cong \ZZ[x,y]/(x^2,y(y-x)). 
\end{align*}
Then we easily obtain the following table.
\begin{table}[H]
\centering
\begin{tabular}{|c|c|c|} \hline
& s.v.e. (up to sign) & maximal basis number \\ \hline
$F_0$ & $x,y$ & $2$ \\ \hline
$F_1$ & $x,x-2y$ & $1$ \\ \hline
\end{tabular}
\end{table}
The number of s.v.e. are the same but the maximal basis numbers are different. Thus, $H^*(F_0)$ and $H^*(F_1)$ are not isomorphic to each other.
\end{ex}

\subsection{Gr\"obner basis and normal forms}

For the computation of $k$-v.e. of $H^*(M)$, we recall what Gr\"obner basis is. 
We refer the reader to \cite{Stu} for the introduction to Gr\"obner basis. The terminologies not defined in this section for Gr\"obner basis can be found there. 

Let $S=\kk[x_1,\ldots,x_m]$ be a polynomial ring with $m$ variables over a field $\kk$ 
and let $\calM$ be the set of all monomials in $S$. 
We say that a total order $<$ in $\calM$ a \textit{monomial order} on $S$ if it satisfies that 
\begin{itemize}
\item $1<u$ for any $u \in \calM$ with $u \neq 1$, and 
\item $uw<vw$ holds for any $u,v,w \in \calM$ with $u<v$. 
\end{itemize}
Fix a monomial order $<$ on $S$. Given a polynomial $f \in S$, we call the leading monomial with respect to $<$ appearing in $f$ 
the \textit{initial monomial}, denoted by $\ini_<(f)$. 
Given an ideal $I \subset S$, the ideal generated by the initial monomials $f$ in $I$ is called the \textit{initial ideal} of $I$, 
denoted by $\ini_<(I)$. Namely, $$\ini_<(I)=(\ini_<(f) \mid f \in I).$$
For a system of generator $\{g_1,\ldots,g_s\}$ of an ideal $I$, it is not necessarily the case that $\ini_<(I)=(\ini_<(g_1),\ldots,\ini_<(g_s))$ holds. 
We say that $\{g_1,\ldots,g_s\}$ is a \textit{Gr\"obner basis for $I$ with respect to a monomial order $<$} if this equality holds. 
Even if a system of generator $\calG$ of an ideal is not a Gr\"obner basis for an ideal, there is an algorithm, so called \textit{Buchberger algorithm}, 
to compute its Gr\"obner basis from $\calG$ by appending some additional generators to $\calG$. 

Let $I \subset S$ be an ideal and let $\{g_1,\ldots,g_s\} \subset I$ be a system of generator of $I$. 
For $f \in S$ which is not equal to $0$, we can get an equation
\begin{align*}
f = f_1g_1 + \cdots + f_sg_s + f'
\end{align*}
satisfying the following: 
\begin{itemize}
\item $u \notin ( \text{in}_{<}(g_1), \ldots, \text{in}_{<}(g_s) )$ for all monomials $u$ appearing in $f'$ if $f' \neq 0$, and 
\item $\text{in}_{<}(f_ig_i) \leq \text{in}_{<}(f)$ if $f_i \neq 0$. 
\end{itemize}
We call $f'$ a \textit{normal form} of $f$ and write $\NF(f)$.
It is known that $\NF(f)$ is well-defined if $\{g_1,\ldots,g_s\}$ is a Gr\"obner basis for $I$. 

Once we get a Gr\"obner basis of an ideal $I$, we have many advantages. One of such advantages is the following proposition: 
\begin{prop}\label{prop:membership}
Let $I \subset S$ be an ideal and let $\{g_1,\ldots,g_s\}$ be a Gr\"obner basis for $I$ with respect to a monomial oder $<$. 
Given a polynomial $f \in S$, we have the following: 
$$f \in I \quad \Longleftrightarrow \quad \NF(f)=0.$$
\end{prop}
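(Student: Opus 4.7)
The plan is to prove the two directions separately, with the backward implication being essentially a rewriting of the defining equation of the normal form and the forward implication being the part that actually uses the Gr\"obner basis hypothesis.

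For the direction $\NF(f)=0 \Longrightarrow f \in I$, I would simply invoke the defining equation
\[
f = f_1 g_1 + \cdots + f_s g_s + \NF(f).
\]
If $\NF(f)=0$, then $f = \sum_{i=1}^s f_i g_i$ is manifestly an element of $I$, regardless of whether $\{g_1,\dots,g_s\}$ is a Gr\"obner basis. (I would note here that the well-definedness of $\NF(f)$ requires the Gr\"obner basis hypothesis, but that is already used in the very formulation of the statement.)

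For the harder direction $f \in I \Longrightarrow \NF(f)=0$, I would argue by contradiction: assume $f \in I$ but $\NF(f) \neq 0$. Set $f' := \NF(f)$. Since $f' = f - \sum_{i=1}^s f_i g_i$ and both $f$ and each $f_i g_i$ lie in $I$, we get $f' \in I$. In particular, its initial monomial satisfies $\ini_<(f') \in \ini_<(I)$. Now I would use the Gr\"obner basis hypothesis, namely $\ini_<(I) = (\ini_<(g_1), \dots, \ini_<(g_s))$, to conclude that $\ini_<(f')$ is divisible by $\ini_<(g_i)$ for some $i$. But $\ini_<(f')$ is one of the monomials appearing in $f'$, which by the very definition of normal form is required to satisfy $\ini_<(f') \notin (\ini_<(g_1), \dots, \ini_<(g_s))$. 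This contradiction forces $\NF(f) = 0$.

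There is no real obstacle here; the only subtlety is that the direction $f \in I \Longrightarrow \NF(f)=0$ fails without the Gr\"obner basis hypothesis, and the proof makes clear exactly where that hypothesis enters (in the identification of $\ini_<(I)$ with the ideal generated by the $\ini_<(g_i)$). I would structure the write-up so that this point is visible, since it is the content of the proposition.
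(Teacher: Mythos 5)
Your proof is correct. The paper itself offers no proof of this proposition---it is quoted as a standard fact from Gr\"obner basis theory, with \cite{Stu} as the background reference---and your argument is precisely the standard one: the backward direction follows from the defining decomposition $f=\sum_i f_ig_i+\NF(f)$, and the forward direction from the observation that a nonzero $\NF(f)$ would lie in $I$, so that $\ini_<(\NF(f))\in\ini_<(I)=(\ini_<(g_1),\ldots,\ini_<(g_s))$, contradicting the requirement that no monomial of a nonzero normal form lies in that monomial ideal. (Minor remark: the divisibility step is not even needed; membership of $\ini_<(\NF(f))$ in the monomial ideal already contradicts the definition of normal form directly.)
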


\begin{ex}
Let $X=X_{12}$ be the toric Fano manifold corresponding to ID number 12, which will appear in Section 4, 
and let $P$ be the associated smooth Fano $3$-polytope. Then $P$ has six vertices. 

Let us consider the cohomology ring $H^*(X)$, and demonstrate how to compute s.v.e. of this ring. 
According to the database, the vertices of $P$ are as follows: 
\begin{align*}
v_1=(1,0,0), \; v_2=(0,1,0), \; v_3=(0,0,1), \; v_4=(-1,0,1), \; v_5=(0,1,-1), \; v_6=(0,-1,0). 
\end{align*}

1) First, we compute the defining ideal $\calI$ of the cohomology ring. By Proposition~\ref{prop:compute}, we can compute $\calI$ as follows: 
\begin{align*}
H^*(X) &\cong \ZZ[x_1,\ldots,x_6]/(I_X+J_X), \text{ where }\\
I_X&=(x_1x_4,x_2x_6,x_3x_5) \text{ and } J_X=(x_1-x_4, x_2+x_5-x_6, x_3+x_4-x_5). 
\end{align*}
Note that $I_X$ (resp. $J_X$) corresponds to (i) (resp. (ii)). So, we obtain that 
\begin{align*}
H^*(X) &\cong \ZZ[x_1,\ldots,x_6]/((x_1x_4,x_2x_6,x_3x_5)+(x_1-x_4, x_2+x_5-x_6, x_3+x_4-x_5)) \\
&\cong \ZZ[x,y,z]/(x^2,(z-y)z,(y-x)y). 
\end{align*}
Note that we apply the change of variables $x_4=x$, $x_5=y$, $x_6=z$, and $x_1=x$, $x_2=z-y$ and $x_3=y-x$. 

2) Next, we compute a Gr\"obner basis for the ideal $\calI=(x^2,z(z-y),y(y-x))$. 
As a monomial oder, let $<$ be the graded lexicographic order induced by $x<y<z$. 
Then we can see that $\{x^2,z(z-y), y(y-x)\}$ is a Gr\"obner basis for $\calI$ with respect to $<$. 

3) Finally, we compute s.v.e. of $H^*(X)$. Let $f_1=x^2,f_2=z^2-yz$ and $f_3=y^2-xy$. 

Let $f=ax+by+cz$. Then the normal form of $f^2$ can be computed as follows: 
\begin{align*}
f^2&=a^2x^2+b^2y^2+c^2z^2+2abxy+2acxz+2bcyz \\
&=a^2f_1+c^2f_2+b^2f_3+(2a+b)bxy+2acxz+(2b+c)cyz. 
\end{align*}
Hence, $\NF(f^2)=(2a+b)bxy+2acxz+(2b+c)cyz$. Thus, we can see that $\NF(f^2)=0$ if and only if 
\begin{align*}
\begin{cases}
(2a+b)b=0, \\
2ac=0, \\
(2b+c)c=0. 
\end{cases}
\end{align*}
When $a=0$, we have $b=c=0$. When $a \neq 0$, we have $c=0$ and $(2a+b)b=0$, i.e., $b=0$ or $b=-2a$. Hence, we conclude that 
$\NF(f^2)=0$ if and only if $f=ax$ or $f=ax-2ay$. Therefore, s.v.e. of $H^*(X)$ are $x$ and $x-2y$, 
and its maximal basis number is $1$ since $x$ and $x-2y$ cannot form a $\ZZ$-basis. 

\end{ex}

\begin{ex}
Let us consider the cohomology ring $H^*(X_{24})$, which will appear in Section 5, and demonstrate how to compute s.v.e. of this ring. 
Let $X=X_{24}$ and let $P$ be the associated smooth Fano $4$-polytope. Then $P$ has seven vertices. 
According to the database, the vertices of $P$ are as follows: 
\begin{align*}
&v_1=(1,0,0,0), \;\; v_2=(0,1,0,0), \;\; v_3=(0,0,1,0), \;\; v_4=(0,0,0,1), \\
&v_5=(-1,-1,-1,3), \;\; v_6=(0,0,1,-1), \;\; v_7=(0,0,0,-1). 
\end{align*}

1) First, we compute the defining ideal $\calI$ of the cohomology ring. By Proposition~\ref{prop:compute}, we can compute $\calI$ as follows: 
\begin{align*}
H^*(X) &\cong \ZZ[x_1,\ldots,x_8]/(I_X+J_X), \text{ where }\\
I_X&=(x_1x_2x_3x_5,x_3x_7,x_4x_7,x_4x_6,x_1x_2x_5x_6) \text{ and }\\
J_X&=(x_1-x_5,x_2-x_5,x_3-x_5+x_6,x_4+3x_5-x_6-x_7), \\
&\cong \ZZ[x,y,z]/(x^3(x-y),(x-y)z,(-3x+y+z)z,(-3x+y+z)y,x^3y) \\
&=\ZZ[x,y,z]/(x^4,(x-y)z,(-2y+z)z,(-2x+y)y,x^3y). 
\end{align*}
Note that we apply the change of variables $x_5=x$, $x_6=y$, $x_7=z$, and $x_1=x_2=x$, $x_3=x-y$ and $x_4=-3x+y+z$. 

2) Next, we compute a Gr\"obner basis for the ideal $$\calI=(x^4,(x-y)z,(-2y+z)z,(-3x+y+z)y,x^3y).$$ 
As a monomial order, let $<$ be the graded lexicographic order induced by $x<y<z$. 
A Gr\"obner basis for $I$ with respect to a monomial order $<$ is 
\begin{align}\label{eq:example}
\{x^4,(x-y)z,(-2y+z)z,(-2x+y)y,x^3y,x^2z\}. 
\end{align}
Note that the generator itself does not become a Gr\"obner basis since 
\begin{align*}
\calI \ni (x-y)\cdot (x-y)z - z \cdot (-2x+y)y = x^2z \not\in \text{in}_<(\calI), 
\end{align*}
but it follows from Buchberger algorithm that \eqref{eq:example} is a Gr\"obner basis for $\calI$ with respect to $<$. 

3) Finally, we compute s.v.e. of $H^*(X)$. Let $f=ax+by+cz$. Then $\NF(f^2)=a^2x^2 + 2b(a+b)xy + 2c(a+b+c)xz$. 
Therefore, we conclude that ${\mathrm NF}(f^2)=0$ if and only if $a=b=c=0$. This means that $H^*(X)$ has no s.v.e. 
\end{ex}

\section{The case with large Picard number} \label{sec:Picard}

This section is devoted to verifying the case (1) in Theorem~\ref{thm:main}. 
The Picard number of a smooth Fano $d$-fold associated with a smooth Fano $d$-polytope $P$ is the number of vertices of $P$ minus $d$. 
On the other hand, smooth Fano $d$-polytopes are known to have at most $3d$ vertices and those with $3d$, $3d-1$ or $3d-2$ vertices are classified. 
We prepare some terminology to state those results. 

\begin{defi}[\textit{direct sum}] \label{def:directsum}
Let $P \subset \RR^d$ and $Q \subset \RR^e$ be polytopes. Then 
\begin{align*}
P \oplus Q = \text{conv}( P \times \{ {\bf 0}_e \} \cup \{ {\bf 0}_d \} \times Q) \subset \RR^{d+e}, 
\end{align*}
where ${\bf 0}_d$ (resp. ${\bf 0}_e$) denotes the origin of $\RR^d$ (resp. $\RR^e$), is called the \textit{direct sum} of $P$ and $Q$. 
The direct sum is also called the \textit{free sum} of $P$ and $Q$. 
\end{defi}
\begin{defi}[\textit{skew bipyramid}]
Let $P \subset \RR^d$ be a polytope. Then we call a polytope $B \subset \RR^{d+1}$ 
a \textit{skew bipyramid} (or simply \textit{bipyramid}) over $P$ if $P$ is contained in affine hyperplane $H$ such that
there are two vertices $v$ and $w$ of $B$ which lie on either side of $H$ such that
$B = \text{conv}(\{v,w\} \cup P)$ and the line segment $[v,w]$ meets $P$ in its (relative) interior.
\end{defi}

Throughout this section, $e_1,\dots,e_d$ will denote the standard basis of $\ZZ^d$ and $P_6$ (resp. $P_5$) will denote the hexagon (resp. pentagon) with vertices 
\[
\pm e_1,\ \pm e_2,\ \pm (e_1-e_2)\qquad (\text{resp.}\ e_1,\ \pm e_2,\ \pm (e_1-e_2)).
\]
The classification results on smooth Fano $d$-polytopes with $3d$, $3d-1$ or $3d-2$ vertices are as follows. 

\begin{thm}[{Casagrande \cite{Cas06}}]\label{thm:smoothFano3d}
A smooth Fano $d$-polytope $P$ has at most $3d$ vertices. 
If it does have exactly $3d$ vertices, then $d$ is even 
and $P$ is unimodularly equivalent to $P_6^{\oplus \frac{d}{2}}$. 
\end{thm}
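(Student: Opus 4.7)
The plan is to follow Casagrande's \emph{special-facet} method. Let $P\subset\RR^d$ be a smooth Fano $d$-polytope with vertex set $V$. Choose a \emph{special facet} $F$: a facet whose associated cone contains $\xi:=\sum_{v\in V}v$; such $F$ exists because the facet cones of a complete simplicial fan tile $\RR^d$. By the smooth Fano hypothesis the vertices $x_1,\dots,x_d$ of $F$ form a $\ZZ$-basis of $\ZZ^d$, so every $v\in V$ has a unique expression $v=\sum_{i=1}^{d}a_i(v)\,x_i$ with $a_i(v)\in\ZZ$; writing $\xi=\sum_i c_i x_i$, the special-facet choice becomes $c_i=\sum_{v\in V}a_i(v)\ge 0$ for every $i$.

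The core a priori bound, due to Casagrande, is that $a_i(v)\le 1$ for every $v\in V$ and every $i$. It is proved by examining the facets of $P$ adjacent to $F$ across the ridges $\operatorname{conv}\{x_j\}_{j\ne i}$ and exploiting the unimodularity of each such neighboring facet together with the condition that $P$ contains the origin in its interior. Combined with the facet inequality $\sum_i a_i(v)\le 1$ coming from $F$ itself and the special-facet condition $\sum_v a_i(v)\ge 0$, a careful sign-pattern bookkeeping---counting contributions of vertices according to how many coordinates $a_i(v)$ equal $1$, equal $0$ or are negative, and using adjacent-facet unimodularity again to limit how many coordinates of a single non-basis vertex can take extremal values---yields the upper bound $|V|\le 3d$. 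For the equality case $|V|=3d$, all of these inequalities must be saturated, which forces $a_i(v)\in\{-1,0,1\}$ for all $v,i$ and imposes a very rigid distribution of supports. A case analysis, again leaning on unimodularity of several adjacent facets at once, shows that $\{1,\dots,d\}$ partitions into pairs $\{i,j\}$ on each of which the vertices whose nonzero coordinates lie inside $\{i,j\}$ form a hexagon $\{\pm x_i,\pm x_j,\pm(x_i-x_j)\}$, and no vertex mixes coordinates from two distinct pairs. This decoupling is exactly the direct sum decomposition $P\cong P_6^{\oplus d/2}$, and in particular $d$ must be even.

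The main obstacle is the equality step. The bound $|V|\le 3d$ is the softer part of the argument once the key lemma $a_i(v)\le 1$ is available, but upgrading the saturation conditions to the rigid structural statement $P\cong P_6^{\oplus d/2}$ requires ruling out every \lq\lq mixed\rq\rq\ vertex pattern and combining the smooth Fano condition across several facets simultaneously in order to force the coordinates to decouple into hexagonal pairs. In practice this means that after identifying a single hexagonal block one must argue that its six vertices, together with the remaining $3(d-2)$ vertices, exhaust $V$ only if the complementary $(d-2)$-dimensional projection is again a smooth Fano polytope with $3(d-2)$ vertices, so that the classification follows by induction on $d$.
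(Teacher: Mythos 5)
First, a point of comparison: the paper does not prove this theorem at all; it quotes it from Casagrande \cite{Cas06} and uses it as a black box (moreover, the special-facet technique you invoke is really {\O}bro's later simplification, while Casagrande's own proof goes through toric Mori theory and primitive relations). So your proposal has to stand on its own, and it does not: the declared ``core a priori bound,'' that $a_i(v)\le 1$ for every vertex $v$ and every $i$, is false for smooth Fano polytopes. A counterexample sits in this very paper (Table~\ref{tab:VMNF68}, ID 25): take
$P=\mathrm{conv}\{e_1,e_2,e_3,e_4,(-1,-1,-1,3),-e_4\}\subset\RR^4$,
the polytope of the $\CC P^1$-bundle $\CC P(\calO\oplus\calO(3))$ over $\CC P^3$. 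Here $F=\mathrm{conv}\{e_1,e_2,e_3,e_4\}$ is a facet, and $\xi=\sum_{v\in V}v=3e_4$ lies in the cone over $F$, so $F$ is a special facet; yet the vertex $v_5=(-1,-1,-1,3)$ has $a_4(v_5)=3$. In general the coordinates $a_i(v)$ with respect to a special facet can be as large as $d-1$; the only bound that holds a priori is the level bound $\sum_i a_i(v)\le 1$, which you list as a separate, auxiliary inequality.

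Second, even after deleting the false lemma, the counting you sketch cannot reach $|V|\le 3d$. Your two correct ingredients --- $\sum_i a_i(v)\le 1$ for all $v$, and $\sum_{v}a_i(v)\ge 0$ for all $i$ --- show only that exactly $d$ vertices lie at level $1$ (those of $F$) and that at most $d$ vertices lie at negative levels, since the nonnegativity of the total level sum must absorb the $+d$ coming from $F$. They give no control whatsoever over the number of vertices at level $0$; the vertex $(-1,-1,-1,3)$ above is precisely such a vertex. The crux of the special-facet proof is a further lemma, valid for simplicial reflexive polytopes: every vertex $v$ with level $\sum_i a_i(v)=0$ is the vertex opposite some $x_i$ across the ridge $\mathrm{conv}\{x_j\}_{j\ne i}$ of $F$, hence there are at most $d$ such vertices, giving $|V|\le d+d+d$. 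This lemma appears nowhere in your proposal, and no amount of ``sign-pattern bookkeeping'' with your listed inequalities can substitute for it. The equality case (evenness of $d$ and the splitting into hexagons $\{\pm x_i,\pm x_j,\pm(x_i-x_j)\}$) is likewise only asserted; it is the hardest part of the theorem and rests on the level-$0$/opposite-vertex structure you are missing, not on the coordinate bound you propose.
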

\begin{thm}[{$\O$bro \cite{Obro08}}]\label{thm:smoothFano3d-1}
Let $P$ be a smooth Fano $d$-polytope with exactly $3d-1$ vertices. 
If $d$ is even, then $P$ is unimodularly equivalent to $P_5 \oplus P_6^{\oplus \frac{d-2}{2}}$.
If $d$ is odd, then $P$ is unimodularly equivalent to a bipyramid over $P_6^{\oplus \frac{d-1}{2}}$.
\end{thm}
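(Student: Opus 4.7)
The plan is to prove the classification by induction on $d$, using Casagrande's proof of Theorem~\ref{thm:smoothFano3d} as a template and tightening its vertex-bound analysis to accommodate one fewer vertex. I would dispatch the base cases by direct inspection: for $d=2$ we have $3d-1=5$ and the pentagon $P_5$ is the unique smooth Fano surface with five vertices, while for $d=3$ we have $3d-1=8$ and {\O}bro's database contains a single smooth Fano $3$-polytope with eight vertices, which one checks is a bipyramid over $P_6$ (consistent with $\frac{d-1}{2}=1$).

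For the inductive step, the strategy mirrors Casagrande's. Let $P$ be a smooth Fano $d$-polytope with $3d-1$ vertices, pick a ``special'' facet $F$ with inner normal $u_F$, and stratify the vertex set by the value $\langle u_F,v\rangle$, which smoothness forces to be an integer at most $1$. Casagrande's argument shows that when $|V(P)|=3d$ every vertex at height $-1$ pairs with a vertex at height $1$, and these $d$ antipodal pairs assemble into $\frac{d}{2}$ orthogonal hexagons, whence $P\cong P_6^{\oplus \frac{d}{2}}$. When $|V(P)|=3d-1$, exactly one such pair is broken, and I would track how this single deficiency propagates through Casagrande's combinatorics.

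The main argument then splits on parity. If $d$ is even, I would show that there is a rank-$(d-2)$ sublattice $N'\subset \ZZ^d$ on which $P$ restricts to a smooth Fano $(d-2)$-polytope $P'$ with $3(d-2)$ vertices, while the image of $P$ in the rank-$2$ quotient is $P_5$; Theorem~\ref{thm:smoothFano3d} applied to $P'$ then gives $P'\cong P_6^{\oplus \frac{d-2}{2}}$ and hence the decomposition $P\cong P_5\oplus P_6^{\oplus \frac{d-2}{2}}$. If $d$ is odd, no $P_5$-summand is possible because the $P_6$-blocks account for an even-dimensional sublattice, so I would instead isolate a pair of antipodal vertices $\pm e_d$ in suitable coordinates with all remaining vertices orthogonal to $e_d$, exhibiting $P$ as a bipyramid over a smooth Fano $(d-1)$-polytope with $3(d-1)$ vertices, which by Theorem~\ref{thm:smoothFano3d} must be $P_6^{\oplus \frac{d-1}{2}}$.

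The hard part is the lattice-combinatorial step that actually extracts the $P_5$-summand (or identifies the bipyramid axis) from the raw vertex count. One must show that the single missing antipodal pair forces a \emph{global} direct-sum or bipyramid structure rather than merely a local perturbation of the maximal configuration, and this requires repeated use of the smoothness condition---that every facet is a $\ZZ$-basis of $\ZZ^d$---together with sharp inequalities on the heights $\langle u_F,v\rangle$ to rule out configurations that match the vertex count but fail to decompose.
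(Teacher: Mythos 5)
There is a genuine gap, and it is worth noting first that the paper itself contains no proof of this statement: Theorem~\ref{thm:smoothFano3d-1} is quoted from {\O}bro \cite{Obro08} and used as an input to the classification, so your proposal amounts to reproving {\O}bro's theorem. As written, it is a plan rather than a proof. The entire mathematical content---showing that a vertex count of $3d-1$ forces a \emph{global} splitting of the vertex set into a $P_5$-block plus pairwise orthogonal hexagons (for $d$ even), or a bipyramid axis plus orthogonal hexagons (for $d$ odd)---is precisely what you set aside as ``the hard part,'' and nothing in the sketch substitutes for it: that step is the substance of {\O}bro's paper (and of the analogous $3d-2$ classification of Assarf--Joswig--Paffenholz \cite{AJP}). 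Concretely, your even-case claim that there is a rank-$(d-2)$ sublattice carrying a smooth Fano $(d-2)$-polytope with $3(d-2)$ vertices, and your odd-case claim that two vertices form a bipyramid axis over a coordinate hyperplane, are asserted rather than derived; the statement that with one fewer vertex ``exactly one antipodal pair is broken'' does not follow from the count alone without redoing the height inequalities at a sharper level (other height distributions must be excluded); and the parity argument that no $P_5$-summand can occur for odd $d$ because hexagons fill an even-rank sublattice is circular, since it presupposes the block decomposition being proved.

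There is also a factual error in your base case. For $d=3$ there are \emph{two} smooth Fano $3$-polytopes with eight vertices, not one: in the paper's notation these are $Y_1^3$ and $Y_2^3$ (ID numbers 15 and 9), the symmetric and the skew bipyramid over $P_6$, and they are not unimodularly equivalent (indeed the paper distinguishes even their cohomology rings). Both are bipyramids over $P_6$, so the theorem's conclusion holds, but the uniqueness you claim to have checked is false. This matters for your inductive framework: for odd $d$ the classification output is a family (the apices of the skew bipyramid can be placed over a vertex of the base, not only antipodally), so any induction or reduction to Theorem~\ref{thm:smoothFano3d} must carry this ambiguity along, and your sketch does not account for it.
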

\begin{thm}[{Assarf-Joswig-Paffenholz \cite{AJP}}]\label{thm:smoothFano3d-2}
Let $P$ be a smooth Fano $d$-polytope with exactly $3d-2$ vertices. 
If $d$ is even, then $P$ is unimodularly equivalent to 
\begin{center}
$
\begin{cases}
(1) ~~ \text{a double bipyramid over} ~P_6^{\oplus \frac{d-2}{2}} &\text{or} \\
(2) ~~ P_5 \oplus P_5 \oplus P_6^{\oplus \frac{d-4}{2}} &\text{or} \\
(3) ~~ \text{DP}(4) \oplus P_6^{\oplus \frac{d-4}{2}}, \\
\end{cases}
$
\end{center}
where $DP(4)$ is the convex hull of $10$ vertices 
$ \pm e_1,\ \pm e_2,\ \pm e_3,\ \pm e_4,\ \pm(e_1+e_2+e_3+e_4)$ in $\RR^4$. 
If $d$ is odd, then $P$ is unimodularly equivalent to a bipyramid over $P_5 \oplus P_6^{\oplus \frac{d-3}{2}}$.
\end{thm}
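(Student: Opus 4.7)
The plan is to extend the inductive approach underlying Theorems~\ref{thm:smoothFano3d} and~\ref{thm:smoothFano3d-1} one step further, treating the $3d-2$ vertex count as a controlled perturbation of the fully antipodal $3d$ vertex case. Introduce the \emph{antipodal partner count} $a(P) = \#\{v \in V(P) : -v \in V(P)\}$ and let $u(P) = |V(P)| - a(P)$ denote the number of unpaired vertices. A counting argument on facets (each facet is a simplex whose vertex set is a $\ZZ$-basis of $\ZZ^d$, and $-v$ cannot lie in any facet containing $v$) forces $a(P)$ to be large whenever $|V(P)|$ is close to $3d$. More precisely, one should first establish that if $|V(P)| = 3d-2$, then $u(P) \in \{0,1,2\}$, with the parity of $u(P)$ matching that of $d$.

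Next, split into cases along parity. When $d$ is odd, $u(P)$ must be odd, hence $u(P) = 1$: the unique unpaired vertex $v$ should be shown to be the apex of a skew bipyramid whose base $P'$ is a smooth Fano $(d-1)$-polytope with $3(d-1)-1$ vertices. Applying Theorem~\ref{thm:smoothFano3d-1} to the even-dimensional polytope $P'$ yields $P' \cong P_5 \oplus P_6^{\oplus (d-3)/2}$, giving the stated bipyramid. When $d$ is even and $u(P) = 2$, an iteration of the same bipyramid reduction (handling both unpaired vertices, possibly as a genuine double bipyramid) reduces to a centrally symmetric base polytope of dimension $d-2$ with $3(d-2)$ vertices, which by Theorem~\ref{thm:smoothFano3d} is $P_6^{\oplus (d-2)/2}$; this gives case (1).

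The hard case is $d$ even with $u(P) = 0$, i.e.\ $P$ centrally symmetric with $3d-2$ vertices. Here one seeks a direct-sum decomposition. A standard technique is to search for a hyperplane $H$ through the origin such that $V(P) \cap H$ and $V(P) \setminus H$ each span proper sublattices summing to $\ZZ^d$; the existence of such an $H$ follows from analyzing primitive relations among vertex pairs $\{v,-v\}$. When such a splitting exists, induction on the direct summands yields cases (2) $P_5 \oplus P_5 \oplus P_6^{\oplus (d-4)/2}$; the summands can only be of the listed types because $P_5$ and $P_6$ are the only centrally symmetric smooth Fano $2$-polytopes contributing, and $P_5$ is not centrally symmetric but may be paired. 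When no such splitting exists, $P$ is indecomposable and centrally symmetric; showing $DP(4) \oplus P_6^{\oplus (d-4)/2}$ is the only possibility reduces, via induction on $d$, to classifying indecomposable centrally symmetric smooth Fano $4$-polytopes with $10$ vertices and proving $DP(4)$ is the unique such example.

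The main obstacle is precisely this last step: isolating $DP(4)$ as the unique indecomposable centrally symmetric smooth Fano $4$-polytope with $10$ vertices. Verifying that $DP(4)$ itself is smooth Fano (each of its facets is spanned by a $\ZZ$-basis of $\ZZ^4$) is a finite check, but proving uniqueness requires enumerating the possible primitive linear relations among five antipodal vertex pairs in $\ZZ^4$, ruling out all candidates other than $DP(4)$ via the smoothness condition on facets. Once this indecomposable case is pinned down, the induction on $d$ for cases (2) and (3) proceeds by peeling off $P_6$ summands using the splitting hyperplane above, and the odd-dimensional case is comparatively routine given Theorem~\ref{thm:smoothFano3d-1}.
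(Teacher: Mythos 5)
First, a point of context: the paper does not prove this theorem at all — it is quoted verbatim from Assarf--Joswig--Paffenholz \cite{AJP}, whose proof occupies a full research paper built on {\O}bro's special-facet and lattice-width techniques. So your proposal must stand on its own, and it does not: its foundational lemma is false. You claim that $|V(P)|=3d-2$ forces the number $u(P)$ of vertices without antipodal partner to lie in $\{0,1,2\}$. Counterexamples are supplied by the very polytopes being classified. For $d=3$, the skew bipyramid $Z_2$ of Section~\ref{sec:Picard}, with vertices $e_1,\ -e_1+e_2,\ e_2,\ \pm e_3,\ \pm(e_2-e_3)$, has $3d-2=7$ vertices but $u(P)=3$, since none of $e_1$, $-e_1+e_2$, $e_2$ has its negative among the vertices. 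For $d=4$, the double skew bipyramid $W_3$, with vertices $e_1,\ -e_1+e_2,\ e_2,\ -e_2+e_3,\ \pm e_3,\ \pm e_4,\ \pm(e_3-e_4)$, has $u(P)=4$. Your facet-counting heuristic fails precisely because skew bipyramid apices need not be antipodal and because $P_5$ itself contains an unpaired vertex; the only part of the lemma that survives is the parity statement $u(P)\equiv d \pmod 2$, which is trivial.

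Since the stratification by $u(P)$ is wrong, the case analysis built on it collapses. For $d$ odd, $u(P)=1$ holds for $Z_1$ but not for $Z_2$, $Z_3$, $Z_4$ (all have $u=3$), so the reduction to Theorem~\ref{thm:smoothFano3d-1} never gets started. For $d$ even, case (1) of the theorem realizes $u\in\{0,2,4\}$ (namely $W_1$, $W_2$, $W_3$), while case (2), $P_5\oplus P_5\oplus P_6^{\oplus\frac{d-4}{2}}$, has $u=2$; so your implication \lq\lq $u=2$ $\Rightarrow$ double bipyramid over $P_6^{\oplus\frac{d-2}{2}}$\rq\rq\ is false (the two are combinatorially distinct by Remark~\ref{rem:facetnumber}), and your $u=0$ stratum wrongly conflates the centrally symmetric double bipyramids such as $W_1$ (which split as sums of two segments and $P_6$'s, not of $P_5$'s) with $\mathrm{DP}(4)\oplus P_6^{\oplus\frac{d-4}{2}}$. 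The discussion of the centrally symmetric case is also internally inconsistent: you invoke $P_5$ summands while noting $P_5$ is not centrally symmetric. Finally, even the step you correctly identify as hard — showing $\mathrm{DP}(4)$ is the unique indecomposable centrally symmetric smooth Fano $4$-polytope with $10$ vertices — is deferred to an unexecuted enumeration, so no part of the argument is actually carried out.
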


\begin{rem} \label{rem:facetnumber}
The polytopes in (1), (2), (3) in Theorem~\ref{thm:smoothFano3d-2} have different face numbers. 
Indeed, their facet numbers are respectively $24\cdot 6^{\frac{d-4}{2}}$, $25\cdot 6^{\frac{d-4}{2}}$, $30\cdot 6^{\frac{d-4}{2}}$. 
\end{rem}

\subsection{Cohomology of toric Fano $2$-folds associated to $P_6$ and $P_5$} 
Let $X_{P_6}$ be the toric Fano $2$-fold associated to $P_6$. We number the vertices of $P_6$ 
\[
e_1,\ e_2,\ -e_1+e_2,\ -e_1,\ -e_2,\ e_1-e_2 
\]
from $1$ to $6$ and denote the corresponding elements in $H^2(X_{P_6})$ by $\mu_1,\dots,\mu_6$. Then we have 
\begin{align} \label{eq:HP6}
&H^*(X_{P_6}) \nonumber\\
&\cong \mathbb{Z}[\mu_1,\mu_2,\mu_3,\mu_4,\mu_5,\mu_6] / ( (\mu_1\mu_3, \mu_1\mu_4, \mu_1\mu_5, \mu_2\mu_4, \mu_2\mu_5, \mu_2\mu_6, \mu_3\mu_5, \mu_3\mu_6, \mu_4\mu_6) \nonumber\\
&\hspace{5.5cm} +(\mu_1-\mu_3-\mu_4+\mu_6, \mu_2+\mu_3-\mu_5-\mu_6)) \nonumber\\
&\cong \mathbb{Z}[x,y,z,w]/(x(x+y),y(x+y),z(y-w),y(x-z),z(z+w),w(z+w),xz,xw,yw),
\end{align}
where $x=\mu_3$, $y=\mu_4$, $z=\mu_5$ and $w=\mu_6$. 

\begin{lem}\label{lem:MBN_P6_1}
The maximal basis number of $\RPsix$ is $3$. 
\end{lem}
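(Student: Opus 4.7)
The plan is to first reduce the multiplication on $H^*(X_{P_6})$ to a very simple table, then express the squaring map on $H^2$ as a single integer quadratic form $Q$, and finally use a reduction modulo $2$ to bound the maximal basis number above while exhibiting an explicit triple of s.v.e.\ for the lower bound.

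From the nine relations in~(\ref{eq:HP6}) I would first extract $x^2 = y^2 = -xy$, $z^2 = w^2 = -zw$, $xy = yz = zw$, and $xz = xw = yw = 0$. Writing $t := xy = yz = zw$, every quadratic monomial in $x,y,z,w$ then equals $0$ or $\pm t$, and since $H^4(X_{P_6}) \cong \ZZ$ with $t$ non-zero (for instance, $y^2 = -t$ and $y$ is a non-torsion degree-$2$ class), $t$ generates $H^4$. A direct expansion now gives
$$f^2 = -\bigl(a^2 + b^2 + c^2 + d^2 - 2ab - 2bc - 2cd\bigr)\, t$$
for $f = ax+by+cz+dw$, so $f$ is s.v.e.\ if and only if
$$Q(a,b,c,d) := a^2 + b^2 + c^2 + d^2 - 2ab - 2bc - 2cd = 0.$$

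The key observation is that, reducing modulo $2$ and using $n^2 \equiv n \pmod 2$, one has $Q(a,b,c,d) \equiv a+b+c+d \pmod 2$. Hence every s.v.e.\ of $H^*(X_{P_6})$ lies in the index-two sublattice $L = \{(a,b,c,d) \in \ZZ^4 : a+b+c+d \text{ is even}\}$. Any four s.v.e.\ therefore generate a sublattice of $\ZZ^4$ contained in $L$, so the determinant of the matrix whose columns are their coefficient vectors is divisible by $[\ZZ^4 : L] = 2$ and in particular is not $\pm 1$. This forces the maximal basis number to be at most $3$.

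For the matching lower bound I observe that $Q$ vanishes on $(1,1,0,0)$, $(0,1,1,0)$ and $(0,0,1,1)$, so $x+y$, $y+z$ and $z+w$ are s.v.e.; together with $x$ they form a $\ZZ$-basis of $H^2(X_{P_6})$ (the corresponding $4 \times 4$ integer matrix has determinant $\pm 1$), giving the maximal basis number equal to $3$. The extraction of the quadratic form $Q$ is really the only substantive step: once $Q$ is in hand, the mod-$2$ trick delivers the upper bound immediately, and I do not foresee any subtle obstacle.
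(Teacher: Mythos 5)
Your proof is correct and takes essentially the same route as the paper: both reduce squaring on $H^2(X_{P_6})$ to the quadratic form $Q(a,b,c,d)=a^2+b^2+c^2+d^2-2ab-2bc-2cd$, exploit its mod-$2$ reduction $a+b+c+d$ for the upper bound, and exhibit the same three elements $x+y$, $y+z$, $z+w$ (part of a $\ZZ$-basis) for the lower bound. The only cosmetic difference is that you express the upper bound via the index-two sublattice and determinant divisibility, whereas the paper states it as a dimension count for s.v.e.\ over $\ZZ/2$.
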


\begin{proof}
It follows from \eqref{eq:HP6} that any element of $H^2(X_{P_6})$ is of the form $ax+by+cz+dw$ with integers $a,b,c,d$ and an elementary computation shows that 
$$ (ax+by+cz+dw)^2=(a^2-2ab+b^2-2bc+c^2-2cd+d^2)w^2. $$
Therefore, the s.v.e of $\RPsix$ are primitive elements in the set 
\begin{equation} \label{eq:sveP6}
\{ ax+by+cz+dw \mid (a-b)^2+(c-d)^2=2bc \}.
\end{equation}
In particular, $x+y$, $y+z$ and $z+w$ are s.v.e. and since they form a part of a $\ZZ$-basis of $H^2(X_{P_6})$, the maximal basis number of $\RPsix$ is at least $3$. 

On the other hand, it follows from \eqref{eq:sveP6} that s.v.e. of $\RPsix$ over $\ZZ/2$ are given by 
\[
\{ ax+by+cz+dw \mid a+b+c+d=0\},
\]
where $a,b,c,d$ are regarded as elements of $\ZZ/2$. Therefore, the dimension of $H^2(X_{P_6})\otimes\ZZ/2$ is $3$, which implies that the maximal basis number of $\RPsix$ is at most 3, proving the lemma. 
\end{proof}

\begin{lem}\label{lem:MBN_P6_2}
For any s.v.e. $f$ of $\RPsix$, there exist infinitely many s.v.e. of $\RPsix$ 
such that $f$ together with the s.v.e. does not form a part of a $\mathbb{Z}$-basis of $H^2(X_{P_6})$. 
\end{lem}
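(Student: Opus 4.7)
The plan is to produce, for each s.v.e.\ $f$, infinitely many s.v.e.\ $g$ with $g \equiv f \pmod{2}$; any such $g$ that is linearly independent from $f$ then automatically fails to extend $\{f\}$ to a part of a $\ZZ$-basis. Indeed, writing $g = f + 2h$ with $h \in \ZZ^4$, the $2 \times 4$ matrix with rows $f$ and $g$ is equivalent under row operations to the matrix with rows $f$ and $2h$; every $2 \times 2$ minor is therefore divisible by $2$, so $\ZZ f + \ZZ g$ cannot be a direct summand of $H^2(X_{P_6}) \cong \ZZ^4$.

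By the proof of Lemma~\ref{lem:MBN_P6_1}, the s.v.e.\ are precisely the primitive vectors $f = (a,b,c,d) \in \ZZ^4$ in the basis $x,y,z,w$ of \eqref{eq:HP6} satisfying $Q(a,b,c,d) := (a-b)^2 + (c-d)^2 - 2bc = 0$. I will use the involution $\sigma\colon (a,b,c,d) \mapsto (d,c,b,a)$, which preserves $Q$ and is induced by the ring automorphism of $\RPsix$ swapping $x \leftrightarrow w$ and $y \leftrightarrow z$ (quick verification against the defining ideal in \eqref{eq:HP6}). Since the only primitive solutions of $Q=0$ with $b=0$ are $\pm(0,0,1,1)$, and $\sigma$ sends them to $\pm(1,1,0,0)$ with nonzero second coordinate, applying $\sigma$ if necessary lets me assume $b \neq 0$.

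With $b \neq 0$ I will use the one-parameter family
\[
g_\ell := f + 2 h_\ell, \qquad h_\ell := \bigl(b\ell,\ 0,\ b\ell^2 + (a-b)\ell,\ b\ell^2 + (a-b)\ell\bigr) \qquad (\ell \in \ZZ).
\]
A short calculation gives $Q(h_\ell) = b^2 \ell^2$ and, via polarization, $B(f, h_\ell) = -b^2 \ell^2$, so $Q(g_\ell) = Q(f) + 4 B(f, h_\ell) + 4 Q(h_\ell) = 0$; thus each $g_\ell$ is isotropic. Primitivity of $g_\ell$ is immediate: any prime dividing all four components must divide $b$ (second coordinate), hence $a$ (first coordinate $a+2b\ell$), hence $c$ and $d$ (last two coordinates), contradicting primitivity of $f$. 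The first coordinates $a+2b\ell$ are strictly monotonic in $\ell$, so the $g_\ell$ are pairwise distinct; for $\ell \neq 0$ the second coordinate of $h_\ell$ is $0$ while that of $f$ is $b \neq 0$, so $f$ and $g_\ell$ are linearly independent, whence the criterion of the first paragraph applies.

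The principal technical point is producing a family $h_\ell$ for which $Q(g_\ell) = 0$ holds identically in $\ell$ and $g_\ell$ stays primitive. My ansatz $h = (p,0,r,r)$ kills the $(h_3 - h_4)$-contribution to $Q$ and reduces the isotropy condition $Q(h) + B(f,h) = 0$ to the single equation $p^2 + (a-b)p - br = 0$, which is cleared integrally by the substitution $p = b\ell$, $r = b\ell^2 + (a-b)\ell$. Everything else is routine verification, and the case $b = 0$ is isolated enough to be handled by the symmetry $\sigma$.
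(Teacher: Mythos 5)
Your proof is correct, and it rests on the same key obstruction as the paper's: if $g\equiv f \pmod 2$ and $g$ is independent of $f$, then $\{f,g\}$ cannot be part of a $\ZZ$-basis, so it suffices to produce infinitely many s.v.e.\ lying in the mod-$2$ class of $f$. Where you genuinely diverge is in how that infinite family is produced. The paper first classifies which parity patterns $(a,b,c,d)\bmod 2$ an s.v.e.\ can have --- up to the hexagon symmetry $(a,b,c,d)\mapsto (d,c,b,a)$, only $(\mathrm{even},\mathrm{even},\mathrm{odd},\mathrm{odd})$ and $(\mathrm{even},\mathrm{odd},\mathrm{odd},\mathrm{even})$ --- and then writes down one explicit universal family in each pattern, namely $(2k(k+1),\,2k^2,\,1,\,1)$ and $(2(k+1)^2,\,2k(k+1)+1,\,1,\,0)$; any given $f$ then shares its parity with one of these families. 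You instead manufacture a family $g_\ell=f+2h_\ell$ adapted to $f$ itself, using the polarization identity for the quadratic form $Q$ of \eqref{eq:sveP6} to reduce isotropy of $g_\ell$ to the single equation $p^2+(a-b)p-br=0$, which your substitution $p=b\ell$, $r=b\ell^2+(a-b)\ell$ clears identically; primitivity, pairwise distinctness, and independence from $f$ then follow by inspection, and all of these verifications check out. Your route avoids the parity classification entirely (you never need to know which mod-$2$ classes actually occur), at the price of the degenerate case $b=0$, which you handle with the same symmetry the paper's phrase ``up to symmetry'' implicitly invokes. What each approach buys: the paper's is shorter once the parity analysis is granted and its witnesses are maximally concrete, while yours is uniform in $f$ and would apply verbatim to any primitive isotropic vector of $Q$ with nonzero second coordinate, making the mechanism behind the infinitude more transparent.
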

\begin{proof}
As observed in \eqref{eq:sveP6}, any s.v.e. $ax+by+cz+dw$ of $\RPsix$ must satisfy the condition $$(a-b)^2+(c-d)^2=2bc.$$ 
Therefore, the parity of $a-b$ and $c-d$ must be the same. Moreover, at least one of $a,b,c,d$ must be odd
because the s.v.e. $ax+by+cz+dw$ is primitive. In fact, one can see from the identity above that 
the parity of $(a,b,c,d)$ must be one of the following up to symmetry: 
\begin{center}
(i) \;\; (even, \; even, \; odd, \; odd); \quad\quad (ii) \;\; (even, \; odd, \; odd, \; even). 
\end{center}
Since any two elements with the same parity do not form a part of a $\ZZ$-basis of $H^2(X_{P_6})$, 
it suffices to show that there are infinitely many s.v.e. in each of (i) and (ii) above and here are examples of infinitely many s.v.e. in each case: 
\begin{align*}
{\rm (i)} \;\; (2k(k+1),\ 2k^2,\ 1,\ 1), \quad {\rm (ii)} \;\; (2(k+1)^2,\ 2k(k+1)+1,\ 1,\ 0), 
\end{align*}
where $k$ is any integer. 
\end{proof}

Let $X_{P_5}$ be the toric Fano $2$-fold associated to $P_5$. We number the vertices of $P_5$ 
\[
e_1,\ e_2,\ -e_1+e_2,\ -e_2,\ e_1-e_2 
\]
from $1$ to $5$ and denote the corresponding elements in $H^2(X_{P_5})$ by $\mu_1,\dots,\mu_5$. Then we have 
\begin{align} \label{eq:HP5}
H^*(X_{P_5}) \nonumber 
&\cong \mathbb{Z}[\mu_1,\mu_2,\mu_3,\mu_4,\mu_5] / ((\mu_1\mu_3, \mu_1\mu_4, \mu_2\mu_4, \mu_2\mu_5, \mu_3\mu_5) \nonumber \\
&\hspace{4.5cm} +(\mu_1-\mu_3+\mu_5, \mu_2+\mu_3-\mu_4-\mu_5)) \nonumber\\ 
&\cong \mathbb{Z}[x,y,z] / (x^2,y(x-z),y^2,z(y+z),xz),
\end{align}
where $x=\mu_3$, $y=\mu_4$ and $z=\mu_5$. 

\begin{lem}\label{lem:MBN_P5_1}
The maximal basis number of $\RPfive$ is $2$. 
\end{lem}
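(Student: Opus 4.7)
My plan is to follow the same strategy as in the proof of Lemma~\ref{lem:MBN_P6_1}. From the presentation \eqref{eq:HP5}, any element of $H^2(X_{P_5})$ has the form $ax+by+cz$ for integers $a,b,c$, and the relations $x^2=y^2=xz=0$, $xy=yz$, $z^2=-yz$ allow me to rewrite the square as a multiple of $yz$. Indeed, an elementary computation should give
\[
(ax+by+cz)^2 = (2ab + 2bc - c^2)\,yz,
\]
so the s.v.e.\ of $H^*(X_{P_5})$ are exactly the primitive triples $(a,b,c)$ with $c^{2}=2b(a+c)$. In particular, $x$ and $y$ are s.v.e.\ (corresponding to $(1,0,0)$ and $(0,1,0)$), and together with $z$ they form the standard $\mathbb{Z}$-basis of $H^2(X_{P_5})$, so the maximal basis number is at least $2$.

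To obtain the upper bound I would reduce modulo $2$. In $H^*(X_{P_5})\otimes\mathbb{Z}/2$ the identity above becomes $(ax+by+cz)^2 \equiv c\cdot yz$, and since $yz$ generates $H^4(X_{P_5})\otimes\mathbb{Z}/2$, the element is s.v.e.\ over $\mathbb{Z}/2$ if and only if $c\equiv 0\pmod{2}$. Thus every s.v.e.\ over $\mathbb{Z}$ reduces modulo $2$ into the $2$-dimensional $\mathbb{Z}/2$-subspace spanned by $\bar{x}$ and $\bar{y}$, so any three s.v.e.\ are $\mathbb{Z}/2$-linearly dependent after reduction and therefore cannot form part of a $\mathbb{Z}$-basis of $H^2(X_{P_5})$. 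This yields maximal basis number $\le 2$, giving the claim.

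The only minor obstacle is the bookkeeping in the square computation (making sure every cross term is correctly reduced via $xy=yz$ and $z^{2}=-yz$); after that, the mod $2$ reduction trick is immediate and parallels the way Lemma~\ref{lem:MBN_P6_1} pins down the upper bound. No harder ingredient appears to be needed.
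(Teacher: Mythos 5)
Your proof is correct and follows essentially the same route as the paper: the same computation $(ax+by+cz)^2=(2ab+2bc-c^2)\,yz$ (the paper writes this as $(-2ab-2bc+c^2)z^2$, the same element since $z^2=-yz$), the same lower bound from $x,y$, and the same upper bound from the observation that every s.v.e.\ has $c$ even. The paper states the upper bound tersely ("$c$ must be even, hence at most $2$"); your mod $2$ reduction argument is just the explicit justification of that step.
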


\begin{proof}
The proof is essentially the same as in Lemma~\ref{lem:MBN_P6_1}. 
It follows from \eqref{eq:HP5} that any element of $H^2(X_{P_5})$ is of the form $ax+by+cz$ with integers $a,b,c$ and an elementary computation shows that 
$$ (ax+by+cz)^2=(-2ab-2bc+c^2)z^2. $$
Therefore, s.v.e of $\RPsix$ are primitive elements in the set 
\begin{equation} \label{eq:sveP5}
\{ ax+by+cz \mid c^2=2b(a+c) \}.
\end{equation}
In particular, $x$ and $y$ are s.v.e. and since they form a part of $\ZZ$-basis, 
the maximal basis number of s.v.e. of $\RPfive$ is at least $2$. 

On the other hand, \eqref{eq:sveP5} shows that $c$ must be even. This implies that the basis maximal number must be at most 2, proving the lemma. 
\end{proof}

\begin{lem}\label{lem:tensor}
Let $A = \mathbb{Z}[x_1, \ldots, x_m] / \mathcal{I}_A$ and $B = \mathbb{Z}[y_1, \ldots, y_n] / \mathcal{I}_B$.
If $f$ is an s.v.e. of $A \otimes B$, then $f \in A$ or $f \in B$. In particular, the maximal basis number behaves additively with respect to tensor products. 
\end{lem}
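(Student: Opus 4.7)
The plan is to decompose everything along the natural grading. In the setting of the paper both $A$ and $B$ are cohomology rings of toric manifolds, hence graded with $A^0 = B^0 = \ZZ$ and every graded piece a finitely generated free abelian group. The degree-two part of the tensor product accordingly splits as
\[
(A \otimes B)^2 \;=\; A^2 \otimes B^0 \;\oplus\; A^0 \otimes B^2 \;\cong\; A^2 \oplus B^2,
\]
so any degree-two element writes uniquely as $f = g + h$ with $g \in A^2$ and $h \in B^2$.

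Next I would expand
\[
f^2 \;=\; g^2 \;+\; 2\,(g \otimes h) \;+\; h^2,
\]
where the three summands lie in the pairwise distinct direct summands $A^4 \otimes B^0$, $A^2 \otimes B^2$, and $A^0 \otimes B^4$ of $(A \otimes B)^4$. The hypothesis $f^2 = 0$ therefore forces each of them to vanish separately; in particular $2(g \otimes h) = 0$ in the free abelian group $A^2 \otimes_{\ZZ} B^2$. Torsion-freeness gives $g \otimes h = 0$, and the vanishing of a simple tensor in a tensor product of two free abelian groups forces $g = 0$ or $h = 0$. Hence $f \in A$ or $f \in B$, which is the main claim.

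For the additivity of the maximal basis number, let $S$ be a set of s.v.e.\ in $A \otimes B$ of maximum size that extends to a $\ZZ$-basis of $(A \otimes B)^2$. The first part partitions $S$ as $S = S_A \sqcup S_B$ with $S_A \subset A^2$ and $S_B \subset B^2$. Since $\mathrm{span}(S) = \mathrm{span}(S_A) \oplus \mathrm{span}(S_B)$, the quotient $(A \otimes B)^2 / \mathrm{span}(S)$ decomposes as $(A^2/\mathrm{span}(S_A)) \oplus (B^2/\mathrm{span}(S_B))$, and freeness of this quotient forces freeness of both factors. Thus $S_A$ (resp.\ $S_B$) is part of a $\ZZ$-basis of $A^2$ (resp.\ $B^2$) consisting of s.v.e.\ of $A$ (resp.\ $B$), so the maximal basis number of $A \otimes B$ is at most the sum of the maximal basis numbers of $A$ and $B$. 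The reverse inequality is immediate by concatenating maximum bases of the two factors.

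The only substantive step is the passage from $2(g \otimes h) = 0$ to "$g = 0$ or $h = 0$": this is where I must invoke the freeness of $A^2$ and $B^2$. Everything else is formal bookkeeping with the grading.
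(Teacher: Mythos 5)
Your proof is correct and follows essentially the same route as the paper: decompose $f$ into its $A^2$- and $B^2$-components, expand the square, and observe that the cross term, lying in the direct summand $A^2\otimes B^2$ of $(A\otimes B)^4$, must vanish, which by torsion-freeness forces one component to be zero. The only differences are cosmetic refinements of the same argument: you phrase the cross-term step invariantly as the vanishing of the simple tensor $g\otimes h$ (where the paper concludes $a_ib_j=0$ coefficient-wise, implicitly treating the monomials $x_iy_j$ as linearly independent), and you write out the additivity of the maximal basis number, which the paper asserts in the ``in particular'' clause without proof.
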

\begin{proof}
Let $f = \sum_{i=1}^{m} a_i x_i + \sum_{j=1}^{n} b_j y_j$. Then, 
\begin{align*}
f^2
&= \left( \sum_{i=1}^{m} a_i x_i \right)^2 
+ \left( \sum_{j=1}^{n} b_j y_j \right)^2 
+ 2 \left( \sum_{i=1}^{m} a_i x_i \right) \left( \sum_{j=1}^{n} b_j y_j \right) \\
&= \left( \sum_{i=1}^{m} a_i x_i \right)^2 
+ \left( \sum_{j=1}^{n} b_j y_j \right)^2 
+ 2 \left( \sum_{i=1}^{m} \sum_{j=1}^{n} a_i b_j x_i y_j \right)
\end{align*}
Thus, we have $a_ib_j=0$ for any $i$ and $j$, i.e. 
we have either $a_i=0$ for any $i$ or $b_j=0$ for any $j$. 
\end{proof}

Under these preparations, we start to prove Theorem~\ref{thm:main} for the case (1). There are following five cases according to the number of vertices $V(P)$ and the parity of the dimension $d$ for smooth Fano $d$-polytopes $P$:
\begin{enumerate}
\item $V(P)=3d$ (in this case $d$ must be even),
\item $V(P)=3d-1$ and $d$ is odd,
\item $V(P)=3d-1$ and $d$ is even,
\item $V(P)=3d-2$ and $d$ is odd,
\item $V(P)=3d-2$ and $d$ is even.
\end{enumerate}
In (1) and (3) above, there is only one smooth Fano $d$-polytope by Theorems~\ref{thm:smoothFano3d} and~\ref{thm:smoothFano3d-1}, so it suffices to treat the remaining three cases. In the following, we may assume $d\ge 3$ since it is known (and easy to see) that smooth toric $d$-folds are distinguished by their cohomology rings when $d\le 2$. 

\subsection{The case where $V(P)=3d-1$ with $d$ odd $\ge 3$}

By Theorem~\ref{thm:smoothFano3d-1}, there are two $P$'s up to unimodular equivalence and their vertices are as follows:
\begin{align}
& e_1,\ \hspace{1em} -e_1, \hspace{1em}\ \pm e_2,\ \pm e_3,\ \pm (e_2-e_3), \ldots,\ \pm e_{d-1},\ \pm e_d,\ \pm (e_{d-1}-e_d), \tag{$Y_1^d$} \\
& e_1,\ -e_1+e_2,\ \pm e_2,\ \pm e_3,\ \pm (e_2-e_3), \ldots,\ \pm e_{d-1},\ \pm e_d,\ \pm (e_{d-1}-e_d). \tag{$Y_2^d$}
\end{align}
where the tags denote the corresponding toric Fano $d$-folds. In each case, the first two vertices correspond to the apices of the bipyramid and $\{ \pm e_{2k}, \pm e_{2k+1}, \pm (e_{2k}-e_{2k+1}) \}$
$(1 \leq k \leq \frac{d-1}{2})$ forms the hexagon $P_6$. 
One can see from the above that 
$$H^*(Y_1^d) = \mathbb{Z}[x] / (x^2) \otimes \RPsix^{\otimes \frac{d-1}{2}}, \quad 
H^*(Y_2^d) =H^*(Y_2^3)\otimes \RPsix^{\otimes \frac{d-3}{2}}$$ 
We number the eight vertices 
\[
e_2,\ \ e_3,\ \ -e_2+e_3,\ \ -e_2,\ \ -e_3,\ \ e_2-e_3,\ \ e_1,\ \ -e_1+e_2
\]
in $(Y_2^3)$ above from $1$ to $8$ and denote the corresponding elements in $H^2(Y_2^3)$ by $\mu_1,\dots,\mu_8$. Then we have 
\begin{align*}
& H^*(Y_2^3)\\
&\cong \mathbb{Z}[\mu_1,\mu_2,\mu_3,\mu_4,\mu_5,\mu_6,\mu_7,\mu_8] / ((\mu_1\mu_3, \mu_1\mu_4, \mu_1\mu_5, \mu_2\mu_4, \mu_2\mu_5, \mu_2\mu_6, \mu_3\mu_5, \mu_3\mu_6, \mu_4\mu_6, \mu_7\mu_8) \\
&\hspace{5.5cm} +(\mu_1-\mu_3-\mu_4+\mu_6+\mu_8, \mu_2+\mu_3-\mu_5-\mu_6, \mu_7-\mu_8)) \\
&\cong \mathbb{Z}[x,y,z,w,v] / (x(x+y-v),y(x+y-v),z(y-w-v), \\
&\hspace{5.5cm} y(x-z),z(z+w),w(z+w),xz,xw,yw,v^2),
\end{align*}
where $x=\mu_3$, $y=\mu_4$, $z=\mu_5$, $w=\mu_6$ and $v=\mu_8$. 
One can check that 
\begin{equation} \label{eq:sveY2}
\text{s.v.e. of $H^*(Y_2^3)=\{2y+2z-v, 2x+2y-v, z+w, v\}$\ (up to sign),} 
\end{equation}
and hence the maximal basis number of $H^*(Y_2^3)$ is $2$. Therefore, it follows from Lemma~\ref{lem:MBN_P6_1} and Lemma~\ref{lem:tensor} that the maximal basis numbers of $H^*(Y_1^d)$ and $H^*(Y_2^d)$ are as in the following table, so $H^*(Y_1^d)$ and $H^*(Y_2^d)$ are not isomorphic to each other. 

{\scriptsize
\begin{table}[H]
\centering
\begin{tabular}{|c|c|} \hline
Ring & Maximal basis number \\ \hline
$H^*(Y_1^d)$ & $1+3\cdot\frac{d-1}{2}$ \\ \hline
$H^*(Y_2^d)$ & $-1+3\cdot\frac{d-1}{2}$ \\ \hline
\end{tabular}
\end{table}
}

\subsection{The case where $V(P)=3d-2$ with $d$ odd $\ge 3$}

We first treat the case where $d=3$. By Theorem~\ref{thm:smoothFano3d-2}, the vertices of $P$ are one of the following: 
\begin{align}
& e_1,\ \hspace{1em} -e_1,\ \hspace{1em} e_2,\ \pm e_3,\ \pm (e_2-e_3), \tag{$Z_1$}\\
& e_1,\ -e_1+e_2,\ e_2,\ \pm e_3,\ \pm (e_2-e_3), \tag{$Z_2$} \\
& e_1,\ -e_1+e_3,\ e_2,\ \pm e_3,\ \pm (e_2-e_3), \tag{$Z_3$} \\
& e_1,\ -e_1-e_3,\ e_2,\ \pm e_3,\ \pm (e_2-e_3). \tag{$Z_4$}
\end{align}
In each case, $\{ e_2, \pm e_3, \pm (e_2-e_3) \}$ forms $P_5$ and the first two vertices correspond to the apices of the bipyramid over $P_5$. 

We claim that $Z_3$ and $Z_4$ are diffeomorphic. Indeed, the vertices in $Z_3$ are unimodularly equivalent to 
\[
-e_1,\ e_1+e_3,\ e_2,\ \pm e_3,\ \pm (e_2-e_3)
\]
through an automorphism $(x_1,x_2,x_3) \to (-x_1,x_2,x_3)$ of $\ZZ^3$, and these vectors agree with the vectors in $(Z_4)$ up to sign. Therefore, $Z_3$ and $Z_4$ are diffeomorphic by Lemma~\ref{diffeolemma}. 

We shall observe that the cohomology rings of $Z_1,Z_2,Z_3$ are not isomorphic to each other. 
Clearly $Z_1=\CC P^1\times X_{P_5}$ and hence 
\[
H^*(Z_1) \cong \mathbb{Z}[x] / (x^2) \otimes \RPfive. 
\]
To describe the cohomology rings of the remaining ones, we number the seven vertices 
\[
e_2,\ e_3,\ -e_2+e_3,\ -e_3,\ e_2-e_3,\ e_1,\ -e_1+* 
\]
from $1$ to $7$, where $*=e_2$ or $\pm e_3$. We denote the corresponding elements in $H^2(Z_i)$ for $i=2,3$ by $\mu_1,\dots,\mu_7$ and set 
\[
x=\mu_3,\ y=\mu_4,\ z=\mu_5,\ w=\mu_7.
\] 
Then we have 
\begin{align*}
H^*(Z_2) 
&\cong \mathbb{Z}[\mu_1,\mu_2,\mu_3,\mu_4,\mu_5,\mu_6,\mu_7] / ((\mu_1\mu_3, \mu_1\mu_4, \mu_2\mu_4, \mu_2\mu_5, \mu_3\mu_5, \mu_6\mu_7) \\
&\hspace{5cm} +(\mu_1-\mu_3+\mu_5+\mu_7, \mu_2+\mu_3-\mu_4-\mu_5,\mu_6-\mu_7)) \\
&\cong \mathbb{Z}[x,y,z,w] / (x(x-w),y(x-z-w),y(y-w),z(y+z),xz,w^2); \\
H^*(Z_3) 
&\cong \mathbb{Z}[\mu_1,\mu_2,\mu_3,\mu_4,\mu_5,\mu_6,\mu_7] / ((\mu_1\mu_3, \mu_1\mu_4, \mu_2\mu_4, \mu_2\mu_5, \mu_3\mu_5, \mu_6\mu_7) \\
&\hspace{5cm} +(\mu_1-\mu_3+\mu_5, \mu_2+\mu_3-\mu_4-\mu_5+\mu_7,\mu_6-\mu_7)) \\
&\cong \mathbb{Z}[x,y,z,w] / (x^2,y(x-z),y(y-w),z(y+z-w),xz,w^2).
\end{align*}
By an elementary computation using these presentations together with Lemmas~\ref{lem:MBN_P5_1} and~\ref{lem:tensor}, we obtain the following table; so $H^*(Z_i)$ for $i=1,2,3$ are not isomorphic to each other.

{\scriptsize
\begin{table}[H]
\centering
\begin{tabular}{|c|c|c|} \hline
Ring & s.v.e. & Maximal basis number \\ \hline
$H^*(Z_1)$ & infinitely many & $3$ \\ \hline
$H^*(Z_2)$ & $2x-w,2y-w,w$ & $1$ \\ \hline
$H^*(Z_3)$ & $x,2y-w,w$ & $2$ \\ \hline
\end{tabular}
\end{table}
}

Now we treat the case where $d \geq 5$. By Theorem~\ref{thm:smoothFano3d-2}, one can see that the vertices of $P$ are one of the following: 
\[
\text{the vertices in $(Z_i)$},\ \pm e_4, \pm e_5, \pm (e_4-e_5), \ldots, \pm e_{d-1}, \pm e_d, \pm (e_{d-1}-e_d) \tag{$Z^d_i$}
\]
for $i=1,\dots,4$ or 
\[
e_1, -e_1+e_4, e_2, \pm e_3, \pm (e_2-e_3), \pm e_4, \pm e_5, \pm (e_4-e_5), \ldots, \pm e_{d-1}, \pm e_d, \pm (e_{d-1}-e_d). \tag{$Z^d_5$}
\]
Note that $Z_5^d$ appears in the case $d \geq 5$ since $e_4$ and the vertices $e_2, \pm e_3, \pm (e_2-e_3)$ cannot be replaced each other by unimodular transformation. 
In each case, $\{\pm e_{2k}, \pm e_{2k+1}, \pm (e_{2k}-e_{2k+1})\}$ $(2 \leq k \leq \frac{d-1}{2})$ forms $P_6$ and the first two vertices correspond to the apices of the bipyramid. We also note that $\{e_2, \pm e_3, \pm (e_2-e_3)\}$ in $(Z_5^d)$ forms $P_5$. Therefore, one can see from the above that 
\begin{equation} \label{eq:Zid}
Z_i^d=Z_i\times (X_{P_6})^{\frac{d-3}{2}}\quad \text{for $i=1,2,3,4$}\qquad\text{and}\qquad Z_5^d=Y_2\times X_{P_5}\times (X_{P_6})^{\frac{d-5}{2}},
\end{equation}
where $Y_2$ denotes the Fano $3$-fold $Y_2^3$ in the previous subsection. Therefore, $H^*(Z_i^d)$ for $i=1,\ldots,5$ are the tensor product of the cohomology rings of the direct factors in $Z_i^d$. Since $Z_3$ and $Z_4$ are diffeomorphic as observed before, so are $Z_3^d$ and $Z_4^d$. It follows from Lemmas~\ref{lem:MBN_P6_1},~\ref{lem:MBN_P5_1} and~\ref{lem:tensor} that we obtain the following table. 

{\scriptsize
\begin{table}[H]
\centering
\begin{tabular}{|c|c|c|} \hline
Ring & Maximal basis number \\ \hline
$H^*(Z_1^d)$ & $3+3\cdot\frac{d-3}{2}$ \\ \hline
$H^*(Z_2^d)$ & $1+3\cdot\frac{d-3}{2}$ \\ \hline
$H^*(Z_3^d)$ & $2+3\cdot\frac{d-3}{2}$ \\ \hline
$H^*(Z_5^d)$ & $1+3\cdot\frac{d-3}{2}$ \\ \hline
\end{tabular}
\end{table}
}

Thus, we have to check $H^*(Z_2^d) \cong H^*(Z_5^d)$ or not. 
Suppose that there is an isomorphism 
$$F : H^*(Z_5^d)\cong H^*(Y_2)\otimes H^*(X_{P_5})\otimes H^*(X_{P_6})^{\otimes \frac{d-5}{2}} \rightarrow H^*(Z_2^d)\cong H^*(Z_2)\otimes H^*(X_{P_6})^{\otimes \frac{d-3}{2}}.$$
We note that any s.v.e. of $H^*(Z_5^d)$ belongs to one of the factors of the tensor products by Lemma~\ref{lem:tensor}. Let $f$ be an s.v.e. of $H^*(Z_5^d)$ which belongs to the factor $H^*(Y_2)$. We consider the following set 
\[
S(f):=\{ g\in H^2(Z_5^d)\mid \text{$g$ is an s.v.e. and $\{f, g\}$ is not a part of a $\ZZ$-basis of $H^2(Z_5^d)$}\}. 
\]
If $g\in S(f)$, then $g$ must belong to $H^*(Y_2)$ because otherwise $\{f,g\}$ is a part of a $\ZZ$-basis of $H^2(Z_5^d)$. Therefore $S(f)$ is a finite set by \eqref{eq:sveY2} and hence so is $S(F(f))$ because $F$ is an isomorphism. This together with Lemma~\ref{lem:MBN_P6_2} shows that $F(f)$ must be an s.v.e. of $H^*(Z_2)$. This means that $F$ sends the set of s.v.e. of $H^*(Y_2)$ to the set of s.v.e. of $H^*(Z_2)$. However, the cardinality of the former set up to sign is $4$ by \eqref{eq:sveY2} while that of the latter set up to sign is $3$ (see the previous subsection). This contradicts the injectivity of $F$. Hence, $H^*(Z_2^d)$ and $H^*(Z_5^d)$ are not isomorphic.

\subsection{The case where $V(P)=3d-2$ with $d$ even $\ge 4$} \label{subsec:3d-2even}

Theorem~\ref{thm:smoothFano3d-2} says that there are three types of $P$'s, i.e. (1), (2) and (3) in the theorem, but they have different face numbers (Remark~\ref{rem:facetnumber}). Therefore, the toric Fano $d$-folds in these different types can be distinguished by their cohomology rings. Since there are only one smooth Fano $d$-polytope in (2) and (3), its suffices to investigate the case (1). 

We first treat the case where $d=4$. One can see that the vertices of $P$ in Theorem.\ref{thm:smoothFano3d-2}(1) are one of the following: 
\begin{align}
& e_1,\ \hspace{1em} -e_1,\ \hspace{1em} e_2,\ \hspace{1em} -e_2,\ \hspace{1em} \pm e_3,\ \pm e_4,\ \pm (e_3-e_4), \tag{$W_1$}\\
& e_1,\ -e_1+e_2,\ e_2,\ \hspace{1em} -e_2,\ \hspace{1em} \pm e_3,\ \pm e_4, \ \pm (e_3-e_4), \tag{$W_2$} \\
& e_1,\ -e_1+e_2,\ e_2,\ -e_2+e_3,\ \pm e_3,\ \pm e_4,\ \pm (e_3-e_4), \tag{$W_3$} \\
& e_1,\ -e_1+e_3,\ e_2,\ \hspace{1em} -e_2,\ \hspace{1em} \pm e_3,\ \pm e_4,\ \pm (e_3-e_4), \tag{$W_4$} \\
& e_1,\ -e_1+e_3,\ e_2,\ -e_2+e_3,\ \pm e_3,\ \pm e_4,\ \pm (e_3-e_4), \tag{$W_5$} \\
& e_1,\ -e_1+e_3,\ e_2,\ -e_2+e_4,\ \pm e_3,\ \pm e_4,\ \pm (e_3-e_4), \tag{$W_6$} \\
& e_1,\ -e_1+e_3,\ e_2,\ -e_2-e_3,\ \pm e_3,\ \pm e_4,\ \pm (e_3-e_4), \tag{$W_7$} \\
& e_1,\ -e_1+e_3,\ e_2,\ -e_2-e_4,\ \pm e_3,\ \pm e_4,\ \pm (e_3-e_4). \tag{$W_8$} 
\end{align}
In each case, $\{ \pm e_3, \pm e_4, \pm (e_3-e_4) \}$ forms the hexagon $P_6$ and the first two vertices and second two vertices correspond to the apices of the double bipyramid over $P_6$. 
Note that those $W_1,\ldots, W_8$ can be obtained by considering the pair of the first two vertices and the second two vertices, i.e. 
where each of two segments coming from apices intersect. We can check that those are exactly the possible cases. 

We claim that $W_5$ is diffeomorphic to $W_7$. Indeed, the vertices in $W_5$ are unimodularly equivalent to 
\[
e_1,\ -e_1+e_3,\ -e_2,\ e_2+e_3,\ \pm e_3,\ \pm e_4,\ \pm (e_3-e_4)
\]
through an automorphism $(x_1,x_2,x_3,x_4) \to (x_1,-x_2,x_3,x_4)$ of $\ZZ^4$, and these vectors agree with the vectors in $W_7$ up to sign. Therefore, $W_5$ and $W_7$ are diffeomorphic by Lemma~\ref{diffeolemma}. The same argument shows that $W_6$ is diffeomorphic to $W_8$. 

We shall observe that $H^*(W_i)$ for $i=1,\dots,6$ are not isomorphic to each other. One can easily see 
\begin{align*}
H^*(W_1) \cong \mathbb{Z}[x,y] / (x^2,y^2) \otimes \RPsix,\;\;\text{and}\;\; H^*(W_2) \cong \mathbb{Z}[x,y] / (x^2,y(y-x)) \otimes \RPsix. 
\end{align*}
To describe the cohomology rings of the remaining ones, we number the ten vertices 
\[
e_3,\ e_4,\ -e_3+e_4,\ -e_3,\ -e_4,\ e_3-e_4,\ e_1,\ -e_1+*,\ e_2,\ -e_2+\star
\]
from $1$ to $10$, where $*=e_2$ or $e_3$ and $\star=0, \pm e_3$ or $\pm e_4$. We denote the corresponding elements in $H^2(W_i)$ for $i=3,4,\dots,8$ by $\mu_1,\dots,\mu_{10}$ and set 
\[
x=\mu_3,\ y=\mu_4,\ z=\mu_5,\ w=\mu_6,\ v=\mu_8,\ u=\mu_{10}.
\] 
Then we have
\begin{align*}
\hspace{-0em}H^*(W_3) 
&\hspace{-0em}\cong \mathbb{Z}[\mu_1,\mu_2,\mu_3,\mu_4,\mu_5,\mu_6,\mu_7,\mu_8,\mu_9,\mu_{10}] / \\
&\hspace{3em}((\mu_1\mu_3, \mu_1\mu_4, \mu_1\mu_5, \mu_2\mu_4, \mu_2\mu_5, \mu_2\mu_6, \mu_3\mu_5, \mu_3\mu_6, \mu_4\mu_6, \mu_7\mu_8, \mu_9\mu_{10}) \\
&\hspace{3.5em} +(\mu_1-\mu_3-\mu_4+\mu_6+\mu_{10}, \mu_2+\mu_3-\mu_5-\mu_6, \mu_7-\mu_8, \mu_8+\mu_9-\mu_{10})) \\
&\hspace{-0em}\cong \mathbb{Z}[x,y,z,w,v,u] / (x(x+y-u),y(x+y-u),z(y-w-u), \\
&\hspace{8em} y(x-z),z(z+w),w(z+w),xz,xw,yw,v^2,u(u+v)).
\end{align*}
\begin{align*}
\hspace{-0em}H^*(W_4) 
&\hspace{-0em}\cong \mathbb{Z}[\mu_1,\mu_2,\mu_3,\mu_4,\mu_5,\mu_6,\mu_7,\mu_8,\mu_9,\mu_{10}] / \\
&\hspace{4em}((\mu_1\mu_3, \mu_1\mu_4, \mu_1\mu_5, \mu_2\mu_4, \mu_2\mu_5, \mu_2\mu_6, \mu_3\mu_5, \mu_3\mu_6, \mu_4\mu_6, \mu_7\mu_8, \mu_9\mu_{10}) \\
&\hspace{5.5em} +(\mu_1-\mu_3-\mu_4+\mu_6+\mu_8, \mu_2+\mu_3-\mu_5-\mu_6, \mu_7-\mu_8, \mu_9-\mu_{10})) \\
&\hspace{-0em}\cong \mathbb{Z}[x,y,z,w,v,u] / (x(x+y-v),y(x+y-v),z(y-w-v), \\
&\hspace{8em} y(x-z),z(z+w),w(z+w),xz,xw,yw,v^2,u^2).
\end{align*}
\begin{align*}
\hspace{-0em}H^*(W_5)
&\hspace{-0em}\cong \mathbb{Z}[\mu_1,\mu_2,\mu_3,\mu_4,\mu_5,\mu_6,\mu_7,\mu_8,\mu_9,\mu_{10}] / \\
&\hspace{4em}((\mu_1\mu_3, \mu_1\mu_4, \mu_1\mu_5, \mu_2\mu_4, \mu_2\mu_5, \mu_2\mu_6, \mu_3\mu_5, \mu_3\mu_6, \mu_4\mu_6, \mu_7\mu_8, \mu_9\mu_{10}) \\
&\hspace{4.5em} +(\mu_1-\mu_3-\mu_4+\mu_6+\mu_8+\mu_{10}, \mu_2+\mu_3-\mu_5-\mu_6, \mu_7-\mu_8, \mu_9-\mu_{10})) \\
&\hspace{-0em}\cong \mathbb{Z}[x,y,z,w,v,u] / (x(x+y-v-u),y(x+y-v-u),z(y-w-v-u), \\
&\hspace{8em} y(x-z),z(z+w),w(z+w),xz,xw,yw,v^2,u^2).
\end{align*}
\begin{align*}
\hspace{-0em}H^*(W_6) 
&\hspace{-0em}\cong \mathbb{Z}[\mu_1,\mu_2,\mu_3,\mu_4,\mu_5,\mu_6,\mu_7,\mu_8,\mu_9,\mu_{10}] / \\
&\hspace{4em}((\mu_1\mu_3, \mu_1\mu_4, \mu_1\mu_5, \mu_2\mu_4, \mu_2\mu_5, \mu_2\mu_6, \mu_3\mu_5, \mu_3\mu_6, \mu_4\mu_6, \mu_7\mu_8, \mu_9\mu_{10}) \\
&\hspace{4.5em} +(\mu_1-\mu_3-\mu_4+\mu_6+\mu_8, \mu_2+\mu_3-\mu_5-\mu_6+\mu_{10}, \mu_7-\mu_8, \mu_9-\mu_{10})) \\
&\hspace{-0em}\cong \mathbb{Z}[x,y,z,w,v,u] / (x(x+y-v),y(x+y-v),z(y-w-v), \\
&\hspace{8em} y(x-z+u),z(z+w-u),w(z+w-u),xz,xw,yw,v^2,u^2).
\end{align*}
By an elementary computation using the above presentations together with Lemma~\ref{lem:MBN_P6_1}, we obtain the following table; so $H^*(W_i)$ for $i=1,2,3$ are not isomorphic to each other. 

{\scriptsize
\begin{table}[H]
\centering
\begin{tabular}{|c|c|c|} \hline
Ring & s.v.e. & Maximal basis number \\ \hline
$H^*(W_1)$ & infinitely many & $5$ \\ \hline
$H^*(W_2)$ & infinitely many & $4$ \\ \hline
$H^*(W_3)$ & $z+w,v+2u,v$ & $2$ \\ \hline
$H^*(W_4)$ & $2y+2z-v,2x+2y-v,v,z+w,u$ & $3$ \\ \hline
$H^*(W_5)$ & $z+w,v,u$ & $3$ \\ \hline
$H^*(W_6)$ & $2x+2y-v,v,2z+2w-u,u$ & $2$ \\ \hline
\end{tabular}
\medskip
\label{table:sveWi}
\end{table}
}

Now we treat the case where $d\ge 6$. The vertices of $P$ in Theorem~\ref{thm:smoothFano3d-2} (1) are either 
\[
\text{the vertices in $(W_i)$},\ \pm e_5,\ \pm e_6,\ \pm (e_5-e_6), \ldots, \pm e_{d-1},\ \pm e_d,\ \pm (e_{d-1}-e_d)\tag{$W^d_i$}
\]
for $i=1,\dots,8$ or 
\[
e_1,\ -e_1+e_3,\ e_2,\ -e_2+e_5,\ \pm e_3, \pm e_4, \pm (e_3-e_4), \ldots, \pm e_{d-1}, \pm e_d, \pm (e_{d-1}-e_d). \tag{$W^d_9$}
\]
Note that $W_9^d$ appears in the case $d \geq 6$ since $e_5$ and the vertices $\pm e_3, \pm e_4, \pm (e_3-e_4)$ cannot be replaced each other by unimodular transformation. 
The first two vertices and second two vertices correspond to apices of the double bipyramid and $\{ \pm e_{2k-1}, \pm e_{2k}, \pm (e_{2k-1}-e_{2k}) \}$ $(3 \leq k \leq \frac{d}{2})$ forms $P_6$. 
One can see from the above that 
\begin{equation} \label{eq:Wid}
W_i^d=W_i\times (X_{P_6})^{\frac{d-4}{2}} \quad\text{for $i=1,\dots,8$}\quad \text{and}\quad
W_9^d=Y_2\times Y_2\times (X_{P_6})^{\frac{d-6}{2}},
\end{equation}
where $Y_2$ is the $3$-fold $Y_2^3$ in subsection 1.2. 
Since $W_5$ (resp. $W_6$) is diffeomorphic to $W_7$ (resp. $W_8$), $W_5^d$ (resp. $W_6^d$) is diffeomorphic to $W_7^d$ (resp. $W_8^d$). 
The maximal basis number of $H^*(X_{P_6})$ is $3$ by Lemma~\ref{lem:MBN_P6_1} and that of $H^*(Y_2)$ is $2$ by \eqref{eq:sveY2}. Therefore, it follows from \eqref{eq:Wid}, Table~\ref{table:sveWi}, and Lemma~\ref{lem:tensor} that we obtain the following table.

{\scriptsize
\begin{table}[H]
\centering
\begin{tabular}{|c|c|c|} \hline
Ring & Maximal basis number \\ \hline
$H^*(W^d_1)$ & $5+3\cdot\frac{d-4}{2}$ \\ \hline
$H^*(W^d_2)$ & $4+3\cdot\frac{d-4}{2}$ \\ \hline
$H^*(W^d_3)$ & $2+3\cdot\frac{d-4}{2}$ \\ \hline
$H^*(W^d_4)$ & $3+3\cdot\frac{d-4}{2}$ \\ \hline
$H^*(W^d_5)$ & $3+3\cdot\frac{d-4}{2}$ \\ \hline
$H^*(W^d_6)$ & $2+3\cdot\frac{d-4}{2}$ \\ \hline
$H^*(W^d_9)$ & $-2+3\cdot\frac{d-4}{2}$ \\ \hline
\end{tabular}
\end{table}
}

Thus, we have to check whether $H^*(W^d_3) \cong H^*(W^d_6)$ or not and $H^*(W^d_4) \cong H^*(W^d_5)$ or not, but the same argument as in the last part of the previous subsection shows that they are not isomorphic. Indeed, if there is an isomorphism $F : H^*(W^d_6) \rightarrow H^*(W^d_3)$, then $F$ must send the set of s.v.e. of $H^*(W_6)$ to that of $H^*(W_3)$. However, the cardinality of the former set up to sign is $4$ while that of the latter set is $3$ (see Table~\ref{table:sveWi}). This contradicts the injectivity of $F$. Therefore, $H^*(W^d_3)$ is not isomorphic to $H^*(W^d_6)$. The same argument shows that $H^*(W^d_4)$ is not isomorphic to $H^*(W^d_5)$.

\section{The case of dimension $3$}\label{sec:dim3}

In the remaining sections, we use the database by {\O}bro mentioned in Introduction.  Each smooth Fano polytope or toric Fano manifold has ID.  We will denote the toric Fano manifold with ID number $q$ by $X_q$. 

There are 18 variety-isomorphism classes of toric Fano $3$-folds, in other words, 18 unimodular equivalence classes of smooth Fano $3$-polytopes. In this section, we will classify them up to diffeomorphism. It turns out that the cohomological rigidity holds for them.  More precisely, there are 16 diffeomorphism classes as is shown in Table~\ref{table:diff3fold} below, where ID numbers whose toric Fano $3$-folds are diffeomorphic are enclosed by curly braces, and five ID numbers before $\|$ are Bott manifolds. In Table~\ref{table:diff3fold}, $V(P)$ is the number of vertices of $P$, $P$ shows the unimodular equivalence classes of smooth Fano $3$-polytopes, $H^*$ shows the isomorphism classes of integer cohomology rings, and Diff shows the diffeomorphism classes. 

{\scriptsize
\begin{table}[H]
\centering
\begin{tabular}{|c|c|c|c|c|} \hline
$V(P)$ & $P$ & $H^*$ & Diff & ID \\ \hline\hline
4 & 1 & 1 & 1 & 23 \\ \hline 
5 & 4 & 4 & 4 & 7, 19, 20, 22 \\ \hline
6 & 7 & 6 & 6 & $\{11,18\}$, 12, 17, 21 $\|$ 6, 16\\ \hline
7 & 4 & 3 & 3 & 8, $\{10, 13\}$, 14 \\ \hline
8 & 2 & 2 & 2 & 9, 15\\ \hline
{total} & 18 & 16 & 16 & \\ \hline
\end{tabular} 
\medskip
\caption{Diffeomorphism classification of toric Fano $3$-folds} 
\label{table:diff3fold}
\end{table}
}

We shall explain how we obtain Table~\ref{table:diff3fold}. 
There is only one smooth Fano $3$-polytope $P$ with $V(P)=4$, so there is nothing to prove in this case. The case where $V(P)=7$ or $8$ is treated in Section~\ref{sec:Picard}. Indeed, toric Fano $3$-folds $X_q$ with $q=8, 10, 13, 14, 9, 15$ are respectively $Z_2, Z_3, Z_4, Z_1, Y_2^3, Y_1^3$ in Section~\ref{sec:Picard}. Therefore, it suffices to investigate the case where $V(P)=5$ or $6$. 

\medskip
\noindent
{\bf Convention.} 
\begin{enumerate}
\item The vertices of a smooth Fano $3$-polytope $P$ are shown in the database of {\O}bro and we number them as $1,2,\dots$ in the order shown in the database.  
\item The first three vertices of $P$ are the standard basis of $\ZZ^3$, so we omit them and write the vertices from $4$th in Tables~\ref{tab:VMNF5} and~\ref{tab:VMNF6} below.  
\item Minimal nonfaces of $P$ are described using the numbering of the vertices of $P$.  
\item $\mathcal{I}$ denotes the ideal of the cohomology ring $H^*(X_q)$ and its minimal generators are described in the tables. 
\item s.v.e. and c.v.e. in the tables are up to sign unless the coefficient is $\ZZ/2$. 
\end{enumerate}

\subsection{The case where $V(P) = 5$}

In this case, there are four smooth Fano $3$-polytopes as shown in Table~\ref{tab:VMNF5}.  They are all combinatorially equivalent to a direct sum of a $2$-simplex and a $1$-simplex, so the corresponding toric Fano $3$-folds are generalized Bott manifolds.  

{\scriptsize
\begin{table}[H]
\centering
\begin{tabular}{|c|c|c|} \hline
ID & vertices of $P$ from $4$th & minimal nonfaces \\ \hline
7 & $(-1,-1,2), (0,0,-1)$ & $35, 124$ \\ \hline
19 & $(-1,0,1), (0,-1,-1)$ & $14, 235$ \\ \hline
20 & $(-1,-1,1), (0,0,-1)$ & $35, 124$ \\ \hline
22 & $(-1,0,0), (0,-1,-1)$ & $14, 235$ \\ \hline
\end{tabular}
\medskip
\caption{Vertices and minimal nonfaces of $P$ with $V(P)=5$} \label{tab:VMNF5}
\end{table}
}

We denote the degree two cohomology element corresponding to $4$th and $5$th vertices by $x$ and $y$, respectively. Then the cohomology ring of each toric Fano $3$-fold with ID number in Table~\ref{tab:VMNF5} is the quotient of a polynomial ring $\ZZ[x,y]$ by an ideal $\mathcal{I}$. By an elementary computation, we obtain Table~\ref{d=3v=5} which shows that those four cohomology rings are not isomorphic to each other.

{\scriptsize
\begin{table}[H]
\centering
\begin{tabular}{|c|c|c|c|c|} \hline
ID & $\mathcal{I}$ & s.v.e. & s.v.e. $\ZZ/2$ & c.v.e. $\ZZ/3$ \\ \hline
7 & $x^3,y(2x-y)$ & $\emptyset$ & $(y)$ &  \\ \hline
{19} & $x^2, y^2(x-y)$ & $x$ & $(x)$ & $(x)$ \\ \hline
20 & $x^3,y(x-y)$ & $\emptyset$ & $\emptyset$ &  \\ \hline
{22} & $x^2,y^3$ & $x$ & $(x)$ & $(x,y)$ \\ \hline
\end{tabular}
\medskip
\caption{Ideals and invariants when $V(P)=5$}\label{d=3v=5}
\end{table}
}


\subsection{The case where $V(P)=6$}

In this case, there are seven smooth Fano $3$-polytopes $P$ as shown in Table~\ref{tab:VMNF6}.  The polytopes with ID numbers 11, 12, 18, 18, 21 are combinatorially equivalent to a cross-polytope, so the corresponding toric Fano $3$-folds are Bott manifolds.  

{\scriptsize
\begin{table}[H]
\centering
\begin{tabular}{|c|c|c|} \hline
ID & vertices of $P$ from $4$th & minimal nonfaces \\ \hline
11 & $(-1,0,1), (0,-1,1), (0,0,-1)$ & $14,25,36$ \\ \hline
12 & $(-1,0,1), (0,-1,0), (0,1,-1)$ & $14,25,36$ \\ \hline
17 & $(-1,0,1), (0,-1,0), (0,0,-1)$ & $14,25,36$ \\ \hline
18 & $(-1,0,1),(0,-1,-1),  (0,0,-1)$ & $14,25,36$ \\ \hline
21 & $(-1,0,0), (0,-1,0), (0,0,-1)$ & $14,25,36$ \\ \hline\hline
6& $(-1,-1,2), (0,1,-1), (0,0,-1)$ & $26,35,36,124,145$ \\ \hline
16& $(-1,0,1), (1,0,-1), (-1,-1,0)$ & $14,35,45,126,236$ \\ \hline
\end{tabular}
\medskip
\caption{Vertices and minimal nonfaces of $P$ with $V(P)=6$} \label{tab:VMNF6}
\end{table}
}

\begin{rem}
We interchanged $5$th and $6$th vertices in \cite{Oeb} for ID numbers 12 and 18 so that the minimal nonfaces have the same numbering as others. 
\end{rem}
Through an automorphism $(x_1,x_2,x_3)\to (x_1,-x_2,x_3)$ of $\ZZ^3$, the vertices of ID number 11 are unimodularly equivalent to 
$(1,0,0),\ (0,-1,0),\ (0,0,1),\ (-1,0,1),\ (0,1,1),\ (0,0,-1)$
and these agree with the vertices of ID number 18 up to sign, so $X_{11}$ is diffeomorphic to $X_{18}$ by Lemma~\ref{diffeolemma}. 

We denote the degree two cohomology element corresponding to $4$th, $5$th and $6$th vertices by $x$, $y$ and $z$ respectively. Then the cohomology ring of each toric Fano $3$-fold with ID number in Table~\ref{tab:VMNF6} is the quotient of a polynomial ring $\ZZ[x,y,z]$ by an ideal $\mathcal{I}$. By an elementary computation, we obtain Table~\ref{d=3v=6}, which shows that the cohomology rings in the tables are not isomorphic to each other: 

{\scriptsize
\begin{table}[H]
\centering
\begin{tabular}{|c|c|c|c|} \hline
ID & $\mathcal{I}$ & s.v.e. & maximal basis number \\ \hline
11(18) & $x^2,y^2,z(x+y-z)$ & $x,y$ & $2$ \\ \hline
{12} & $x^2,y(y-z),z(x-z)$ & $x,x-2z$ & $1$ \\ \hline
17 & $x^2,y^2,z(x-z)$ & $x,y,x-2z$ & $2$ \\ \hline
21 & $x^2,y^2,z^2$ & $x,y,z$ & $3$ \\ \hline\hline
6 & $z(x-y),y(2x-y-z),z(x-z),x^3,x^2y$ & $\emptyset$ & $0$ \\ \hline
16 & $x(x+z),y^2,xy,z^3,z^2(x-y)$ & $y$ & $1$ \\ \hline
\end{tabular}
\medskip
\caption{Ideals and invariants when $V(P)=6$}\label{d=3v=6}
\end{table}
}


\section{The case of dimension $4$}\label{sec:dim4} 

There are $124$ variety-isomorphism classes of toric Fano $4$-folds, in other words, $124$ unimodular equivalence classes of smooth Fano polytopes $P$ of dimension $4$. In this section we will classify them up to diffeomorphism. It turns out that the cohomological rigidity holds for them except for $X_{50}$ and $X_{57}$. The $X_{50}$ and $X_{57}$ have isomorphic cohomology rings and their total Pontryagin classes are preserved under an isomorphism between their cohomology rings but we do not know whether they are diffeomorphic or not. 

In Table~\ref{table:diff4fold} below, ID numbers whose toric Fano $4$-folds are diffeomorphic are enclosed by curly braces as before.  Thirteen toric Fano $4$-folds with the ID numbers in the upper two lines in the row of $(V(P),F(P))=(8,16)$ are Bott manifolds and toric Fano $4$-folds for $(V(P),F(P))=(6,8), (6,9), (7,12)$ are generalized Bott manifolds, where $V(P)$ denotes the number of vertices of $P$ as before and $F(P)$ denotes the number of facets of $P$.  

{\scriptsize 
\begin{table}[H]
\centering
\begin{tabular}{|c|c|c|c|c|c|} \hline
$V(P)$& $F(P)$ & $P$ & $H^*$ & Diff & ID \\ \hline 
5 & 5 & 1 & 1 & 1 & 147\\ \hline 
6 & 8 & 5 & 5 & 5 & 25, 138, 139, 144, 145 \\ \hline
6 & 9 & 4 & 3 & 3 & 44, $\{70, 141\}$, 146 \\ \hline 
7 & 11 & 3 & 3 & 3 & 24, 127, 128 \\ \hline
7 & 12 & 19 & 16 & 16 & $\{30, 43\}$, 31, 35, 42, 49, 66, $\{68, 134\}$, 109 \\
& & & & & 117, $\{129, 136\}$, 132, 133, 135, 140, 143, 197 \\ \hline
7 & 13 & 6 & 6 & 6 & 40, 41, 60, 64, 69, 137 \\ \hline 
8 & 15 & 10 & 7 & 7 & 26, $\{28,32\}$, 45, 48, $\{67, 118\}$, $\{123, 125\}$, 124 \\ \hline
8 & 16 & 28 & 23 & 23 & $\{74, 96\}$, 75, $\{83, 108\}$, $\{95, 131\}$\\
& & & & & 105, 106, 112, 114, 130, 142 \\ 
& & & & & $\{29, 39\}$, 33, 34, 37, 38, 47, 59\\
& & & & & 93, 94, 104, $\{111, 116\}$, 115, 126 \\ \hline
8 & 17 & 7 & 6 & 6 or 7 & 36, (50, 57), 58, 61, 65, 110 \\ \hline
8 & 18 & 2 & 2 & 2 & 53, 55 \\ \hline 
9 & 18 & 4 & 4 & 4 & 27, 46, 119, 122 \\ \hline
9 & 20 & 17 & 10 & 10 & 71, $\{73, 76, 92\}$, $\{77, 88\}$, 79, $\{81, 103\}$ \\
& & & & & $\{82, 91, 107\}$, 84, $\{90, 113\}$, 102, 120 \\ \hline
9 & 21 & 4 & 4 & 4 & 51, 52, 56, 89 \\ \hline
9 & 23 & 1 & 1 & 1 & 62 \\ \hline
9 & 24 & 1 & 1 & 1 & 54 \\ \hline
10 & 24 & 8 & 6 & 6 & $\{ 72, 87\}$, $\{78, 86\}$, 80, 85, 101, 121 \\ \hline
10 & 25 & 1 & 1 & 1 & 98 \\ \hline
10 & 30 & 1 & 1 & 1 & 63 \\ \hline
11 & 30 & 1 & 1 & 1 & 99 \\ \hline
12 & 36 & 1 & 1 & 1 & 100 \\ \hline
\multicolumn{2}{|c|}{total} & 124 & 102 & 102 or 103 & \\ \hline
\end{tabular} 
\medskip
\caption{Diffeomorphism classification of toric Fano $4$-folds} \label{table:diff4fold}
\end{table}
}

Our approach to obtain the table above is the same as the case of dimension $3$ but the analysis of the cohomology rings in dimension $4$ becomes much more complicated and there are many more cases to investigate. 

When $V(P)=5, 11, 12$ or $(V(P),F(P))=(9,23), (9,24), (10,25), (10,30)$, there is only one smooth Fano $4$-polytope; so there is nothing to prove in these cases. Moreover, the case $(V(P),F(P))=(10,24)$ is treated in Section~\ref{sec:Picard}. Indeed, toric Fano $4$-folds with ID numbers 72, 78, 80, 85, 86, 87, 101, 121 are respectively $W_5, W_6, W_3, W_4, W_8, W_7, W_2, W_1$ in Subsection~\ref{subsec:3d-2even}. Therefore, it suffices to investigate the remaining cases. We shall carry out this task one by one in this section. 

\medskip
\noindent
{\bf Convention}. 
\begin{enumerate}
\item The vertices of a smooth Fano $4$-polytope $P$ are shown in the database by {\O}bro and we number them as 1, 2, ... in the order shown in the database. 
\item The first four vertices of $P$ are the standard basis of $\ZZ^4$, so we omit them and write the vertices from 5th in the tables below. 
\item Minimal nonfaces of $P$ are described using the numbering of the vertices of $P$.  
\item $\mathcal{I}$ denotes the ideal of the cohomology ring $H^*(X_q)$ and its minimal generators are described in the tables. 
\item s.v.e., c.v.e. and $4$-v.e. in the tables are up to sign unless the coefficient is $\ZZ/2$. 
\item $X_p\cong X_q$ means that $X_p$ is diffeomorphic to $X_q$. 
\item $H^*(X_p)\ncong H^*(X_q)$ means that the cohomology rings are not isomorphic (as graded rings). 
\item The degree two cohomology elements corresponding to $5$th, $6$th, $7$th, $8$th, $9$th vertices are respectively denoted by $x, y, z, u, v$. 
\end{enumerate}

\subsection{The case where $V(P)=6$}

We take two cases according to the values of $F(P)$. 

\subsubsection{$(V(P),F(P))=(6,8)$} 
In this case, there are five smooth Fano $4$-polytopes and they are all combinatorially equivalent to a direct sum of a $3$-simplex and a $1$-simplex, so the corresponding toric Fano $4$-folds are generalized Bott manifolds. Using the data in Table~\ref{tab:VMNF68}, we obtain Table~\ref{d=4v=6f=8} which shows that the five cohomology rings are not isomorphic to each other. 

{\scriptsize
\begin{table}[H]
\centering
\begin{tabular}{|c|c|c|} \hline
ID & vertices of $P$ from $5$th & minimal nonfaces \\ \hline
25 & $(-1,-1,-1,3), (0,0,0,-1))$ & 1235, 46\\ \hline
138 & $(-1,0,0,1), (0,-1,-1,-1)$ & 2346, 15\\ \hline
139 & $(-1,-1,-1,2), (0,0,0,-1)$ &1235, 46 \\ \hline
144 & $(-1,0,0,0), (1,-1,-1,-1)$ & 2346, 15\\ \hline
145 & $(-1,0,0,0), (0,-1,-1,-1)$ & 2346, 15\\ \hline
\end{tabular}
\medskip
\caption{Vertices and minimal nonfaces of $P$ with $(V(P),F(P))=(6,8)$} 
\label{tab:VMNF68}
\end{table}
\begin{table}[h]
\centering
\begin{tabular}{|c|c|c|c|c|c|} \hline
ID & $\mathcal{I}$ & s.v.e. & s.v.e. $\ZZ/2$ & 4-v.e. $\ZZ/2$ & c.v.e. $\ZZ/3$ \\ \hline
25 & $x^4,y(y-3x)$ & $\emptyset$ & $\emptyset$ & $x$ & $y$ \\ \hline
138 & $x^4,y(y-x)$ & $\emptyset$ & $\emptyset$ & $x$ & $\emptyset$ \\ \hline
139 & $x^4,y(y-2x)$ & $\emptyset$ & $y$ &  &  \\ \hline
144 & $x^3(x-y),y^2$ & $y$ & $y$ & $y$ &  \\ \hline
145 & $x^4,y^2$ & $y$ & $y$ & $(x,y)$ &  \\ \hline
\end{tabular}
\medskip
\caption{Ideals and invariants when $(V(P), F(P))=(6,8)$}
\label{d=4v=6f=8}
\end{table}
}


\subsubsection{$(V(P),F(P))=(6,9)$}

In this case, there are four smooth Fano $4$-polytopes and they are all combinatorially equivalent to a direct sum of two $2$-simplices, so the corresponding toric Fano $4$-folds are generalized Bott manifolds.

{\scriptsize
\begin{table}[H]
\centering
\begin{tabular}{|c|c|c|} \hline
ID & vertices of $P$ from $5$th & minimal nonfaces \\ \hline
44 & $(-1,-1,0,2), (0,0,-1,-1)$ & 125, 346\\ \hline
70 & $(-1,-1,1,1), (0,0,-1,-1)$ & 125, 346\\ \hline
141 & $(-1,-1,0,1), (0,0,-1,-1)$ & 125, 346\\ \hline
146 & $(-1,-1,0,0), (0,0,-1,-1)$ & 125, 346\\ \hline
\end{tabular}
\medskip
\caption{Vertices and minimal nonfaces of $P$ with $(V(P),F(P))=(6,9)$} \label{tab:VMNF69}
\end{table}
}

We see that 
\begin{equation} \label{eq:diff69}
X_{70}\cong X_{141},
\end{equation}
see Subsection~\ref{subsec:diffeo}. On the other hand, using the data in Table~\ref{tab:VMNF69}, we obtain Table~\ref{d=4v=6f=9} which shows that the three cohomology rings are not isomorphic to each other.

{\scriptsize
\begin{table}[H]
\centering
\begin{tabular}{|c|c|c|c|} \hline
ID & $\mathcal{I}$ & c.v.e. & c.v.e. $\ZZ/2$ \\ \hline
44 & $x^3,y^2(2x-y)$ & $x$ & $x,y$ \\ \hline
70(141) & $x^3,y(x-y)^2$ & $x$ & $x$ \\ \hline
146 & $x^3,y^3$ & $x,y$ &  \\ \hline
\end{tabular}
\medskip
\caption{Ideals and invariants when $(V(P),F(P))=(6,9)$}
\label{d=4v=6f=9}
\end{table}
}


\subsection{The case where $V(P)=7$}

\subsubsection{$(V(P),F(P))=(7,11)$}
In this case, there are three smooth Fano $4$-polytopes and they are all combinatorially equivalent. 
Using the data in Table~\ref{tab:VMNF711}, we obtain Table~\ref{d=4v=7f=11} which shows that the three cohomology rings are not isomorphic to each other. 

{\scriptsize
\begin{table}[H]
\centering
\begin{tabular}{|c|c|c|} \hline
ID & vertices of $P$ from $5$th & minimal nonfaces \\ \hline
24 & $(-1,-1,-1,3), (0,0,1,-1), (0,0,0,-1)$ & 1235, 1256, 37, 46, 47\\ \hline
127 & $(-1,0,0,1), (1,0,0,-1), (-1,-1,-1,0)$ & 1237, 2347, 15, 46, 56\\ \hline
128 & $(-1,0,0,1), (-1,0,0,0), (2,-1,-1,-1)$ & 2347, 2357, 15, 16, 46\\ \hline
\end{tabular}
\medskip
\caption{Vertices and minimal nonfaces of $P$ with $(V(P),F(P))=(7,11)$} \label{tab:VMNF711}
\end{table}
\begin{table}[H]
\centering
\begin{tabular}{|c|c|c|c|} \hline
ID & $\mathcal{I}$ & s.v.e. & c.v.e. $\ZZ/3$ \\ \hline
24 & $z(x-y),y(y+z-3x),z(z-2x),x^4,x^3y$ & $\emptyset$ & $y+z$ \\ \hline
127 & $x(x+z),y^2,xy,z^4,z^3(y-x)$ & $y$ &  \\ \hline
128 & $x(x+y-2z),y(y-z),y(z-x),z^4,xz^3$ & $\emptyset$ & $\emptyset$ \\ \hline
\end{tabular}
\medskip
\caption{Ideals and invariants when $(V(P),F(P))=(7,11)$}
\label{d=4v=7f=11}
\end{table}
}


\subsubsection{$(V(P),F(P))=(7,12)$}

In this case, there are 19 smooth Fano $4$-polytopes and they are all combinatorially equivalent to a direct sum of $2$-simplex and two $1$-simplices, so the corresponding toric Fano $4$-folds are generalized Bott manifolds.

{\scriptsize
\begin{table}[H]
\begin{tabular}{|c|c|c|} \hline
ID & vertices of $P$ from $5$th & minimal nonfaces \\ \hline
30 & $ (-1,-1,0,2), (0,0,-1,1), (0,0,0,-1)$ & 125, 36, 47\\ \hline
31 & $(-1,-1,0,2), (0,0,1,-1), (0,0,-1,0)$ & 125, 37, 46\\ \hline
35 & $(-1,-1,0,2), (0,1,-1,0), (0,0,0,-1)$ & 125, 36, 47\\ \hline
42& $(-1,-1,0,2), (0,0,-1,0), (0,0,0,-1)$ & 125, 36, 47\\ \hline
43& $(-1,-1,0,2), (0,0,0,-1), (0,0,-1,-1)$ &125, 37, 46 \\ \hline
49& $(-1,-1,1,1), (0,0,-1,1), (0,0,0,-1)$ & 125, 36, 47\\ \hline
66& $(-1,-1,1,1), (0,0,-1,0), (0,0,0,-1)$ & 125, 36, 47\\ \hline
68& $ (-1,-1,1,1), (0,0,-1,0), (0,0,-1,-1)$ &125, 36, 47 \\ \hline
97& $(-1,0,0,1), (0,-1,0,1), (0,0,-1,-1)$ &347, 15, 26 \\ \hline
109& $(-1,0,0,1), (0,-1,1,0), (0,0,-1,-1)$ &347, 15, 26\\ \hline
117& $(-1,0,0,1), (0,0,1,-1), (0,-1,-1,0)$ &237, 15, 46\\ \hline
129& $(-1,0,0,1), (0,-1,-1,1), (0,0,0,-1)$ &236, 15, 47\\ \hline
132& $ (-1,0,0,1), (0,-1,0,0), (0,1,-1,-1)$ &347, 15, 26\\ \hline
133& $(-1,0,0,1), (0,-1,0,0), (0,0,-1,-1)$ &347, 15, 26\\ \hline
134& $(-1,0,0,1), (0,0,0,-1), (1,-1,-1,0)$ &237, 15, 46 \\ \hline
135& $(-1,0,0,1), (0,0,0,-1), (0,-1,-1,0)$ &237, 15, 46\\ \hline
136& $(-1,0,0,1), (0,0,0,-1), (0,-1,-1,-1)$ &237, 15, 46\\ \hline
140& $(-1,-1,0,1), (0,0,-1,0), (0,0,0,-1)$ &125, 36, 47\\ \hline
143& $(-1,0,0,0), (0,-1,0,0), (0,0,-1,-1)$ &347, 15, 26\\ \hline
\end{tabular}
\medskip
\caption{Vertices and minimal nonfaces of $P$ with $(V(P),F(P))=(7,12)$} \label{tab:VMNF712}
\end{table}
}

We see that 
\begin{equation} \label{eq:diff712}
X_{30}\cong X_{43}, \qquad X_{68}\cong X_{134},\qquad X_{129}\cong X_{136},
\end{equation} 
see Subsection~\ref{subsec:diffeo}. Using the data in Table~\ref{tab:VMNF712}, we obtain the following table. 

{\scriptsize
\begin{table}[H]
\centering
\begin{tabular}{|c|c|c|c|c|c|c|c|c|} \hline
ID & $\mathcal{I}$ & s.v.e. & s.v.e. $\ZZ/2$ & $4$-v.e. $\ZZ/2$ & c.v.e. $\ZZ/3$ & c.v.e. $\ZZ/2$ \\ \hline
\begin{tabular}{c}30\\
(43)\end{tabular} & $x^3,y^2,z(2x+y-z)$ & $y$ & $y$ & all & $(x,y)$ & \begin{tabular}{c}$x,y,z,$\\
$y+z$\end{tabular} \\ \hline
31 & $x^3,y(2x-y),z(y-z)$ & $\emptyset$ & $y$ & & & \\ \hline
35 & $x^2(x-y),y^2,z(2x-z)$& $y$ & $(y,z)$ & all & $y$ & \\ \hline
42 & $x^3,y^2,z(2x-z)$ & $y$ & $(y,z)$ & all & $(x,y)$ & \\ \hline
49 & $x^3,y(x-y),z(x+y-z)$ & $\emptyset$ & $\emptyset$ & $(x,y)$ & $x$ & $x$ \\ \hline
66 & $x^3,y(x-y),z(x-z)$ & $\emptyset$ & $\emptyset$ & all & & \\ \hline
\begin{tabular}{c}68\\
(134)\end{tabular} & $x^3,y(x-y-z),z(x-z)$& $\emptyset$ & $\emptyset$ & $(x,z)$ & $x$ & $x$ \\ \hline
97 & $x^2,y^2,z^2(x+y-z)$ & $x,y$ & $(x,y)$ & all & $(x,y)$ &\begin{tabular}{c} $x,y,x+y,$\\
$x+y+z$ \end{tabular} \\ \hline
109 & $x^2,y^2,z(x-z)(y-z)$ & $x,y$ & $(x,y)$ & $(x,y)$ & &\\ \hline
117 & $x^2, y(x-y),z^2(y-z)$ & $x,x-2y$ & $x$ & $(x,y)$ & & \\ \hline
\begin{tabular}{c}129\\
(136)\end{tabular} & $x^2,y^3,z(x+y-z)$ & $x$ & $x$ & $(x,y)$ & $(x,y)$ & \\ \hline
132 & $x^2,y(y-z),z^2(x-z)$ & $x$ & $x$ & $(x,z)$ & $x$ & \\ \hline
133 & $x^2,y^2,z^2(x-z)$ & $x,y$ & $(x,y)$ & all & $(x,y)$ & \begin{tabular}{c}$x,y,x+y$\\
$x+z$\end{tabular}\\ \hline
135 & $x^2,y(x-y),z^3$ & $x,x-2y$ & $x$ & all & & \\ \hline
140 & $x^3,y^2,z(x-z)$ & $y$ & $y$ & all & $(x,y)$ & $x,y$ \\ \hline
143 & $x^2,y^2,z^3$ & $x,y$ & $(x,y)$ & all & all & \\ \hline
\end{tabular}
\medskip
\caption{Ideals and invariants when $(V(P), F(P))=(7,12)$}
\label{d=4v=7f=12}
\end{table}
}

Table~\ref{d=4v=7f=12} shows that the cohomology rings in the table are distinguished by the invariants in the table except for two pairs: $49$ and $68(134)$, $97$ and $133$. We shall prove that their cohomology rings are not isomorphic to each other. 

The c.v.e. of $H^*(X_{49})$ and $H^*(X_{68})$ are both $x$ up to sign. Therefore, if there is an isomorphism $H^*(X_{49}) \rightarrow H^*(X_{68})$, then it induces an isomorphism $H^*(X_{49})/(x^2) \rightarrow H^*(X_{68})/(x^2)$. However, Table~\ref{4968} shows that this does not occur, so $H^*(X_{49})\not\cong H^*(X_{68})$. 

{\scriptsize
\begin{table}[H]
\centering
\begin{tabular}{|c|c|c|c|} \hline
ID & $\mathcal{I}+(x^2)$ & c.v.e. $\ZZ/3$ \\ \hline
49 & $x^2,y(x-y),z(x+y-z)$ & all \\ \hline
68(134) & $x^2,y(x-y-z), z(x-z)$ & $(x,z)$ \\ \hline
\end{tabular}
\medskip
\caption{Distinguishment between $H^*(X_{49})$ and $H^*(X_{68})$}
\label{4968}
\end{table}
}

The s.v.e. of $H^*(X_{97})$ and $H^*(X_{133})$ are both $x,y$ up to sign and the transposition of $x$ and $y$ induces an automorphism of $H^*(X_{97})$ since the ideal $\mathcal{I}$ of $H^*(X_{97})$ is invariant under the transposition. Therefore, if there is an isomorphism $F\colon H^*(X_{97})\to H^*(X_{133})$, then we may assume that $F(x)=\pm x$, so that $F$ induces an isomorphism $: H^*(X_{97})/(x) \rightarrow H^*(X_{133})/(x)$. However, Table~\ref{97133} shows that this does not occur, so $H^*(X_{97})\not\cong H^*(X_{133})$. 

{\scriptsize
\begin{table}[H]
\centering
\begin{tabular}{|c|c|c|c|} \hline
ID & $\mathcal{I}+(x)$ & c.v.e. $\ZZ/3$ \\ \hline
97& $y^2, z^2(y-z)$ & $y$ \\ \hline
133 & $y^2,z^3$ & all \\ \hline
\end{tabular}
\medskip
\caption{Distinguishment between $H^*(X_{49})$ and $H^*(X_{68})$}
\label{97133}
\end{table}
}


\subsubsection{$(V(P),F(P))=(7,13)$}

In this case, there are six smooth Fano $4$-polytopes. 
We shall prove that these six cohomology rings are not isomorphic to each other. 

{\scriptsize
\begin{table}[H]
\centering
\begin{tabular}{|c|c|c|} \hline
ID & vertices of $P$ from $5$th & minimal nonfaces \\ \hline
40 & $ (-1,-1,0,2), (0,1,0,-1), (0,0,-1,-1) $ & 125, 156, 237, 347, 46\\ \hline
41 & $(-1,-1,0,2), (1,1,-1,-1), (0,0,0,-1) $ & 125, 127, 346, 356, 47\\ \hline
60 & $(-1,-1,1,1), (0,1,-1,0), (0,0,-1,-1) $ & 125, 156, 247, 347, 36\\ \hline
64 & $(-1,-1,1,1), (1,1,-1,-1), (-1,-1,0,0) $ & 125, 127, 346, 347, 56\\ \hline
69 & $(-1,-1,1,1), (0,1,-1,-1), (0,0,-1,-1) $ & 125, 156, 346, 347, 27\\ \hline
137& $(-1,0,0,1), (1,-1,0,-1), (-1,0,-1,0) $ & 137, 246, 256, 347, 15\\ \hline
\end{tabular}
\medskip
\caption{Vertices and minimal nonfaces of $P$ with $(V(P),F(P))=(7,13)$} \label{tab:VMNF713}
\end{table}
}

Using the data in Table~\ref{tab:VMNF713}, we obtain the following table. 

{\scriptsize
\begin{table}[H]
\centering
\begin{tabular}{|c|c|c|c|} \hline
ID & $\mathcal{I}$ & c.v.e. & c.v.e. $\ZZ/2$ \\ \hline
40 & $x^3,x^2y,z^2(x-y),z^2(x-z), y(2x-y-z)$ & $x$ & $x,y+z$ \\ \hline
41 & $x(x-y)^2,y(x-y)^2,y^2(y+z),xy^2, z(2x-y-z)$ & $\emptyset$ & \\ \hline
60 & $x^3,x^2y,z(x-y)(x-z),z^2(x-z), y(x-y-z)$ & $x$ & $x$ \\ \hline
64 & $x(x+z)^2,z(x-y+z)^2,y^3,z(x-y)^2, xy$ & $y$ & $y,z$ \\ \hline
69 & $x^3,x^2y,y(x-y-z)^2,z^3, z(x-y)$ & $x,z$ & $x,z$ \\ \hline
137 & $z^3,y^3,xy^2,z^2(x-y), x(x-y+z)$ & $y,z$ & $y,z$\\ \hline
\end{tabular}
\medskip
\caption{Ideals and invariants when $(V(P),F(P))=(7,13)$}
\label{d=4v=7f=13}
\end{table}
}

Table~\ref{d=4v=7f=13} shows that the six cohomology rings can be distinguished by the invariants in the table except for two pairs: $40$ and $64$, $69$ and $137$. We shall prove that their cohomology rings are not isomorphic to each other. 

The c.v.e. of $H^*(X_{40})$ and $H^*(X_{64})$ are respectively $x$ aand $y$ up to sign. Therefore, if there is an isomorphism $H^*(X_{40})\to H^*(X_{64})$, then it induces an isomorphism $H^*(X_{40})/(x)\to H^*(X_{64})/(y)$. However, this does not occur because 
\[
\begin{split}
H^*(X_{40})/(x)&=\ZZ[y,z]/(yz^2, z^3, y(y+z)),\\
H^*(X_{64})/(y)&=\ZZ[x,z]/(x(x+z)^2,z(x+z)^2, x^2z),
 \end{split}
\]
and the degree sequences of these ideals are different. Therefore $H^*(X_{40})\not\cong H^*(X_{64})$. 

Similarly, the c.v.e. of $H^*(X_{69})$ and $H^*(X_{137})$ are respectively $x,z$ and $y,z$ up to sign.  Therefore, if there is an isomorphism $H^*(X_{69})\to H^*(X_{137})$, then it induces an isomorphism $H^*(X_{69})/(x,z)\to H^*(X_{137})/(y,z)$. However, this does not occur because  
\[
H^*(X_{69})/(x,z)=\ZZ[y]/(y^3),\qquad H^*(X_{137})/(y,z)=\ZZ[x]/(x^2),
\]
and these quotient rings are not isomorphic. Therefore $H^*(X_{69})\not\cong H^*(X_{137})$.


\subsection{The case where $V(P)=8$} 

\subsubsection{$(V(P),F(P))=(8,15)$}

In this case, there are ten smooth Fano $4$-polytopes and they are all combinatorially equivalent to a direct sum of a $5$-gon and a $2$-simplex.

{\scriptsize
\begin{table}[H]
\centering
\begin{tabular}{|c|c|c|} \hline
ID & vertices of $P$ from $5$th & minimal nonfaces \\ \hline
26 & $(-1,-1,0,2), (0,0,-1,1), (0,0,1,-1), (0,0,-1,0)$ & 125, 36, 38, 47, 48, 67\\ \hline
28 & $(-1,-1,0,2), (0,0,-1,1), (0,0,1,-1), (0,0,0,-1)$ &125, 36, 38, 47, 48, 67 \\ \hline
32 & $(-1,-1,0,2), (0,0,1,-1), (0,0,-1,0), (0,0,0,-1)$ & 125, 37, 38, 46, 48, 67\\ \hline
45 & $ (-1,-1,1,1), (0,0,-1,1), (0,0,1,-1), (0,0,-1,0)$ & 125, 36, 38, 47, 48, 67\\ \hline
48 & $(-1,-1,1,1), (0,0,-1,1), (0,0,-1,0), (0,0,0,-1)$ & 125, 36, 37, 47, 48, 68\\ \hline
67 & $(-1,-1,1,1), (0,0,-1,0), (0,0,0,-1), (0,0,-1,-1)$ & 125, 36, 38, 47, 48, 67\\ \hline
118 & $(-1,0,0,1), (1,0,0,-1), (-1,0,0,0), (0,-1,-1,1)$ & 238, 15, 17, 46, 47, 56\\ \hline
123 & $ (-1,0,0,1), (1,0,0,-1), (-1,0,0,0), (1,-1,-1,0)$ & 238, 15, 17, 46, 47, 56\\ \hline
124 & $(-1,0,0,1), (1,0,0,-1), (-1,0,0,0), (0,-1,-1,0)$ & 238, 15, 17, 46, 47, 56\\ \hline
125 & $(-1,0,0,1), (1,0,0,-1), (-1,0,0,0), (1,-1,-1,-1)$ & 238, 15, 17, 46, 47, 56\\ \hline
\end{tabular}
\medskip
\caption{Vertices and minimal nonfaces of $P$ with $(V(P),F(P))=(8,15)$} \label{tab:VMNF815}
\end{table}
}

We see that 
\begin{equation} \label{eq:diff815}
X_{28}\cong X_{32},\qquad X_{67}\cong X_{118}, \qquad X_{123}\cong X_{125},
\end{equation}
see Subsection~\ref{subsec:diffeo}. We obtain the following table from Table~\ref{tab:VMNF815}. 

{\scriptsize
\begin{table}[H]
\centering
\begin{tabular}{|c|c|c|c|c|c|} \hline
ID & $\mathcal{I}$ & s.v.e. & s.v.e. $\ZZ/2$ & c.v.e. \\ \hline
26 & \begin{tabular}{c}$x^3, y(y+u),u(y-z+u),$\\
$z(2x-z),u(2x-u),yz$\end{tabular} & $\emptyset$ & $(z,u)$ &\\ \hline
\begin{tabular}{c} 28\\
(32)\end{tabular} &\begin{tabular}{c} $x^3, y^2,u(y-z),$\\
$z(2x-z-u),u(2x-u),yz$\end{tabular} & $y$ & $(y,u)$ & \\ \hline
45 & \begin{tabular}{c}$x^3, y(x-y-u),u(2x-u),$\\
$x(x-z), u(x+y-z),yz$\end{tabular} & $\emptyset$ & $u$ & \\ \hline
48 & \begin{tabular}{c}$x^3, y(x-y-z),z(x-y-z),$\\
$z(x+y-u),u(x-u),yu$\end{tabular} & $\emptyset$ & $\emptyset$ & $x$ \\ \hline
\begin{tabular}{c}67\\
(118)\end{tabular} & \begin{tabular}{c}$x^3, y(x-y-u),u(x-y-u),$\\
$z(x-z-u),u(y-z),yz$\end{tabular} & $\emptyset$ & $\emptyset$ & $x$ \\ \hline
\begin{tabular}{c}123\\
(125)\end{tabular} & \begin{tabular}{c}$u^3, x(x+z-u),z(z-u),$\\
$y^2,z(x-y),xy$\end{tabular} & $y$ & $y$ & \\ \hline
124 & $u^3, x(x+z),z^2,y^2,z(x-y),xy$ & $\infty$ & &\\ \hline
\end{tabular}
\medskip
\caption{Ideals and invariants when $(V(P),F(P))=(8,15)$}\label{d=4v=8f=15}
\end{table}
}

Tables~\ref{d=4v=8f=15} shows that the seven cohomology rings in the table can be distinguished by the invariants in the table except for $48$ and $67(118)$. The c.v.e. of $H^*(X_{48})$ and $H^*(X_{67})$ are both $x$ up to sign. Therefore, if there is an isomorphism $H^*(X_{48})\to H^*(X_{67})$, then it induces an isomorphism $H^*(X_{48})/(x^2)\to H^*(X_{67})/(x^2)$. However, Table~\ref{4867} shows that this does not occur, so $H^*(X_{48})\not\cong H^*(X_{67})$. 

{\scriptsize
\begin{table}[H]
\begin{tabular}{|c|c|c|} \hline
ID & $\mathcal{I}+(x^2)$ & c.v.e. $\ZZ/3$ \\ \hline
48 & $x^2, y(x-y-z),z(x-y-z),z(x+y-u),u(x-u),yu$ & $(x,y,z,u)$ \\ \hline
67(118) & $x^2, (y(x-y-u),u(x-y-u),z(x-z-u),u(x-z-u),yz$ & $(x,y+u,z+u)$ \\ \hline
\end{tabular}
\medskip
\caption{Distinguishment between $H^*(X_{48})$ and $H^*(X_{67})$}\label{4867}
\end{table}
}

\subsubsection{$(V(P),F(P))=(8,16)$}

In this case, there are 28 smooth Fano $4$-polytopes and there are two combinatorial types among them. Indeed, 13 polytopes among them have the same combinatorial type as a cross-polytope as shown in Table~\ref{tab:VMNF816_1}. The corresponding toric Fano $4$-folds are Bott manifolds. 
The cohomology rings associated to these two different combinatorial types are not isomorphic to each other because the degree sequences of their ideals are different, see Tables~\ref{d=4v=8f=16Bott} and~\ref{d=4v=8f=16}.  More generally, it is known that a toric manifold which has the same cohomology ring as a Bott manifold is indeed a Bott manifold (\cite{MP08}).  

{\scriptsize
\begin{table}[H]
\begin{tabular}{|c|c|c|} \hline
ID & vertices of $P$ from $5$th & minimal nonfaces \\ \hline 
74 & $(-1,0,0,1), (0,-1,0,1), (0,0,-1,1), (0,0,0,-1)$ &15, 26, 37, 48 \\ \hline
75& $(-1,0,0,1), (0,-1,0,1), (0,0,1,-1), (0,0,-1,0)$ & 15, 26, 38, 47\\ \hline
83 & $(-1,0,0,1), (0,-1,0,1), (0,1,-1,0), (0,0,0,-1)$ &15, 26, 37, 48 \\ \hline
95 & $(-1,0,0,1), (0,-1,0,1), (0,0,-1,0), (0,0,0,-1)$ & 15, 26, 37, 48\\ \hline
96 & $(-1,0,0,1), (0,-1,0,1), (0,0,0,-1), (0,0,-1,-1)$ & 15, 26, 38, 47\\ \hline
105 & $(-1,0,0,1), (0,-1,1,0), (0,1,0,-1), (0,0,-1,0)$ & 15, 26, 38, 47\\ \hline
106 & $(-1,0,0,1), (0,-1,1,0), (0,0,-1,0), (0,0,0,-1)$ & 15, 26, 37, 48\\ \hline
108 & $ (-1,0,0,1), (0,-1,1,0), (0,0,-1,0), (0,0,-1,-1)$ & 15, 26, 37, 48\\ \hline
112 & $(-1,0,0,1), (0,0,1,-1), (0,-1,0,0), (0,0,-1,0)$ & 15, 27, 38, 46\\ \hline
114 & $(-1,0,0,1), (0,0,1,-1), (0,-1,1,-1), (0,0,-1,0)$ & 15, 27, 38, 46\\ \hline
130 & $(-1,0,0,1), (0,-1,0,0), (0,0,-1,0), (0,0,0,-1)$ & 15, 26, 37, 48\\ \hline
131 & $(-1,0,0,1), (0,-1,0,0), (0,0,0,-1), (0,0,-1,-1)$ & 15, 26, 38, 47\\ \hline
142 & $(-1,0,0,0), (0,-1,0,0), (0,0,-1,0), (0,0,0,-1)$ & 15, 26, 37, 48\\ \hline
\end{tabular}
\medskip
\caption{Vertices and minimal nonfaces of $P$ with $(V(P),F(P))=(8,16)$, cross-polytope} \label{tab:VMNF816_1}
\end{table}
}

One can see that 
\begin{equation} \label{eq:diff816-1}
X_{74}\cong X_{96}, \qquad X_{83}\cong X_{108}, \qquad X_{95}\cong X_{131},
\end{equation}
see Subsection~\ref{subsec:diffeo}. Using the data in Table~\ref{tab:VMNF816_1}, we obtain the following table. 

{\scriptsize
\begin{table}[H]
\centering
\begin{tabular}{|c|c|c|c|c|} \hline
ID & $\mathcal{I}$ & s.v.e. & $4$-v.e. $\ZZ/2$ \\ \hline
74(96) & $x^2,y^2,z^2,u(x+y+z-u)$ & $x,y,z$ & all \\ \hline
75 & $x^2,y^2,u(z-u),z(x+y-z)$ & $x,y$ & all \\ \hline
83(108)& $x^2,y(y-z),z^2,u(x+y-u)$ & $x,z,z-2y$ & $(x,y,z)$ \\ \hline
95(131)& $x^2,y^2,z^2,u(x+y-u)$ & $x,y,z$ & all \\ \hline
105& $x^2,y(y-z),u(y-u),z(x-z)$ & $x,x-2z$ & $(x,y,z)$ \\ \hline
106& $x^2,y^2,z(y-z),u(x-u)$ & $x,y,x-2u,y-2z$ &  \\ \hline
112& $x^2,z^2,u(y-u),y(x-y)$ & $x,z,x-2y$ & all \\ \hline
114& $x^2,z^2,u(y+z-u),y(x-y-z)$ & $x,z$ & $(x,y,z)$ \\ \hline
130& $x^2,y^2,z^2,u(x-u)$ & $x,y,z,x-2u$ &  \\ \hline
142& $x^2,y^2,z^2,u^2$ & $x,y,z,u$ &  \\ \hline
\end{tabular}
\medskip
\caption{Ideals and invariants when $(V(P),F(P))=(8,16)$, cross-polytope}\label{d=4v=8f=16Bott}
\end{table}
}

Table~\ref{d=4v=8f=16Bott} shows that the cohomology rings in the table can be distinguished by the invariants in the table except for $74(96)$ and $95$.

Suppose that there is an isomorphism $F : H^*(X_{74}) \rightarrow H^*(X_{95})$. Since the s.v.e. of $H^*(X_{74})$ and $H^*(X_{95})$ are $x,y,z$ and any permutation of $x,y,z$ is an automorphism of $H^*(X_{74})$, we may assume that $F(x) = \pm x$, $F(y) = \pm y$, $F(z) = \pm z$. Therefore, $F$ induces an isomorphism $H^*(X_{74})/(x,y)\cong H^*(X_{95})/(x,y)$. However, this does not occur because 
\[
H^*(X_{74})/(x,y)=\ZZ[z,u]/(z^2,u(u-z)),\qquad H^*(X_{95})=\ZZ[z,u]/(z^2,u^2)
\]
and these two rings are not isomorphic (e.g. their s.v.e. are different). Therefore, $H^*(X_{74})\not\cong H^*(X_{95})$.


Next, we shall treat the other case where $P$ is not combinatorially equivalent to a cross-polytope. There are 15 smooth Fano $4$-polytopes in this case as shown in Table~\ref{tab:VMNF816_2}. 


{\scriptsize
\begin{table}[H]
\centering
\begin{tabular}{|c|c|c|} \hline
ID & vertices of $P$ from $5$th & minimal nonfaces \\ \hline
29 & $(-1,-1,0,2), (0,0,-1,1), (0,1,0,-1), (0,0,0,-1)$ & 125, 157, 28, 36, 47, 48\\ \hline
33 & $(-1,-1,0,2), (0,1,-1,0), (0,1,0,-1), (0,0,0,-1)$ & 125, 157, 28, 36, 47, 48\\ \hline
34 & $(-1,-1,0,2), (0,1,-1,0), (1,0,0,-1), (0,0,0,-1)$ & 125, 257, 18, 36, 47, 48 \\ \hline
37 & $(-1,-1,0,2), (0,1,0,-1), (0,0,-1,0), (0,0,0,-1)$ & 125, 156, 28, 37, 46, 48\\ \hline
38 & $(-1,-1,0,2), (0,1,0,-1), (0,0,0,-1), (0,1,-1,-1)$ & 125, 156, 27, 38, 46, 47\\ \hline
39 & $(-1,-1,0,2), (0,1,0,-1), (0,0,0,-1), (0,0,-1,-1)$ & 125, 156, 27, 38, 46, 47\\ \hline
47 & $(-1,-1,1,1), (0,0,-1,1), (0,1,0,-1), (0,0,0,-1)$ & 125, 157, 28, 36, 47, 48\\ \hline
59 & $ (-1,-1,1,1), (0,1,-1,0), (0,0,-1,0), (0,0,0,-1)$ & 125, 156, 27, 36, 37, 48\\ \hline 
93 & $(-1,0,0,1), (0,-1,0,1), (0,1,0,-1), (0,-1,-1,0)$ & 238, 348, 15, 26, 47, 67\\ \hline
94 & $(-1,0,0,1), (0,-1,0,1), (0,1,0,-1), (0,0,-1,-1)$ & 238, 348, 15, 26, 47, 67\\ \hline
104 & $(-1,0,0,1), (0,-1,1,0), (0,1,-1,0), (0,-1,0,-1)$ & 248, 348, 15, 26, 37, 67\\ \hline
111 & $(-1,0,0,1), (0,0,1,-1), (0,-1,-1,1), (0,0,0,-1)$ & 237, 267, 15, 38, 46, 48\\ \hline
115 & $(-1,0,0,1), (0,0,1,-1), (0,0,0,-1), (1,-1,-1,0)$ & 238, 268, 15, 37, 46, 47\\ \hline
116 & $(-1,0,0,1), (0,0,1,-1), (0,0,0,-1), (0,-1,-1,0)$ & 238, 268, 15, 37, 46, 47\\ \hline
126 & $(-1,0,0,1), (1,0,0,-1), (0,-1,0,0), (-1,0,-1,0)$ & 138, 348, 15, 27, 46, 56\\ \hline 
\end{tabular}
\medskip
\caption{Vertices and minimal nonfaces of $P$ with $(V(P),F(P))=(8,16)$, non-cross-polytope } \label{tab:VMNF816_2}
\end{table}
}

We see that 
\begin{equation} \label{eq:diff816-2}
X_{29}\cong X_{39},\qquad X_{111}\cong X_{116},
\end{equation}
see Subsection~\ref{subsec:diffeo}. Using the data in Table~\ref{tab:VMNF816_2}, we obtain the following table. 

{\scriptsize
\begin{table}[H]
\centering
\begin{tabular}{|c|c|c|c|c|c|} \hline
ID & $\mathcal{I}$ & s.v.e. &s.v.e.$\ZZ/2$& $4$-v.e.$\ZZ/2$ & c.v.e. $\ZZ/3$ \\ \hline
\begin{tabular}{c}29\\
(39)\end{tabular} & \begin{tabular}{c}
$x^3,x^2z, u(x-z),y^2$,\\
$z(2x+y-z-u),u(x+y-u)$ 
\end{tabular} & $y$ &$y$ & $(x,y,z+u)$ & $(x,y)$ \\ \hline
33 & \begin{tabular}{c}
$x^2(x-y),x^2z,u(x-y-z),y^2,$\\
$z(2x-z-u),u(2x-z-u)$
\end{tabular}
& $y$ & $(y,z+u)$ & all & $y$ \\ \hline
34 & \begin{tabular}{c}
$x^2(x-y),xz(x-y), u(x-z),$\\
$y^2,z(2x-z-u),u(x-u)$ 
\end{tabular}
& $y$ & $(y,z+u)$ & $(x,y,z+u)$ & \\ \hline
37 & \begin{tabular}{c}
$x^3,x^2y, u(x-y),z^2,$\\
$y(2x-y-u),u(x-u)$ 
\end{tabular}
& $z$ & $(y+u,z)$ & all & $(x,y,z)$\\ \hline
38 & \begin{tabular}{c}
$x^2(x-u),x^2y, z(x-y-u),u^2,$\\
$y(2x-y-z-u),z(x-z)$ 
\end{tabular}& $u$ & $u$ & $(x,y+z,u)$ & $u$\\ \hline
47 & \begin{tabular}{c} $x^3,x^2z, u(x-z),y(x-y),$\\
$z(x+y-z-u),u(y-u)$ \end{tabular}
& $\emptyset$& $\emptyset$ & $(x,y,z)$ & $x$\\ \hline
59 & \begin{tabular}{c}
$x^3,x^2y, z(x-y),y(x-y-z),$\\
$z^2,u(x-u)$ 
\end{tabular} & $z$& $z$ & all & \\ \hline
93 & \begin{tabular}{c} $u^2(y-z+u),u^2(x-u),$\\
$ x^2,y(y+u),z(x-z),yz$ 
\end{tabular} & $x,x-2z$ & $x$ & $(x,z,u)$ & $(x,z)$\\ \hline
94 & \begin{tabular}{c}$u^2(y-z),u^2(x-u), x^2,$\\
$y^2,z(x-z-u),yz$
\end{tabular} & $x,y$& $(x,y)$ & all & $(x,y)$\\ \hline
104& \begin{tabular}{c} $u^2(x-u),u(y-z)(x-u), $\\
$x^2,y(y+u),z^2,yz$
\end{tabular} & $x,z$& $(x,z)$ & $(x,z,u)$ & \\ \hline
\begin{tabular}{c}111\\
(116)\end{tabular}& \begin{tabular}{c}$z^3,yz^2, x^2,u(y-z),$\\
$y(x-y+z-u),u(x-u)$ 
\end{tabular} & $x, x-2u$ & $x$ & $(x,z,u)$ & $(x,z,u)$\\ \hline
115& \begin{tabular}{c}$u^3,yu^2, x(x-u),z(y-u),$\\
$y(x-y-z),z(x-y-z)$
\end{tabular} & $\emptyset$& $\emptyset$ & $(x,y,u)$ & $u$\\ \hline
126& \begin{tabular}{c}$u^3,u^2(x-y), x(x+u),$\\
$z^2,y^2,xy$
\end{tabular} & $y,z$& $(y,z)$ & all & $(y,z,u)$\\ \hline
\end{tabular}
\medskip
\caption{Ideals and invariants when $(V(P), F(P))=(8,16)$, non-cross-polytope}\label{d=4v=8f=16}
\end{table}
}

Table~\ref{d=4v=8f=16} shows that the cohomology rings in the table are distinguished by the invariants in the table except for $47$ and $115$. The c.v.e. of $H^*(X_{47})$ and $H^*(X_{115})$ are respectively $x$ and $u$ up to sign. Therefore, if there is an isomorphism $H^*(X_{47})\to H^*(X_{115})$, then it induces an isomorphism $H^*(X_{47})/(x^2)\to H^*(X_{115})/(u^2)$. However, Table~\ref{47115} shows that this does not occur, so $H^*(X_{47}) \not\cong H^*(X_{115})$. 

{\scriptsize
\begin{table}[H]
\begin{tabular}{|c|c|c|c|} \hline
ID & $\mathcal{I}+((\text{c.v.e.})^2)$ & c.v.e. $\ZZ/3$ \\ \hline
47 & $x^2, u(x-z),y(x-y),z(x+y-z-u),u(y-u)$ & $(x,y,z+u)$\\ \hline
115 & $u^2, x(x-u),z(y-u),y(x-y-z),z(x-y-z)$ & $(x,u)$\\ \hline
\end{tabular}
\medskip
\caption{Distinguishment between $H^*(X_{47})$ and $H^*(X_{115})$}\label{47115}
\end{table}
}

\subsubsection{$(V(P),F(P))=(8,17)$}

There are seven smooth Fano $4$-polytopes in this case and there are two combinatorial types among them: one is ID numbers $36, 65$ and the other one is ID numbers $50, 57, 58, 61, 110$, see Table~\ref{tab:VMNF817}. One can distinguish the cohomology rings of the former class between those of the latter class by the degree sequences of the ideals.  Therefore, Table~\ref{d=4v=8f=17} shows that the seven cohomology rings can be distinguished by the degree sequences of the ideals and c.v.e. over $\ZZ/2$ except for 50 and 57.  

The fans of $X_{50}$ and $X_{57}$ do not satisfy the condition in Lemma~\ref{diffeolemma} up to unimodular equivalence, so they are not \emph{weakly equivariantly} diffeomorphic with respect to the restricted $(S^1)^4$-actions. However, their cohomology rings are isomorphic to each other.  Indeed, the map 
\begin{equation} \label{eq:iso5057}
(x,y,z,u)\to (-x+2u,\ -y+u,\ u,\ -z)
\end{equation}
gives an isomorphism $H^*(X_{50})\to H^*(X_{57})$.  It does not preserve their total Chern classes (even their first Chern classes) but it does preserve their total Pontryagin classes (in fact, the first and second Pontryagin classes because of dimensional reason) which  are given by 
{\small
\[
\begin{split}
p(X_{50})&=(1+x^2)^2(1+(x-y-z+u)^2)(1+(x-y)^2)(1+(x-z)^2)(1+y^2)(1+z^2)(1+u^2),\\ 
p(X_{57})&=(1+x^2)^2(1+(x-y+u)^2)(1+(x-y)^2)(1+(x-z)^2)(1+y^2)(1+z^2)(1+u^2),
\end{split}
\]
}
see Remark~\ref{rem:pont}.

{\scriptsize
\begin{table}[H]
\centering
\begin{tabular}{|c|c|c|} \hline
ID & vertices of $P$ from $5$th & minimal nonfaces \\ \hline
36 & $(-1,-1,0,2), (0,1,0,-1),(0,-1,-1,1), (0,0,0,-1)$ & 28, 46, 48, 125, 156, 158, 237, 347, 367\\ \hline
50 & $(-1,-1,1,1), (0,1,-1,0),(0,1,0,-1), (0,-1,0,0)$ & 28, 36, 47, 125, 156, 157, 348 \\ \hline
57 & $(-1,-1,1,1), (0,1,-1,0),(0,-1,0,0), (0,0,0,-1)$ & 27, 36, 48, 125, 156, 158, 347\\ \hline
58 & $(-1,-1,1,1), (0,1,-1,0),(0,-1,0,0), (0,1,-1,-1)$ & 27, 36, 48, 125, 156, 158, 347 \\ \hline
61 & $(-1,-1,1,1), (1,1,-1,-1),(-1,0,0,0), (0,-1,0,0)$ & 17, 28, 56, 125, 346, 347, 348 \\ \hline
65 & $(-1,-1,1,1), (1,1,-1,-1),(-1,-1,0,0), (0,0,-1,-1)$ & 56, 58, 67, 125, 127, 128, 346, 347, 348\\ \hline
110& $(-1,0,0,1), (0,0,1,-1),(1,0,-1,0), (-1,-1,0,0)$ & 15, 37, 46, 128, 238, 248, 567 \\ \hline
\end{tabular}
\medskip
\caption{$(V(P),F(P))=(8,17)$} \label{tab:VMNF817} 
\end{table}
\begin{table}[H]
\centering
\begin{tabular}{|c|c|c|} \hline
ID & $\mathcal{I}$ & c.v.e. $\ZZ/2$ \\ \hline
36 & $(x-y+z)u,(2x-y+z-u)y,(x-u)u,x^2(x+z)$, & $x+z,(y+z,u)$ \\
& $x^2y,x^2u,(x+z)z^2,z^2(x-u),z^2y$ & \\ \hline
65 & $xy,xu,yz,(x+z)^2x,(x+z)^2z$, & $z+u,y+u,x+z$ \\
& $(y-z)^2u,(x-y-u)^2y,(x-u)^2z,(y+u)^2u$ & \\ \hline\hline
50 & $(x-y-z+u)u,(x-y)y,(x-z)z$, & $(y,z),x+u$ \\
& $x^2(x+u),x^2y,x^2z,(x-y)(x-z)u$ & \\ \hline
57 & $(x-y+z)z,(x-y)y,(x-u)u$, & $(y,u),x+z$ \\
& $x^2(x+z),x^2y,x^2u,(x-y)(x-u)z$ & \\ \hline
58 & $z(x-y+z-u),y(x-y-u),u(x-u)$, & $z,u,y+u$ \\
& $x^2(x+z),x^2y,x^2u,z(x-y)(x-u)$ & \\ \hline
61 & $z(x-y+z),u(x-y+u),xy$, & $(y,z,u)$ \\
& $x(x+z)(x+u),y^3,z^3,u^3$ & \\ \hline
110 & $(x-z+u)x,(y-z)z,(x-y)y$, & $z,u$ \\
& $u^3,u^2(y-z),u^2(x-y),xyz$ & \\ \hline
\end{tabular}
\medskip
\caption{Ideals and invariants when $(V(P), F(P))=(8,17)$}
\label{d=4v=8f=17}
\end{table}
}


\subsubsection{$(V(P),F(P))=(8,18)$}

In this case, there are two smooth Fano polytopes as shown in Table~\ref{tab:VMNF818}. 
Their cohomology rings can be distinguished by c.v.e. over $\ZZ/3$ as shown in the following table. 

{\scriptsize
\begin{table}[H]
\centering
\begin{tabular}{|c|c|c|} \hline
ID & vertices of $P$ from $5$th & minimal nonfaces \\ \hline
53 & $(-1,-1,1,1), (0,1,-1,0), (1,0,0,-1), (-1,-1,0,0)$ & 36, 47, 125, 128, 138, 156, 248, 257, 348, 567\\  \hline
55 & $(-1,-1,1,1), (0,1,-1,0), (1,0,0,-1), (0,0,-1,-1)$ & 36, 47, 125, 128, 138, 156, 248, 257, 348, 567\\\hline
\end{tabular}
\medskip
\caption{$(V(P),F(P))=(8,18)$} \label{tab:VMNF818} 
\end{table}
\begin{table}[H]
\centering
\begin{tabular}{|c|c|c|} \hline
ID & $\mathcal{I}$ & c.v.e. $\ZZ/3$ \\ \hline
53 & $(x-y)y,(x-z)z,(x+u)^2x,u^3,(x-y)u^2$, & $u$ \\ 
& $(x+u)xy,(x-z)u^2,(x+u)xz,(x-y)(x-z)u,xyz$ & \\ \hline
55 & $(x-y-u)y,(x-z-u)z,x^3,(x-y)(x-z)u,u^3$, & $(x,u)$ \\
& $x^2y,u^2(x-z),x^2z,u^2(x-y),xyz$ & \\ \hline
\end{tabular}
\medskip
\caption{Ideals and invariants when $(V(P),F(P))=(8,18)$}
\label{d=4v=8f=18}
\end{table}
}


\subsection{The case where $V(P)=9$}

\subsubsection{$(V(P),F(P))=(9,18)$}

In this case, there are four smooth Fano $4$-polytopes as shown in Table~\ref{tab:VMNF918} and they are all combinatorially equivalent to a direct sum of a $2$-simplex and a $6$-gon. 
Using the data in Table~\ref{tab:VMNF918}, we obtain Table~\ref{d=4v=9f=18} which shows that the four cohomology rings are not isomorphic to each other. 

{\scriptsize
\begin{table}[H]
\centering
\begin{tabular}{|c|c|c|} \hline
ID & vertices of $P$ from $5$th & minimal nonfaces \\ \hline
27 & $(-1,-1,0,2), (0,0,-1,1), (0,0,1,-1),(0,0,-1,0), (0,0,0,-1)$ & 36, 38, 39, 47, 48, 49,  67, 69, 78, 125\\ 
\hline
46 & $(-1,-1,1,1), (0,0,-1,1), (0,0,1,-1),(0,0,-1,0), (0,0,0,-1)$ & 36, 38, 39, 47, 48, 49, 67, 69, 78, 125\\ 
\hline
119& $(-1,0,0,1), (1,0,0,-1), (-1,0,0,0),(0,-1,-1,1), (0,0,0,-1)$ & 15, 17, 19, 46, 47, 49, 56, 59, 67, 238 \\ 
\hline
122& $(-1,0,0,1), (1,0,0,-1), (-1,0,0,0),(0,0,0,-1), (0,-1,-1,0)$ & 15, 17, 18, 46, 47, 48, 56, 58, 67, 239\\ 
\hline
\end{tabular}
\medskip
\caption{$(V(P),F(P))=(9,18)$} \label{tab:VMNF918} 
\end{table}
\begin{table}[H]
\centering
\begin{tabular}{|c|c|c|c|} \hline
ID & $\mathcal{I}$ & s.v.e. $\ZZ/2$ & c.v.e. $\ZZ/3$ \\ \hline
27 & $y(y+u),u(y+u),v(z-u),z(2x-z-v),$ & $(y+u,z+v,u+v)$ & $(y+u,x)$ \\
&$u(2x+y-v),v(2x-z-v),yz,yv,zu,x^3$ & & \\ \hline
46 & $y(x-y-u),u(x-y-u),v(x+z-u),z(x-z-v),$ & $u+v$ & $x$ \\
&$u(x+y-v),v(x-z-v),yz,yv,zu,x^3$ & & \\ \hline
119 & $x(x+z),z(x+z),v(y-z),y(y-u+v),$ & $x+z$ & $(x+z,u)$ \\
&$z(x+u-v),v(y-u+v),xy,xv,yz,u^3$ & & \\ \hline
122 & $x(x+z),z(x+z),u(y-z),y(y+u),$ & $(x+z,y+u,z+u)$ & all \\
&$z(x-u),u(y+u),xy,xu,yz,v^3$ & & \\ \hline
\end{tabular}
\medskip
\caption{Ideals and invariants when $(V(P),F(P))=(9,18)$}
\label{d=4v=9f=18}
\end{table}
}


\subsubsection{$(V(P),F(P))=(9,20)$}

In this case, there are 17 smooth Fano $4$-polytopes as shown in Table~\ref{tab:VMNF920} and they are all combinatorially equivalent to a direct sum of two $1$-simplices and a $5$-gon.

{\scriptsize
\begin{table}[H]
\centering
\begin{tabular}{|c|c|c|} \hline
ID & vertices of $P$ from $5$th & minimal nonfaces \\ \hline
71 & $(-1,0,0,1), (0,-1,0,1), (0,0,-1,1),(0,0,1,-1), (0,0,-1,0)$ & 15, 26, 37, 39, 48, 49, 78\\ \hline
73 & $(-1,0,0,1), (0,-1,0,1), (0,0,-1,1),(0,0,1,-1), (0,0,0,-1)$ & 15, 26, 37, 39, 48, 49, 78\\ \hline
76 & $(-1,0,0,1), (0,-1,0,1), (0,0,1,-1),(0,0,-1,0), (0,0,0,-1)$ & 15, 26, 38, 39, 47, 49, 78\\ \hline
77 & $(-1,0,0,1), (0,-1,0,1), (0,1,-1,0),(0,1,0,-1), (0,-1,0,0)$ & 15, 26, 29, 37, 48, 49, 68\\ \hline
79 & $(-1,0,0,1), (0,-1,0,1), (0,1,-1,0),(1,0,0,-1), (-1,0,0,0)$ & 15, 19, 26, 37, 48, 49, 58\\ \hline
81 & $(-1,0,0,1), (0,-1,0,1), (0,1,-1,0),(1,0,0,-1), (0,0,0,-1)$ & 15, 19, 26, 37, 48, 49, 58\\ \hline
82 & $(-1,0,0,1), (0,-1,0,1), (0,1,-1,0),(0,-1,0,0), (0,0,0,-1)$ & 15, 26, 28, 37, 48, 49, 69\\ \hline
84 & $(-1,0,0,1), (0,-1,0,1), (0,1,0,-1),(0,-1,0,0), (0,0,-1,0)$ & 15, 26, 28, 39, 47, 48, 67\\ \hline
88 & $(-1,0,0,1), (0,-1,0,1), (0,1,0,-1),(0,-1,0,0), (0,1,-1,-1)$ & 15, 26, 28, 39, 47, 48, 67\\ \hline
90 & $(-1,0,0,1), (0,-1,0,1), (0,1,0,-1),(0,0,-1,0), (0,0,0,-1)$ & 15, 26, 29, 38, 47, 49, 67\\ \hline
91 & $(-1,0,0,1), (0,-1,0,1), (0,1,0,-1),(0,0,0,-1), (0,1,-1,-1)$ & 15, 26, 28, 39, 47, 48, 67\\ \hline
92 & $(-1,0,0,1), (0,-1,0,1), (0,1,0,-1),(0,0,0,-1), (0,0,-1,-1)$ & 15, 26, 28, 39, 47, 48, 67\\ \hline
102& $(-1,0,0,1), (0,-1,1,0), (0,1,-1,0),(0,-1,0,0), (0,0,0,-1)$ & 15, 26, 28, 37, 38, 49, 67 \\ \hline
103& $(-1,0,0,1), (0,-1,1,0), (0,1,-1,0),(0,-1,0,0), (0,1,-1,-1)$ & 15, 26, 28, 37,  38, 49, 67\\ \hline
107& $(-1,0,0,1), (0,-1,1,0), (0,0,-1,0),(0,0,0,-1), (0,0,-1,-1)$ & 15, 26, 37, 39, 48, 49, 78 \\ \hline
113& $(-1,0,0,1), (0,0,1,-1), (0,-1,0,0),(0,0,-1,0), (0,0,0,-1)$ & 15, 27, 38, 39, 46, 49, 68\\ \hline
120& $(-1,0,0,1), (1,0,0,-1), (-1,0,0,0),(0,-1,0,0), (0,0,-1,0)$ & 15, 17, 28, 39, 46, 47, 56\\ \hline
\end{tabular}
\medskip
\caption{$(V(P),F(P))=(9,20)$} \label{tab:VMNF920} 
\end{table}
}

We see that 
\begin{equation} \label{eq:diff920}
\begin{split}
&X_{73}\cong X_{76}\cong X_{92}, \quad X_{77}\cong X_{88},\quad X_{81}\cong X_{103},\\ 
&X_{82}\cong X_{91}\cong X_{107},\quad X_{90}\cong X_{113},
\end{split}
\end{equation}
see Subsection~\ref{subsec:diffeo}. Using the data in Table~\ref{tab:VMNF920}, we obtain the following table.

{\scriptsize
\begin{table}[H]
\centering
\begin{tabular}{|c|c|c|c|c|} \hline
ID & $\mathcal{I}$ & s.v.e. & s.v.e. $\ZZ/2$ & 4-v.e. $\ZZ/2$ \\ \hline
71 & \begin{tabular}{c}$x^2,y^2,z(z+v),v(z-u+v),$\\
$u(x+y-u),v(x+y-v),zu$\end{tabular} & $x,y$ & & \\ \hline
\begin{tabular}{c}73\\
(76,\ 92)\end{tabular} & \begin{tabular}{c}$x^2,y^2,z^2,v(z-u),$\\
$u(x+y-u-v),v(x+y-v),zu$\end{tabular} & $x,y,z$ & & \\ \hline
\begin{tabular}{c}77\\
(88) \end{tabular}& \begin{tabular}{c}$x^2,y(y-z+v),v(x+z-v),$\\
$z^2,u(x-u),v(x+y-u),yu$\end{tabular} & $x,z,x-2u$ & $(x,z)$ & $(x,z,u,v)$ \\ \hline
79 &\begin{tabular}{c} $x(x+v),v(v-y),y(y-z),$\\
$z^2,u(y-u),v(x+y-u),xu$\end{tabular} & $z,z-2y$ & & \\ \hline
\begin{tabular}{c}81\\
(103)\end{tabular} & \begin{tabular}{c}$x^2,v(x-u),y(y-z),z^2,$\\
$u(y-u-v),v(y-v),xu$ \end{tabular}& $x,z,z-2y$ & $(x,z)$ & $(x,y,z,v)$\\ \hline
\begin{tabular}{c}82\\
(91,\ 107)\end{tabular} & \begin{tabular}{c}$x^2,y(y-z+u),u(y-z+u),$\\
$z^2,u(x+y-v),v(x-v),yv$ \end{tabular}& \begin{tabular}{c}$x,z,x-2v,$\\
$z-2y-2u$\end{tabular} & $(x,z)$ & $(x,y+u,z,v)$ \\ \hline
84 & \begin{tabular}{c}$x^2,y(y+u),u(x-u),v^2,$\\
$u(x+y-z),z(x-z),yz$ \end{tabular}& \begin{tabular}{c}$x,v,x-2u,$\\
$x-2z$\end{tabular} & $(x,v)$ & all \\ \hline
\begin{tabular}{c}90\\
(113)\end{tabular} & \begin{tabular}{c}$x^2,y^2,v(y-z),u^2,$\\
$v(x-v),z(x-z-v),yz$ \end{tabular}& \begin{tabular}{c}$x,y,u,$\\
$x-2v$\end{tabular} & & \\ \hline
102 & \begin{tabular}{c}$x^2,y(y+u),u^2,z^2,$\\
$u(y-z),v(x-v),yz$\end{tabular} & $\infty$ & $(x,z,u)$ & \\ \hline
120 & \begin{tabular}{c}$x(x+z),z^2,u^2,v^2,$\\
$y^2,z(x-y),xy$\end{tabular} & $\infty$& $(y,z,u,v)$ & \\ \hline
\end{tabular}
\medskip
\caption{Ideals and invariants when $(V(P), F(P))=(9,20)$}
\label{d=4v=9f=20}
\end{table}
}

Table~\ref{d=4v=9f=20} shows that the ten cohomology rings in the table are distinguished by the invariants in the table except for $77$ and $81$. 
The s.v.e. over $\ZZ/2$ of $H^*(X_{77})$ and $ H^*(X_{81})$ are both $(x,z)$.  Therefore, if there is an isomorphism $H^*(X_{77})\to H^*(X_{81})$, 
then it induces an isomorphism $(H^*(X_{77})\otimes \ZZ/2)/(x,z)\to (H^*(X_{81})\otimes \ZZ/2)/(x,z)$. However, Table~\ref{d=4v=9f=20-1} shows that this does not occur because the degree sequences of the ideals are different, so $H^*(X_{77})\ncong H^*(X_{81})$. 

{\scriptsize
\begin{table}[H]
\centering
\begin{tabular}{|c|c|c|} \hline
ID & $\mathcal{I}\otimes \ZZ/2+(x,z)$  \\ \hline
77(88) & $y(y+v),v^2,u^2,v(y+u),yu$ \\ \hline
81(103) & $uv,y^2,u(y+u),v(y+v)$ \\ \hline
\end{tabular}
\medskip
\caption{Distinguishment between $H^*(X_{77})$ and $H^*(X_{81})$}\label{d=4v=9f=20-1}
\end{table}
}


\subsubsection{$(V(P),F(P))=(9,21)$}

In this case, there are four smooth Fano $4$-polytopes and there are two combinatorial types among them. Indeed, the combinatorial type of ID $52$ is different from the others as is seen from Table~\ref{tab:VMNF921}.

{\scriptsize
\begin{table}[H]
\begin{tabular}{|c|c|c|} \hline
ID & vertices of $P$ from $5$th & minimal nonfaces \\ \hline
51 & $(-1,-1,1,1), (0,1,-1,0), (0,1,0,-1),(0,-1,0,0), (0,0,-1,0)$ & 28, 29, 36, 39, 47, 68, 125, 156, 157, 159, 348\\ \hline
52 & $(-1,-1,1,1), (0,1,-1,0), (1,0,0,-1),(0,0,-1,0), (0,0,0,-1)$ & 19, 28, 36, 38, 47, 49, 125, 156, 257, 567\\ 
\hline
56 & $(-1,-1,1,1), (0,1,-1,0), (0,-1,0,0),(0,0,-1,0), (0,0,0,-1)$ & 27, 28, 36, 38, 49, 67, 125, 156, 158, 159, 347\\ \hline
89 & $(-1,0,0,1), (0,-1,0,1), (0,1,0,-1),(0,-1,0,0), (1,0,-1,-1)$ & 15, 26, 28, 47, 48, 67, 178, 239, 349, 359, 369\\ \hline
\end{tabular}
\medskip
\caption{$(V(P),F(P))=(9,21)$} \label{tab:VMNF921} 
\end{table}
}

%

Using the data in Table~\ref{tab:VMNF921}, we obtain Table~\ref{d=4v=9f=21}. 
It shows that $H^*(X_{52})$ can be distinguished from the other three cohomology rings by the degree sequences of the ideals.
On the other hand, Table~\ref{d=4v=9f=21/2Z} shows that the three cohomology rings are not isomorphic to each other. 

{\scriptsize
\begin{table}[H]
\centering
\begin{tabular}{|c|c|} \hline
ID & $\mathcal{I}$ \\ \hline
51 & $u(x-z+u),v(x-y-z+u),y(x-y-v),v(x-y-v),$ \\
& $z(x-z),yu,x^2(x+u),x^2y,x^2z,x^2v,u(x-v)(x-z)$ \\ \hline
52 & $v(x-z),u(x-y),y(x-y-u),u^2,z(x-z-v),$ \\
& $v^2,x^3,x^2y,x^2z,xyz$ \\ \hline
56 & $z(x+z),u(u+z),y(x-y-u),u(x-y-u),v(x-v),$ \\
& $yz,x^2(x+z),x^2y,x^2u,x^2v,z(x-u)(x-v)$ \\ \hline
89 & $x(x-v),y(y+u),u(y-z+u),z(x-z-v),u(x-u-v),$ \\
& $yz,zu(x-v),v^2(z-u),v^2(z+v),v^2x,v^2y$ \\ \hline
\end{tabular}
\medskip
\caption{Ideals when $(V(P),F(P))=(9,21)$}\label{d=4v=9f=21}
\end{table}
}


{\scriptsize
\begin{table}[H]
	\centering
	\tiny
	\begin{tabular}{|c|c|c|c|} \hline
		ID & s.v.e. $\ZZ/2\ZZ$ & c.v.e. $\ZZ/3\ZZ$ & c.v.e. $\ZZ/2\ZZ$ \\ \hline
		51 & $\emptyset$ & $(z,y+v)$ & $x+u,x+u+v,z,y+z,y+v,y+z+v$ \\ \hline
                52 & $(u,v)$ &  &  \\ \hline
		56 & $\emptyset$ & $(x+z,y+u,v)$ & \\ \hline 
		89 & $\emptyset$ & $(x,y+u)$ & $x,x+z+u,y+u,z+v,u+v$ \\ \hline
	\end{tabular}
	\medskip
	\caption{$(V(P),F(P))=(9,21)$} 
	\label{d=4v=9f=21/2Z}
\end{table}
}

\bigskip

\subsection{Diffeomorphism} \label{subsec:diffeo}

In the previous subsections, we claimed some diffeomorphisms among toric Fano $4$-folds, i.e. \eqref{eq:diff69}, \eqref{eq:diff712}, \eqref{eq:diff815}, \eqref{eq:diff816-1}, \eqref{eq:diff816-2}, \eqref{eq:diff920}. In this subsection, we establish them by showing that the condition in Lemma~\ref{diffeolemma} is satisfied in each case. 

In the following, we express the vertices of a smooth Fano polytope $P_q$ with ID number $q$ in terms of a matrix where each row shows a vertex of $P_q$ and the numbers written on the left side of a matrix are the numbers of the vertices. For instance, the vertices of $P_{70}$ are arranged as $(1,2,3,4,5,6)$ while the vertices of $P_{141}$ are arranged as $(1,2,3,6,5,4)$ in their matrices. The correspondence $(1,2,3,4,5,6)\to (1,2,3,6,5,4)$ gives a bijection between the vertices of $P_{70}$ and $P_{141}$, which preserves the combinatorial structures of $P_{70}$ and $P_{141}$. The multiplication by a $4\times 4$ unimodular matrix from the right shows a unimodular transformation. In each case, we will see that the resulting vectors of the corresponding vertices agree up to sign, so the condition in Lemma~\ref{diffeolemma} is satisfied.

$\bullet$ 70 and 141

{\tiny 
\begin{table}[H]
\centering
\begin{tabular}{ccccccccc}
\begin{tabular}{c}
\\
$1$ \\
$2$ \\
$3$ \\
$4$ \\
$5$ \\
$6$ \\
\end{tabular}
\begin{tabular}{c}
\textbf{70} \\
$\begin{pmatrix}
1 & 0 & 0 & 0 \\
0 & 1 & 0 & 0 \\
0 & 0 & 1 & 0 \\
0 & 0 & 0 & 1 \\
-1 & -1 & 1 & 1 \\
0 & 0 & -1 & -1 \\
\end{pmatrix}$
\end{tabular}
& & & 
\begin{tabular}{c}
\\
$1$ \\
$2$ \\
$3$ \\
$6$ \\
$5$ \\
$4$ \\
\end{tabular}
\begin{tabular}{l}
\hspace{4.5em}\textbf{141} \\
$\begin{pmatrix}
1 & 0 & 0 & 0 \\
0 & 1 & 0 & 0 \\
0 & 0 & 1 & 0 \\
0 & 0 & -1 & -1 \\
-1 & -1 & 0 & 1 \\
0 & 0 & 0 & 1 \\
\end{pmatrix} 
\begin{pmatrix}
1 & 0 & 0 & 0\\
0 & 1 & 0 & 0\\
0 & 0 & -1 & 0\\
0 & 0 & 1 & 1\end{pmatrix}
=\begin{pmatrix}
1 & 0 & 0 & 0 \\
0 & 1 & 0 & 0 \\
0 & 0 & -1 & 0 \\
0 & 0 & 0 & -1 \\
-1 & -1 & 1 & 1 \\
0 & 0 & 1 & 1 \\
\end{pmatrix} 
$
\end{tabular}
\end{tabular}
\end{table}
}

$\bullet$ 30 and 43
{\tiny 
\begin{table}[H]
\centering
\begin{tabular}{ccccccccc}
\begin{tabular}{c}
\\
$1$ \\
$2$ \\
$3$ \\
$4$ \\
$5$ \\
$6$ \\
$7$ \\
\end{tabular}
\begin{tabular}{c}
\textbf{30} \\
$\begin{pmatrix}
1 & 0 & 0 & 0 \\
0 & 1 & 0 & 0 \\
0 & 0 & 1 & 0 \\
0 & 0 & 0 & 1 \\
-1 & -1 & 0 & 2 \\
0 & 0 & -1 & 1 \\
0 & 0 & 0 & -1 \\
\end{pmatrix}$
\end{tabular}
& & & 
\begin{tabular}{c}
\\
$1$ \\
$2$ \\
$3$ \\
$4$ \\
$5$ \\
$7$ \\
$6$ \\
\end{tabular}
\begin{tabular}{l}
\hspace{4.75em}\textbf{43} \\
$\begin{pmatrix}
1 & 0 & 0 & 0 \\
0 & 1 & 0 & 0 \\
0 & 0 & 1 & 0 \\
0 & 0 & 0 & 1 \\
-1 & -1 & 0 & 2 \\
0 & 0 & -1 & -1 \\
0 & 0 & 0 & -1 \\
\end{pmatrix} 
\begin{pmatrix}
-1 & 0 & 0 & 0\\
0 & -1 & 0 & 0\\
0 & 0 & 1 & 0\\
0 & 0 & 0 & -1\end{pmatrix}
=\begin{pmatrix}
-1 & 0 & 0 & 0 \\
0 & -1 & 0 & 0 \\
0 & 0 & 1 & 0 \\
0 & 0 & 0 & -1 \\
1 & 1 & 0 & -2 \\
0 & 0 & -1 & 1 \\
0 & 0 & 0 & 1 \\
\end{pmatrix} 
$
\end{tabular}
\end{tabular}
\end{table}
}

$\bullet$ 68 and 134
{\tiny 
\begin{table}[H]
\centering
\begin{tabular}{ccccccccc}
\begin{tabular}{c}
\\
$1$ \\
$2$ \\
$3$ \\
$4$ \\
$5$ \\
$6$ \\
$7$ \\
\end{tabular}
\begin{tabular}{c}
\textbf{68} \\
$\begin{pmatrix}
1 & 0 & 0 & 0 \\
0 & 1 & 0 & 0 \\
0 & 0 & 1 & 0 \\
0 & 0 & 0 & 1 \\
-1 & -1 & 1 & 1 \\
0 & 0 & -1 & 0 \\
0 & 0 & -1 & -1 \\
\end{pmatrix}$
\end{tabular}
& & & 
\begin{tabular}{c}
\\
$2$ \\
$3$ \\
$6$ \\
$5$ \\
$7$ \\
$4$ \\
$1$ \\
\end{tabular}
\begin{tabular}{l}
\hspace{4.5em}\textbf{134} \\
$\begin{pmatrix}
0 & 1 & 0 & 0 \\
0 & 0 & 1 & 0 \\
0 & 0 & 0 & -1 \\
-1 & 0 & 0 & 1 \\
1 & -1 & -1 & 0 \\
0 & 0 & 0 & 1 \\
1 & 0 & 0 & 0 \\
\end{pmatrix} 
\begin{pmatrix}
0 & 0 & 1 & 1\\
1 & 0 & 0 & 0\\
0 & 1 & 0 & 0\\
0 & 0 & 1 & 0\end{pmatrix}
=\begin{pmatrix}
1 & 0 & 0 & 0 \\
0 & 1 & 0 & 0 \\
0 & 0 & -1 & 0 \\
0 & 0 & 0 & -1 \\
-1 & -1 & 1 & 1 \\
0 & 0 & 1 & 0 \\
0 & 0 & 1 & 1 \\
\end{pmatrix} 
$
\end{tabular}
\end{tabular}
\end{table}
}

$\bullet$ 129 and 136
{\tiny 
\begin{table}[H]
\centering
\begin{tabular}{ccccccccc}
\begin{tabular}{c}
\\
$1$ \\
$2$ \\
$3$ \\
$4$ \\
$5$ \\
$6$ \\
$7$ \\
\end{tabular}
\begin{tabular}{c}
\textbf{129} \\
$\begin{pmatrix}
1 & 0 & 0 & 0 \\
0 & 1 & 0 & 0 \\
0 & 0 & 1 & 0 \\
0 & 0 & 0 & 1 \\
-1 & 0 & 0 & 1 \\
0 & -1 & -1 & 1 \\
0 & 0 & 0 & -1 \\
\end{pmatrix}$
\end{tabular}
& & & 
\begin{tabular}{c}
\\
$1$ \\
$2$ \\
$3$ \\
$4$ \\
$5$ \\
$7$ \\
$6$ \\
\end{tabular}
\begin{tabular}{l}
\hspace{4.5em}\textbf{136} \\
$\begin{pmatrix}
1 & 0 & 0 & 0 \\
0 & 1 & 0 & 0 \\
0 & 0 & 1 & 0 \\
0 & 0 & 0 & 1 \\
-1 & 0 & 0 & 1 \\
0 & -1 & -1 & -1 \\
0 & 0 & 0 & -1 \\
\end{pmatrix} 
\begin{pmatrix}
-1 & 0 & 0 & 0\\
0 & 1 & 0 & 0\\
0 & 0 & 1 & 0\\
0 & 0 & 0 & -1\end{pmatrix}
=\begin{pmatrix}
-1 & 0 & 0 & 0 \\
0 & 1 & 0 & 0 \\
0 & 0 & 1 & 0 \\
0 & 0 & 0 & -1 \\
1 & 0 & 0 & -1 \\
0 & -1 & -1 & 1 \\
0 & 0 & 0 & 1 \\
\end{pmatrix} 
$
\end{tabular}
\end{tabular}
\end{table}
}

$\bullet$ 28 and 32
{\tiny 
\begin{table}[H]
\centering
\begin{tabular}{ccccccccc}
\begin{tabular}{c}
\\
$1$ \\
$2$ \\
$3$ \\
$4$ \\
$5$ \\
$6$ \\
$7$ \\
$8$ \\
\end{tabular}
\begin{tabular}{c}
\textbf{28} \\
$\begin{pmatrix}
1 & 0 & 0 & 0 \\
0 & 1 & 0 & 0 \\
0 & 0 & 1 & 0 \\
0 & 0 & 0 & 1 \\
-1 & -1 & 0 & 2 \\
0 & 0 & -1 & 1 \\
0 & 0 & 1 & -1 \\
0 & 0 & 0 & -1 \\
\end{pmatrix}$
\end{tabular}
& & & 
\begin{tabular}{c}
\\
$1$ \\
$2$ \\
$6$ \\
$8$ \\
$5$ \\
$7$ \\
$3$ \\
$4$ \\
\end{tabular}
\begin{tabular}{l}
\hspace{4.75em}\textbf{32} \\
$\begin{pmatrix}
1 & 0 & 0 & 0 \\
0 & 1 & 0 & 0 \\
0 & 0 & 1 & -1 \\
0 & 0 & 0 & -1 \\
-1 & -1 & 0 & 2 \\
0 & 0 & -1 & 0 \\
0 & 0 & 1 & 0 \\
0 & 0 & 0 & 1 \\
\end{pmatrix} 
\begin{pmatrix}
1 & 0 & 0 & 0\\
0 & 1 & 0 & 0\\
0 & 0 & -1 & 1\\
0 & 0 & 0 & 1\end{pmatrix}
=\begin{pmatrix}
1 & 0 & 0 & 0 \\
0 & 1 & 0 & 0 \\
0 & 0 & -1 & 0 \\
0 & 0 & 0 & -1 \\
-1 & -1 & 0 & 2 \\
0 & 0 & 1 & -1 \\
0 & 0 & -1 & 1 \\
0 & 0 & 0 & 1 \\
\end{pmatrix} 
$
\end{tabular}
\end{tabular}
\end{table}
}

$\bullet$ 67 and 118
{\tiny 
\begin{table}[H]
\centering
\begin{tabular}{ccccccccc}
\begin{tabular}{c}
\\
$1$ \\
$2$ \\
$3$ \\
$4$ \\
$5$ \\
$6$ \\
$7$ \\
$8$ \\
\end{tabular}
\begin{tabular}{c}
\textbf{67} \\
$\begin{pmatrix}
1 & 0 & 0 & 0 \\
0 & 1 & 0 & 0 \\
0 & 0 & 1 & 0 \\
0 & 0 & 0 & 1 \\
-1 & -1 & 1 & 1 \\
0 & 0 & -1 & 0 \\
0 & 0 & 0 & -1 \\
0 & 0 & -1 & -1 \\
\end{pmatrix}$
\end{tabular}
& & & 
\begin{tabular}{c}
\\
$2$ \\
$3$ \\
$7$ \\
$6$ \\
$8$ \\
$1$ \\
$5$ \\
$4$ \\
\end{tabular}
\begin{tabular}{l}
\hspace{4.5em}\textbf{118} \\
$\begin{pmatrix}
0 & 1 & 0 & 0 \\
0 & 0 & 1 & 0 \\
-1 & 0 & 0 & 0 \\
1 & 0 & 0 & -1 \\
0 & -1 & -1 & 1 \\
1 & 0 & 0 & 0 \\
-1 & 0 & 0 & 1 \\
0 & 0 & 0 & 1 \\
\end{pmatrix} 
\begin{pmatrix}
0 & 0 & 1 & 0\\
1 & 0 & 0 & 0\\
0 & 1 & 0 & 0\\
0 & 0 & 1 & 1\end{pmatrix}
=\begin{pmatrix}
1 & 0 & 0 & 0 \\
0 & 1 & 0 & 0 \\
0 & 0 & -1 & 0 \\
0 & 0 & 0 & -1 \\
-1 & -1 & 1 & 1 \\
0 & 0 & 1 & 0 \\
0 & 0 & 0 & 1 \\
0 & 0 & 1 & 1 \\
\end{pmatrix} 
$
\end{tabular}
\end{tabular}
\end{table}
}

$\bullet$ 123 and 125
{\tiny 
\begin{table}[H]
\centering
\begin{tabular}{ccccccccc}
\begin{tabular}{c}
\\
$1$ \\
$2$ \\
$3$ \\
$4$ \\
$5$ \\
$6$ \\
$7$ \\
$8$ \\
\end{tabular}
\begin{tabular}{c}
\textbf{123} \\
$\begin{pmatrix}
1 & 0 & 0 & 0 \\
0 & 1 & 0 & 0 \\
0 & 0 & 1 & 0 \\
0 & 0 & 0 & 1 \\
-1 & 0 & 0 & 1 \\
1 & 0 & 0 & -1 \\
-1 & 0 & 0 & 0 \\
1 & -1 & -1 & 0 \\
\end{pmatrix}$
\end{tabular}
& & & 
\begin{tabular}{c}
\\
$5$ \\
$2$ \\
$3$ \\
$4$ \\
$1$ \\
$7$ \\
$6$ \\
$8$ \\
\end{tabular}
\begin{tabular}{l}
\hspace{4.5em}\textbf{125} \\
$\begin{pmatrix}
-1 & 0 & 0 & 1 \\
0 & 1 & 0 & 0 \\
0 & 0 & 1 & 0 \\
0 & 0 & 0 & 1 \\
1 & 0 & 0 & 0 \\
-1 & 0 & 0 & 0 \\
1 & 0 & 0 & -1 \\
1 & -1 & -1 & -1 \\
\end{pmatrix} 
\begin{pmatrix}
1 & 0 & 0 & -1\\
0 & 1 & 0 & 0\\
0 & 0 & 1 & 0\\
0 & 0 & 0 & -1\end{pmatrix}
=\begin{pmatrix}
-1 & 0 & 0 & 0 \\
0 & 1 & 0 & 0 \\
0 & 0 & 1 & 0 \\
0 & 0 & 0 & -1 \\
1 & 0 & 0 & -1 \\
-1 & 0 & 0 & 1 \\
1 & 0 & 0 & 0 \\
1 & -1 & -1 & 0 \\
\end{pmatrix} 
$
\end{tabular}
\end{tabular}
\end{table}
}

$\bullet$ 74 and 96
{\tiny 
\begin{table}[H]
\centering
\begin{tabular}{ccccccccc}
\begin{tabular}{c}
\\
$1$ \\
$2$ \\
$3$ \\
$4$ \\
$5$ \\
$6$ \\
$7$ \\
$8$ \\
\end{tabular}
\begin{tabular}{c}
\textbf{74} \\
$\begin{pmatrix}
1 & 0 & 0 & 0 \\
0 & 1 & 0 & 0 \\
0 & 0 & 1 & 0 \\
0 & 0 & 0 & 1 \\
-1 & 0 & 0 & 1 \\
0 & -1 & 0 & 1 \\
0 & 0 & -1 & 1 \\
0 & 0 & 0 & -1 \\
\end{pmatrix}$
\end{tabular}
& & & 
\begin{tabular}{c}
\\
$1$ \\
$2$ \\
$3$ \\
$4$ \\
$5$ \\
$6$ \\
$8$ \\
$7$ \\
\end{tabular}
\begin{tabular}{l}
\hspace{4.75em}\textbf{96} \\
$\begin{pmatrix}
1 & 0 & 0 & 0 \\
0 & 1 & 0 & 0 \\
0 & 0 & 1 & 0 \\
0 & 0 & 0 & 1 \\
-1 & 0 & 0 & 1 \\
0 & -1 & 0 & 1 \\
0 & 0 & -1 & -1 \\
0 & 0 & 0 & -1 \\
\end{pmatrix} 
\begin{pmatrix}
-1 & 0 & 0 & 0\\
0 & -1 & 0 & 0\\
0 & 0 & 1 & 0\\
0 & 0 & 0 & -1\end{pmatrix}
=\begin{pmatrix}
-1 & 0 & 0 & 0 \\
0 & -1 & 0 & 0 \\
0 & 0 & 1 & 0 \\
0 & 0 & 0 & -1 \\
1 & 0 & 0 & -1 \\
0 & 1 & 0 & -1 \\
0 & 0 & -1 & 1 \\
0 & 0 & 0 & 1 \\
\end{pmatrix} 
$
\end{tabular}
\end{tabular}
\end{table}
}

$\bullet$ 83 and 108
{\tiny 
\begin{table}[H]
\centering
\begin{tabular}{ccccccccc}
\begin{tabular}{c}
\\
$1$ \\
$2$ \\
$3$ \\
$4$ \\
$5$ \\
$6$ \\
$7$ \\
$8$ \\
\end{tabular}
\begin{tabular}{c}
\textbf{83} \\
$\begin{pmatrix}
1 & 0 & 0 & 0 \\
0 & 1 & 0 & 0 \\
0 & 0 & 1 & 0 \\
0 & 0 & 0 & 1 \\
-1 & 0 & 0 & 1 \\
0 & -1 & 0 & 1 \\
0 & 1 & -1 & 0 \\
0 & 0 & 0 & -1 \\
\end{pmatrix}$
\end{tabular}
& & & 
\begin{tabular}{c}
\\
$2$ \\
$4$ \\
$1$ \\
$3$ \\
$6$ \\
$8$ \\
$5$ \\
$7$ \\
\end{tabular}
\begin{tabular}{l}
\hspace{4.5em}\textbf{108} \\
$\begin{pmatrix}
0 & 1 & 0 & 0 \\
0 & 0 & 0 & 1 \\
1 & 0 & 0 & 0 \\
0 & 0 & 1 & 0 \\
0 & -1 & 1 & 0 \\
0 & 0 & -1 & -1 \\
-1 & 0 & 0 & 1 \\
0 & 0 & -1 & 0 \\
\end{pmatrix} 
\begin{pmatrix}
0 & 0 & 1 & 0\\
-1 & 0 & 0 & 0\\
0 & 0 & 0 & -1\\
0 & 1 & 0 & 0\end{pmatrix}
=\begin{pmatrix}
-1 & 0 & 0 & 0 \\
0 & 1 & 0 & 0 \\
0 & 0 & 1 & 0 \\
0 & 0 & 0 & -1 \\
1 & 0 & 0 & -1 \\
0 & -1 & 0 & 1 \\
0 & 1 & -1 & 0 \\
0 & 0 & 0 & 1 \\
\end{pmatrix} 
$
\end{tabular}
\end{tabular}
\end{table}
}

$\bullet$ 95 and 131
{\tiny 
\begin{table}[H]
\centering
\begin{tabular}{ccccccccc}
\begin{tabular}{c}
\\
$1$ \\
$2$ \\
$3$ \\
$4$ \\
$5$ \\
$6$ \\
$7$ \\
$8$ \\
\end{tabular}
\begin{tabular}{c}
\textbf{95} \\
$\begin{pmatrix}
1 & 0 & 0 & 0 \\
0 & 1 & 0 & 0 \\
0 & 0 & 1 & 0 \\
0 & 0 & 0 & 1 \\
-1 & 0 & 0 & 1 \\
0 & -1 & 0 & 1 \\
0 & 0 & -1 & 0 \\
0 & 0 & 0 & -1 \\
\end{pmatrix}$
\end{tabular}
& & & 
\begin{tabular}{c}
\\
$1$ \\
$3$ \\
$2$ \\
$4$ \\
$5$ \\
$8$ \\
$6$ \\
$7$ \\
\end{tabular}
\begin{tabular}{l}
\hspace{4.5em}\textbf{131} \\
$\begin{pmatrix}
1 & 0 & 0 & 0 \\
0 & 0 & 1 & 0 \\
0 & 1 & 0 & 0 \\
0 & 0 & 0 & 1 \\
-1 & 0 & 0 & 1 \\
0 & 0 & -1 & -1 \\
0 & -1 & 0 & 0 \\
0 & 0 & 0 & -1 \\
\end{pmatrix} 
\begin{pmatrix}
-1 & 0 & 0 & 0\\
0 & 0 & 1 & 0\\
0 & 1 & 0 & 0\\
0 & 0 & 0 & -1\end{pmatrix}
=\begin{pmatrix}
-1 & 0 & 0 & 0 \\
0 & 1 & 0 & 0 \\
0 & 0 & 1 & 0 \\
0 & 0 & 0 & -1 \\
1 & 0 & 0 & -1 \\
0 & -1 & 0 & 1 \\
0 & 0 & -1 & 0 \\
0 & 0 & 0 & 1 \\
\end{pmatrix} 
$
\end{tabular}
\end{tabular}
\end{table}
}

$\bullet$ 29 and 39
{\tiny 
\begin{table}[H]
\centering
\begin{tabular}{ccccccccc}
\begin{tabular}{c}
\\
$1$ \\
$2$ \\
$3$ \\
$4$ \\
$5$ \\
$6$ \\
$7$ \\
$8$ \\
\end{tabular}
\begin{tabular}{c}
\textbf{29} \\
$\begin{pmatrix}
1 & 0 & 0 & 0 \\
0 & 1 & 0 & 0 \\
0 & 0 & 1 & 0 \\
0 & 0 & 0 & 1 \\
-1 & -1 & 0 & 2 \\
0 & 0 & -1 & 1 \\
0 & 1 & 0 & -1 \\
0 & 0 & 0 & -1 \\
\end{pmatrix}$
\end{tabular}
& & & 
\begin{tabular}{c}
\\
$1$ \\
$3$ \\
$2$ \\
$4$ \\
$5$ \\
$8$ \\
$6$ \\
$7$ \\
\end{tabular}
\begin{tabular}{l}
\hspace{4.75em}\textbf{39} \\
$\begin{pmatrix}
1 & 0 & 0 & 0 \\
0 & 1 & 0 & 0 \\
0 & 0 & 1 & 0 \\
0 & 0 & 0 & 1 \\
-1 & -1 & 0 & 2 \\
0 & 0 & -1 & -1 \\
0 & 1 & 0 & -1 \\
0 & 0 & 0 & -1 \\
\end{pmatrix} 
\begin{pmatrix}
-1 & 0 & 0 & 0\\
0 & -1 & 0 & 0\\
0 & 0 & 1 & 0\\
0 & 0 & 0 & -1\end{pmatrix}
=\begin{pmatrix}
-1 & 0 & 0 & 0 \\
0 & -1 & 0 & 0 \\
0 & 0 & 1 & 0 \\
0 & 0 & 0 & -1 \\
1 & 1 & 0 & -2 \\
0 & 0 & -1 & 1 \\
0 & -1 & 0 & 1 \\
0 & 0 & 0 & 1 \\
\end{pmatrix} 
$
\end{tabular}
\end{tabular}
\end{table}
}

$\bullet$ 111 and 116
{\tiny 
\begin{table}[H]
\centering
\begin{tabular}{ccccccccc}
\begin{tabular}{c}
\\
$1$ \\
$2$ \\
$3$ \\
$4$ \\
$5$ \\
$6$ \\
$7$ \\
$8$ \\
\end{tabular}
\begin{tabular}{c}
\textbf{111} \\
$\begin{pmatrix}
1 & 0 & 0 & 0 \\
0 & 1 & 0 & 0 \\
0 & 0 & 1 & 0 \\
0 & 0 & 0 & 1 \\
-1 & 0 & 0 & 1 \\
0 & 0 & 1 & -1 \\
0 & -1 & -1 & 1 \\
0 & 0 & 0 & -1 \\
\end{pmatrix}$
\end{tabular}
& & & 
\begin{tabular}{c}
\\
$1$ \\
$2$ \\
$6$ \\
$7$ \\
$5$ \\
$3$ \\
$8$ \\
$4$ \\
\end{tabular}
\begin{tabular}{l}
\hspace{4.5em}\textbf{116} \\
$\begin{pmatrix}
1 & 0 & 0 & 0 \\
0 & 1 & 0 & 0 \\
0 & 0 & 1 & -1 \\
0 & 0 & 0 & -1 \\
-1 & 0 & 0 & 1 \\
0 & 0 & 1 & 0 \\
0 & -1 & -1 & 0 \\
0 & 0 & 0 & 1 \\
\end{pmatrix} 
\begin{pmatrix}
1 & 0 & 0 & 0\\
0 & -1 & 0 & 0\\
0 & 0 & -1 & 1\\
0 & 0 & 0 & 1\end{pmatrix}
=\begin{pmatrix}
1 & 0 & 0 & 0 \\
0 & -1 & 0 & 0 \\
0 & 0 & -1 & 0 \\
0 & 0 & 0 & -1 \\
-1 & 0 & 0 & 1 \\
0 & 0 & -1 & 1 \\
0 & 1 & 1 & -1 \\
0 & 0 & 0 & 1 \\
\end{pmatrix} 
$
\end{tabular}
\end{tabular}
\end{table}
}

$\bullet$ 73 and 76 and 92
{\tiny 
\begin{table}[H]
\centering
\begin{tabular}{ccccccccc}
\begin{tabular}{c}
\\
$1$ \\
$2$ \\
$3$ \\
$4$ \\
$5$ \\
$6$ \\
$7$ \\
$8$ \\
$9$ \\
\end{tabular}
\begin{tabular}{c}
\textbf{73} \\
$\begin{pmatrix}
1 & 0 & 0 & 0 \\
0 & 1 & 0 & 0 \\
0 & 0 & 1 & 0 \\
0 & 0 & 0 & 1 \\
-1 & 0 & 0 & 1 \\
0 & -1 & 0 & 1 \\
0 & 0 & -1 & 1 \\
0 & 0 & 1 & -1 \\
0 & 0 & 0 & -1 \\
\end{pmatrix}$
\end{tabular}
& & & 
\begin{tabular}{c}
\\
$1$ \\
$2$ \\
$7$ \\
$9$ \\
$5$ \\
$6$ \\
$8$ \\
$3$ \\
$4$ \\
\end{tabular}
\begin{tabular}{l}
\hspace{4.75em}\textbf{76} \\
$\begin{pmatrix}
1 & 0 & 0 & 0 \\
0 & 1 & 0 & 0 \\
0 & 0 & 1 & -1 \\
0 & 0 & 0 & -1 \\
-1 & 0 & 0 & 1 \\
0 & -1 & 0 & 1 \\
0 & 0 & -1 & 0 \\
0 & 0 & 1 & 0 \\
0 & 0 & 0 & 1 \\
\end{pmatrix} 
\begin{pmatrix}
1 & 0 & 0 & 0\\
0 & 1 & 0 & 0\\
0 & 0 & -1 & 1\\
0 & 0 & 0 & 1\end{pmatrix}
=\begin{pmatrix}
1 & 0 & 0 & 0 \\
0 & 1 & 0 & 0 \\
0 & 0 & -1 & 0 \\
0 & 0 & 0 & -1 \\
-1 & 0 & 0 & 1 \\
0 & -1 & 0 & 1 \\
0 & 0 & 1 & -1 \\
0 & 0 & -1 & 1 \\
0 & 0 & 0 & 1 \\
\end{pmatrix} 
$
\end{tabular}
\end{tabular}
\end{table}
}

{\tiny 
\begin{table}[H]
\centering
\begin{tabular}{ccccccccc}
& & & 
\begin{tabular}{c}
\\
$1$ \\
$3$ \\
$2$ \\
$4$ \\
$5$ \\
$9$ \\
$6$ \\
$7$ \\
$8$ \\
\end{tabular}
\begin{tabular}{l}
\hspace{4.75em}\textbf{92} \\
$\begin{pmatrix}
1 & 0 & 0 & 0 \\
0 & 0 & 1 & 0 \\
0 & 1 & 0 & 0 \\
0 & 0 & 0 & 1 \\
-1 & 0 & 0 & 1 \\
0 & 0 & -1 & -1 \\
0 & -1 & 0 & 1 \\
0 & 1 & 0 & -1 \\
0 & 0 & 0 & -1 \\
\end{pmatrix} 
\begin{pmatrix}
-1 & 0 & 0 & 0\\
0 & 0 & -1 & 0\\
0 & 1 & 0 & 0\\
0 & 0 & 0 & -1\end{pmatrix}
=\begin{pmatrix}
-1 & 0 & 0 & 0 \\
0 & 1 & 0 & 0 \\
0 & 0 & -1 & 0 \\
0 & 0 & 0 & -1 \\
1 & 0 & 0 & -1 \\
0 & -1 & 0 & 1 \\
0 & 0 & 1 & -1 \\
0 & 0 & -1 & 1 \\
0 & 0 & 0 & 1 \\
\end{pmatrix} 
$
\end{tabular}
\end{tabular}
\end{table}
}

$\bullet$ 77 and 88
{\tiny 
\begin{table}[H]
\centering
\begin{tabular}{ccccccccc}
\begin{tabular}{c}
\\
$1$ \\
$2$ \\
$3$ \\
$4$ \\
$5$ \\
$6$ \\
$7$ \\
$8$ \\
$9$ \\
\end{tabular}
\begin{tabular}{c}
\textbf{77} \\
$\begin{pmatrix}
1 & 0 & 0 & 0 \\
0 & 1 & 0 & 0 \\
0 & 0 & 1 & 0 \\
0 & 0 & 0 & 1 \\
-1 & 0 & 0 & 1 \\
0 & -1 & 0 & 1 \\
0 & 1 & -1 & 0 \\
0 & 1 & 0 & -1 \\
0 & -1 & 0 & 0 \\
\end{pmatrix}$
\end{tabular}
& & & 
\begin{tabular}{c}
\\
$1$ \\
$6$ \\
$3$ \\
$4$ \\
$5$ \\
$2$ \\
$9$ \\
$8$ \\
$7$ \\
\end{tabular}
\begin{tabular}{l}
\hspace{4.75em}\textbf{88} \\
$\begin{pmatrix}
1 & 0 & 0 & 0 \\
0 & -1 & 0 & 1 \\
0 & 0 & 1 & 0 \\
0 & 0 & 0 & 1 \\
-1 & 0 & 0 & 1 \\
0 & 1 & 0 & 0 \\
0 & 1 & -1 & -1 \\
0 & -1 & 0 & 0 \\
0 & 1 & 0 & -1 \\
\end{pmatrix} 
\begin{pmatrix}
-1 & 0 & 0 & 0\\
0 & 1 & 0 & -1\\
0 & 0 & 1 & 0\\
0 & 0 & 0 & -1\end{pmatrix}
=\begin{pmatrix}
-1 & 0 & 0 & 0 \\
0 & -1 & 0 & 0 \\
0 & 0 & 1 & 0 \\
0 & 0 & 0 & -1 \\
1 & 0 & 0 & -1 \\
0 & 1 & 0 & -1 \\
0 & 1 & -1 & 0 \\
0 & -1 & 0 & 1 \\
0 & 1 & 0 & 0 \\
\end{pmatrix} 
$
\end{tabular}
\end{tabular}
\end{table}
}

$\bullet$ 81 and 103
{\tiny 
\begin{table}[H]
\centering
\begin{tabular}{ccccccccc}
\begin{tabular}{c}
\\
$1$ \\
$2$ \\
$3$ \\
$4$ \\
$5$ \\
$6$ \\
$7$ \\
$8$ \\
$9$ \\
\end{tabular}
\begin{tabular}{c}
\textbf{81} \\
$\begin{pmatrix}
1 & 0 & 0 & 0 \\
0 & 1 & 0 & 0 \\
0 & 0 & 1 & 0 \\
0 & 0 & 0 & 1 \\
-1 & 0 & 0 & 1 \\
0 & -1 & 0 & 1 \\
0 & 1 & -1 & 0 \\
1 & 0 & 0 & -1 \\
0 & 0 & 0 & -1 \\
\end{pmatrix}$
\end{tabular}
& & & 
\begin{tabular}{c}
\\
$1$ \\
$6$ \\
$3$ \\
$4$ \\
$5$ \\
$2$ \\
$9$ \\
$8$ \\
$7$ \\
\end{tabular}
\begin{tabular}{l}
\hspace{4.5em}\textbf{103} \\
$\begin{pmatrix}
0 & 0 & 1 & 0 \\
0 & 0 & 0 & 1 \\
1 & 0 & 0 & 0 \\
0 & -1 & 1 & 0 \\
0 & -1 & 0 & 0 \\
0 & 1 & -1 & -1 \\
-1 & 0 & 0 & 1 \\
0 & 1 & 0 & 0 \\
0 & 1 & -1 & 0 \\
\end{pmatrix} 
\begin{pmatrix}
0 & 0 & 1 & 0\\
-1 & 0 & 0 & 1\\
-1 & 0 & 0 & 0\\
0 & 1 & 0 & 0\end{pmatrix}
=\begin{pmatrix}
-1 & 0 & 0 & 0 \\
0 & 1 & 0 & 0 \\
0 & 0 & 1 & 0 \\
0 & 0 & 0 & -1 \\
1 & 0 & 0 & -1 \\
0 & -1 & 0 & 1 \\
0 & 1 & -1 & 0 \\
-1 & 0 & 0 & 1 \\
0 & 0 & 0 & 1 \\
\end{pmatrix} 
$
\end{tabular}
\end{tabular}
\end{table}
}

$\bullet$ 82 and 91 and 107
{\tiny 
\begin{table}[H]
\centering
\begin{tabular}{ccccccccc}
\begin{tabular}{c}
\\
$1$ \\
$2$ \\
$3$ \\
$4$ \\
$5$ \\
$6$ \\
$7$ \\
$8$ \\
$9$ \\
\end{tabular}
\begin{tabular}{c}
\textbf{82} \\
$\begin{pmatrix}
1 & 0 & 0 & 0 \\
0 & 1 & 0 & 0 \\
0 & 0 & 1 & 0 \\
0 & 0 & 0 & 1 \\
-1 & 0 & 0 & 1 \\
0 & -1 & 0 & 1 \\
0 & 1 & -1 & 0 \\
0 & -1 & 0 & 0 \\
0 & 0 & 0 & -1 \\
\end{pmatrix}$
\end{tabular}
& & & 
\begin{tabular}{c}
\\
$1$ \\
$6$ \\
$3$ \\
$4$ \\
$5$ \\
$2$ \\
$9$ \\
$7$ \\
$8$ \\
\end{tabular}
\begin{tabular}{l}
\hspace{4.75em}\textbf{91} \\
$\begin{pmatrix}
1 & 0 & 0 & 0 \\
0 & -1 & 0 & 1 \\
0 & 0 & 1 & 0 \\
0 & 0 & 0 & 1 \\
-1 & 0 & 0 & 1 \\
0 & 1 & 0 & 0 \\
0 & 1 & -1 & -1 \\
0 & 1 & 0 & -1 \\
0 & 0 & 0 & -1 \\
\end{pmatrix} 
\begin{pmatrix}
-1 & 0 & 0 & 0\\
0 & 1 & 0 & -1\\
0 & 0 & 1 & 0\\
0 & 0 & 0 & -1\end{pmatrix}
=\begin{pmatrix}
-1 & 0 & 0 & 0 \\
0 & -1 & 0 & 0 \\
0 & 0 & 1 & 0 \\
0 & 0 & 0 & -1 \\
1 & 0 & 0 & -1 \\
0 & 1 & 0 & -1 \\
0 & 1 & -1 & 0 \\
0 & 1 & 0 & 0 \\
0 & 0 & 0 & 1 \\
\end{pmatrix} 
$
\end{tabular}
\end{tabular}
\end{table}
}

{\tiny 
\begin{table}[H]
\centering
\begin{tabular}{ccccccccc}
& & & 
\begin{tabular}{c}
\\
$1$ \\
$3$ \\
$2$ \\
$8$ \\
$5$ \\
$9$ \\
$6$ \\
$7$ \\
$4$ \\
\end{tabular}
\begin{tabular}{l}
\hspace{4.5em}\textbf{107} \\
$\begin{pmatrix}
1 & 0 & 0 & 0 \\
0 & 0 & 1 & 0 \\
0 & 1 & 0 & 0 \\
0 & 0 & 0 & -1 \\
-1 & 0 & 0 & 1 \\
0 & 0 & -1 & -1 \\
0 & -1 & 1 & 0 \\
0 & 0 & -1 & 0 \\
0 & 0 & 0 & 1 \\
\end{pmatrix} 
\begin{pmatrix}
-1 & 0 & 0 & 0\\
0 & 0 & -1 & 0\\
0 & -1 & 0 & 0\\
0 & 0 & 0 & 1\end{pmatrix}
=\begin{pmatrix}
1 & 0 & 0 & 0 \\
0 & -1 & 0 & 0 \\
0 & 0 & -1 & 0 \\
0 & 0 & 0 & -1 \\
-1 & 0 & 0 & 1 \\
0 & 1 & 0 & -1 \\
0 & -1 & 1 & 0 \\
0 & 1 & 0 & 0 \\
0 & 0 & 0 & 1 \\
\end{pmatrix} 
$
\end{tabular}
\end{tabular}
\end{table}
}

$\bullet$ 90 and 113
{\tiny 
\begin{table}[H]
\centering
\begin{tabular}{ccccccccc}
\begin{tabular}{c}
\\
$1$ \\
$2$ \\
$3$ \\
$4$ \\
$5$ \\
$6$ \\
$7$ \\
$8$ \\
$9$ \\
\end{tabular}
\begin{tabular}{c}
\textbf{90} \\
$\begin{pmatrix}
1 & 0 & 0 & 0 \\
0 & 1 & 0 & 0 \\
0 & 0 & 1 & 0 \\
0 & 0 & 0 & 1 \\
-1 & 0 & 0 & 1 \\
0 & -1 & 0 & 1 \\
0 & 1 & 0 & -1 \\
0 & 0 & -1 & 0 \\
0 & 0 & 0 & -1 \\
\end{pmatrix}$
\end{tabular}
& & & 
\begin{tabular}{c}
\\
$1$ \\
$6$ \\
$2$ \\
$9$ \\
$5$ \\
$8$ \\
$3$ \\
$7$ \\
$4$ \\
\end{tabular}
\begin{tabular}{l}
\hspace{4.5em}\textbf{113} \\
$\begin{pmatrix}
1 & 0 & 0 & 0 \\
0 & 0 & 1 & -1 \\
0 & 1 & 0 & 0 \\
0 & 0 & 0 & -1 \\
-1 & 0 & 0 & 1 \\
0 & 0 & -1 & 0 \\
0 & 0 & 1 & 0 \\
0 & -1 & 0 & 0 \\
0 & 0 & 0 & 1 \\
\end{pmatrix} 
\begin{pmatrix}
1 & 0 & 0 & 0\\
0 & 0 & 1 & 0\\
0 & -1 & 0 & 1\\
0 & 0 & 0 & 1\end{pmatrix}
=\begin{pmatrix}
1 & 0 & 0 & 0 \\
0 & -1 & 0 & 0 \\
0 & 0 & 1 & 0 \\
0 & 0 & 0 & -1 \\
-1 & 0 & 0 & 1 \\
0 & 1 & 0 & -1 \\
0 & -1 & 0 & 1 \\
0 & 0 & -1 & 0 \\
0 & 0 & 0 & 1 \\
\end{pmatrix} 
$
\end{tabular}
\end{tabular}
\end{table}
}


\section{$c_1$-preserving cohomology ring isomorphism}\label{sec:c1}

As we observed, cohomology rings do not distinguish toric Fano manifolds as varieties.  Very recently, motivated by McDuff's question on the uniqueness of toric actions on a monotone symplectic manifold, Y. Cho, E. Lee, S. Park and the third author made the following conjecture and verified it for Fano Bott manifolds (\cite{CLMP20}). 

\begin{conj}[\cite{CLMP20}]
If there is a cohomology ring  isomorphism between toric Fano manifolds which preserves their first Chern classes, then they are isomorphic as varieties.
\end{conj}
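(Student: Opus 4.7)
The plan is to reduce Theorem~\ref{thm:main2} to a finite case analysis and then dispose of each case by means of the invariants already tabulated in Sections~\ref{sec:Picard}--\ref{sec:dim4}. By Theorem~\ref{thm:main}, two toric Fano $d$-folds in our family have isomorphic integral cohomology rings if and only if they are diffeomorphic, so the conjecture is vacuous unless the relevant manifolds are non-isomorphic as varieties but share a common cohomology ring. These pairs (and occasional triples) are precisely the ones enclosed in braces in Tables~\ref{table:diff3fold} and~\ref{table:diff4fold}, together with $\{50,57\}$, and the families $Z_3^d\cong Z_4^d$, $W_5^d\cong W_7^d$, $W_6^d\cong W_8^d$ from Section~\ref{sec:Picard}. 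It therefore suffices to show that, for each such pair $(X_p,X_q)$, no cohomology ring isomorphism $F\colon H^*(X_p)\to H^*(X_q)$ satisfies $F(c_1(X_p))=c_1(X_q)$.

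The first step is bookkeeping: by Remark~\ref{rem:pont}, $c_1(X(\Delta))=\sum_{i=1}^m x_i$, and after using the linear relations (ii) of Proposition~\ref{prop:compute} to eliminate $d$ variables one obtains an explicit linear expression for $c_1$ in the generators used throughout Sections~\ref{sec:dim3}--\ref{sec:dim4}. I would tabulate these explicit $c_1$-expressions for each pair. The second step is to characterize the cohomology ring isomorphisms $F$. In every case the invariants already computed (s.v.e.\ and c.v.e.\ over $\ZZ$, $\ZZ/2$, $\ZZ/3$, maximal basis numbers, $4$-v.e.\ over $\ZZ/2$) force $F$ to permute certain distinguished finite subsets of $H^2$, which pins $F$ down up to a small finite group; in many cases Lemma~\ref{diffeolemma} in fact already exhibits a concrete isomorphism realizing the diffeomorphism, and the allowable freedom in $F$ reduces to composing this isomorphism with automorphisms of the target. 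The third step is to verify directly that in the resulting finite list of candidate maps, the image of $c_1(X_p)$ never equals $c_1(X_q)$.

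The paradigmatic case is $(X_{50},X_{57})$, where the isomorphism \eqref{eq:iso5057} is already on record; one checks that $c_1(X_{50})=4x-y-z+u$ is sent by \eqref{eq:iso5057} to $-4x+y+6u-z$, which differs from $c_1(X_{57})$, and the invariants in Table~\ref{d=4v=8f=17} show that any other ring isomorphism differs from \eqref{eq:iso5057} by an automorphism that still cannot match the first Chern class. The remaining pairs such as $\{70,141\}$, $\{30,43\}$, $\{28,32\}$, $\{74,96\}$, and so on are handled by the same recipe: write down $c_1$ on both sides in the presentations of Sections~\ref{sec:dim4}, read off the explicit unimodular transformation from Subsection~\ref{subsec:diffeo}, and observe that applying that transformation to the generators rearranges (and sometimes sign-flips) the $x_i$'s in a way that changes $c_1$. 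For the high-Picard-number families $Z_3^d\cong Z_4^d$ and $W_i^d$, the diffeomorphism comes from flipping a single coordinate axis (see Section~\ref{sec:Picard}), and the resulting sign change on one primitive ray vector alters exactly one term of $\sum x_i$ and thus shifts $c_1$ by twice that term, which is nonzero in $H^2$.

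The main obstacle will be the classes in which three varieties share a cohomology ring, namely $\{73,76,92\}$ and $\{82,91,107\}$, and the richer pairs like $\{74,96\}$ where the cohomology ring has a sizable automorphism group (reflected in the multiple s.v.e.\ listed in Table~\ref{d=4v=8f=16Bott}). In these cases the set of candidate isomorphisms is largest, and one must use the finer invariants ($4$-v.e.\ over $\ZZ/2$, c.v.e.\ over $\ZZ/3$) together with reductions modulo distinguished ideals (as already employed in Section~\ref{sec:dim4}) to restrict $F$ enough to rule out $c_1$-preservation. Granting the calculations above, the desired conclusion follows uniformly.
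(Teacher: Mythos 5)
There is a genuine gap, and it is the central one: you never identify a mechanism that controls \emph{all} cohomology ring isomorphisms, whereas the paper sidesteps this problem entirely with a numerical invariant. The paper's proof observes that a graded ring isomorphism maps $H^{2d}$ to $H^{2d}$ up to sign, so a $c_1$-preserving isomorphism forces the anticanonical degrees $c_1(X)^d[X]=(-K_X)^d$ to agree (both being positive). One then simply reads the degrees off {\O}bro's database (Tables~\ref{table:degree3fold} and~\ref{table:degree4fold}); they differ for every pair with isomorphic cohomology except $(X_{70},X_{141})$, which is finished by classifying the two isomorphisms in \eqref{eq:70141}, and the large Picard number families reduce to the pairs $(X_{10},X_{13})$, $(X_{72},X_{87})$, $(X_{78},X_{86})$ via the product formula for degrees. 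Your plan instead requires, for every pair, that the invariants ``pin $F$ down up to a small finite group.'' This is unproved, and for the high-Picard-number families it is false: by Lemma~\ref{lem:MBN_P6_2}, $H^*(X_{P_6})$ has infinitely many s.v.e., and its graded automorphism group (isometries of a rank-$4$ Lorentzian intersection lattice) is infinite, so $H^*(Z_3^d)$, $H^*(W_5^d)$, $H^*(W_6^d)$ for $d\ge 5$ admit infinitely many self-isomorphisms. Consequently your argument for those families --- that the coordinate-flip diffeomorphism of Lemma~\ref{diffeolemma} changes one ray vector's sign and hence shifts $\sum x_i$ --- only rules out \emph{one} candidate isomorphism (composed with a finite ambiguity you cannot justify), not all of them. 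Some device like the degree, or at least a proof that every isomorphism respects the tensor factorization and then a reduction to the $3$- and $4$-dimensional factors, is indispensable here; your proposal contains neither.

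A secondary but concrete problem: your one worked computation is wrong. Using the linear relations of Proposition~\ref{prop:compute} for $X_{50}$ one gets $x_1=x_5=x$, $x_2=x-y-z+u$, $x_3=y-x$, $x_4=z-x$, so $c_1(X_{50})=\sum_{i=1}^{8}x_i=x+y+z+2u$, not $4x-y-z+u$; similarly $c_1(X_{57})=x+y+2z+2u$. The map \eqref{eq:iso5057} sends $x+y+z+2u$ to $-x-y-2z+4u\neq c_1(X_{57})$, so your conclusion survives for this particular map, but your claim that Table~\ref{d=4v=8f=17} bounds ``any other ring isomorphism'' by a manageable automorphism group is again asserted, not shown --- the paper avoids having to show it, since $(-K_{X_{50}})^4=417\neq 369=(-K_{X_{57}})^4$ kills every $c_1$-preserving isomorphism at once.
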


In this section, we prove the following theorem mentioned in Introduction, which gives further supporting evidence to the conjecture.  

\begin{thm}
The conjecture above is true for toric Fano $d$-folds with $d=3,4$ or with Picard number $\ge 2d-2$.  
\end{thm}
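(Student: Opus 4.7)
The plan is to verify Theorem~\ref{thm:main2} as a direct consequence of the classification developed in the proof of Theorem~\ref{thm:main}. Recall from Remark~\ref{rem:pont} that the first Chern class is $c_1(X(\Delta)) = \sum_{i=1}^m x_i \in H^2(X(\Delta))$. Theorem~\ref{thm:main}, together with Tables~\ref{table:diff3fold} and~\ref{table:diff4fold} and the analyses in Section~\ref{sec:Picard}, yields a complete list of isomorphism classes of integral cohomology rings of toric Fano $d$-folds in the scope, along with the toric Fano varieties representing each. Classes with a single representative make the conjecture vacuous; for a class with multiple representatives, the representatives are mutually diffeomorphic (except possibly for the pair $X_{50}, X_{57}$, whose diffeomorphism type is unresolved) and pairwise non-isomorphic as varieties. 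It therefore suffices, within each non-singleton class, to show that no graded ring isomorphism between two distinct representatives sends the source's first Chern class to the target's.

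The uniform recipe I would follow is: (i) fix a common presentation $H^* \cong \ZZ[\xb]/\mathcal{I}$ as already recorded in the tables of Sections~\ref{sec:dim3} and~\ref{sec:dim4}; (ii) for each variety $X_{q_j}$ in the class, use Proposition~\ref{prop:compute}(ii) to express $c_1(X_{q_j})$ explicitly as an integer linear combination of the chosen generators; (iii) invoke the invariants already tabulated (sets of s.v.e., c.v.e., $k$-v.e., maximal basis numbers, and the tensor decomposition afforded by Lemma~\ref{lem:tensor}) to restrict the candidate graded ring automorphisms of $H^*$ to a tractable explicit collection; (iv) verify directly that no such candidate carries $c_1(X_{q_j})$ to $c_1(X_{q_{j'}})$ for $j \ne j'$.

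As an illustrative instance, for the exceptional pair $(X_{50}, X_{57})$ the isomorphism~\eqref{eq:iso5057} sends $c_1(X_{50}) = x+y+z+2u$ to $-x-y-2z+4u$, whereas $c_1(X_{57}) = x+y+2z+2u$; their difference is a nonzero degree-two element (no linear relation among $x, y, z, u$ holds in $H^2(X_{57})$), so this specific isomorphism already fails to preserve $c_1$, and the full automorphism group of the common ring is cut down by the invariants in Table~\ref{d=4v=8f=17} to finitely many candidates ruled out by a direct check. For the diffeomorphism pairs arising from Lemma~\ref{diffeolemma} (sign flips on certain primitive ray vectors), the natural induced isomorphism sends $x_i \mapsto \epsilon_i x_i'$ with at least one $\epsilon_i = -1$; preservation of $c_1$ would force the relation $\sum_{\epsilon_i = -1} x_i' = 0$ in $H^2(X')$, which in turn must arise from a single $u \in \Hom(N,\ZZ)$ via the linear relations (ii), a condition one verifies case by case to fail; the remaining candidate automorphisms of $H^*(X')$ are then eliminated by the same invariant-based bookkeeping.

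The main anticipated obstacle is the sheer number of multi-representative classes, especially within dimension $4$ (Table~\ref{table:diff4fold}). Each individual check is a short finite calculation, but their totality is extensive. No new conceptual tool beyond those introduced in Sections~\ref{sec:prepare}--\ref{sec:dim4} is required.
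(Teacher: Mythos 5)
Your opening reduction is the same as the paper's: it suffices to rule out $c_1$-preserving isomorphisms between the finitely many pairs (or triples) of non-isomorphic toric Fano folds that were shown in Sections~\ref{sec:Picard}--\ref{sec:dim4} to have isomorphic cohomology rings, and your sample computations ($c_1(X_{50})=x+y+z+2u$, $c_1(X_{57})=x+y+2z+2u$, the image of $c_1(X_{50})$ under \eqref{eq:iso5057}) are correct. But after that point there is a genuine gap. Showing that one particular isomorphism (the map \eqref{eq:iso5057}, or the natural sign-flip isomorphism $x_i\mapsto\epsilon_i x_i'$ coming from Lemma~\ref{diffeolemma}) fails to preserve $c_1$ proves nothing by itself: any other isomorphism is that one composed with an automorphism of the target ring, so the entire content of the theorem is the control of the full automorphism group. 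Your proposal defers exactly this step to ``invariant-based bookkeeping'' that ``cuts the automorphism group down to finitely many candidates,'' and this claim is neither carried out nor true in general. In the Picard number $\ge 2d-2$ cases the relevant rings are $H^*(Z_3^d)\cong H^*(Z_3)\otimes H^*(X_{P_6})^{\otimes\frac{d-3}{2}}$ and the like; $H^*(X_{P_6})$ has infinitely many s.v.e.\ (Lemma~\ref{lem:MBN_P6_2}) and these rings admit infinitely many graded automorphisms, so no finite list of candidates can be extracted from the tabulated invariants. Even in dimension~4 your recipe would require automorphism-group computations that appear nowhere in the proposal.

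The paper avoids all of this with one observation you are missing: if a graded ring isomorphism preserves $c_1$, it preserves $c_1^d$ evaluated on the fundamental class, i.e.\ the anticanonical degree $(-K_X)^d$ (both degrees are positive, so the sign ambiguity of the fundamental class is harmless). This single numerical invariant, read off from {\O}bro's database, differs for every relevant pair except $(X_{70},X_{141})$; for products the formula $\deg(X\times Y)=\binom{p+q}{p}\deg X\cdot\deg Y$ settles the large Picard number pairs; and for the one exceptional pair $(X_{70},X_{141})$, whose rings have only two generators, \emph{all} isomorphisms can be listed ($(x,y)\mapsto(x,x-y)$ or $(x,y)\mapsto(-x,-x+y)$) and checked not to preserve $c_1$. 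Without the degree argument (or a substitute that constrains arbitrary isomorphisms rather than selected ones), your plan does not constitute a proof.
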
 

In order to prove the theorem above, it suffices to check that there is no $c_1$-preserving cohomology ring isomorphism between toric Fano $d$-folds which have isomorphic cohomology rings.  
If there is a $c_1$-preserving cohomology ring  isomorphism between toric Fano $d$-folds $X$ and $Y$, then $c_1(X)^d$ evaluated on the fundamental class of $X$, in other words the degree $(-K_X)^d$ of $X$, agrees with that of $Y$. We obtain the following tables from the database of {\O}bro. They together with Tables~\ref{table:diff3fold} and~\ref{table:diff4fold} show that the degrees are different for toric Fano $3$- or $4$-folds which have isomorphic cohomology rings except one pair ID 70 and 141.  

{\scriptsize 
\begin{table}[H]
\centering
\begin{tabular}{|c|c|c|c|c|c|c|c|} \hline
ID & 11, 18 & 10, 13  \\ \hline
degree & 52, 44 & 44, 40  \\ \hline 
\end{tabular} 
\medskip
\caption{Degrees of toric Fano $3$-folds with isomorphic cohomology rings} 
\label{table:degree3fold}
\end{table}

\begin{table}[H]
\centering
\begin{tabular}{|c|c|c|c|c|c|c|c|} \hline
ID &  70, 141 & 30, 43 & 68, 134 & 129, 136 & 28, 32 & 67, 118 & 123, 125 \\ \hline
degree &  513, 513 & 592, 400 & 432, 480 & 496, 400 & 478, 382 & 351, 447 & 415, 367 \\ \hline 
\end{tabular} 

\bigskip
\begin{tabular}{|c|c|c|c|c|c|c|c|} \hline
ID &  74, 96&83, 108&95, 131&29, 39&111, 116&50, 57 & 73, 76, 92 \\ \hline
degree &480, 352& 448, 352& 416, 352&463, 337&389, 347&417, 369& 394, 330, 310 \\ \hline 
\end{tabular} 

\bigskip
\begin{tabular}{|c|c|c|c|c|c|c|c|} \hline
ID &  77, 88&81, 103&82, 91, 107&90, 113&72, 87&78, 86 \\ \hline
degree & 405, 331&373, 325&341, 363, 229&352, 320&308, 268&298, 278 \\ \hline 
\end{tabular} 
\medskip
\caption{Degrees of toric Fano $4$-folds with isomorphic cohomology rings} 
\label{table:degree4fold}
\end{table}
}

As for ID 70 and 141, more detailed observation is necessary.  It follows from Table~\ref{tab:VMNF69} and Remark~\ref{rem:pont} that 
\begin{equation} \label{eq:70141}
\begin{split}
H^*(X_{70})&=\ZZ[x,y]/(x^3, y(x-y)^2),\qquad c_1(X_{70})=x+3y\\
H^*(X_{141})&=\ZZ[x,y]/(x^3, y^2(x-y)),\qquad c_1(X_{141})=2x+3y.
\end{split}
\end{equation}
An elementary computation shows that an isomorphism $H^*(X_{70})\to H^*(X_{141})$ is given by either $(x,y)\to (x,x-y)$ or $(x,y)\to (-x,-x+y)$ but both isomorphisms are not $c_1$-preserving.  This completes the proof of the theorem when $d=3,4$. 

As investigated in Section~\ref{sec:Picard}, toric Fano $d$-folds with Picard number $\ge 2d-2$ which have isomorphic cohomology rings are the following three pairs:
\begin{enumerate}
\item $Z_3^d=Z_3\times (X_{P_6})^{\frac{d-3}{2}}$ and $Z_4^d=Z_4\times (X_{P_6})^{\frac{d-3}{2}}$, where $d$ is odd $\ge  3$
\item $W_5^d=W_5\times  (X_{P_6})^{\frac{d-4}{2}}$ and $W_7^d=W_7\times  (X_{P_6})^{\frac{d-4}{2}}$, where $d$ is even $\ge  4$.   
\item $W_6^d=W_6\times  (X_{P_6})^{\frac{d-4}{2}}$ and $W_8^d=W_8\times  (X_{P_6})^{\frac{d-4}{2}}$, where $d$ is even $\ge  4$,
\end{enumerate}
see \eqref{eq:Zid} and \eqref{eq:Wid}.  Here 
\[
(Z_3,Z_4)=(X_{10},X_{13}),\quad (W_5,W_7)=(X_{72},X_{87}),\quad (W_6,W_8)=(X_{78},X_{86})
\]
as mentioned above Convention in Sections~\ref{sec:dim3} and~\ref{sec:dim4}.  The degree of a product of projective varieties $X$ and $Y$ of dimension $p$ and $q$ is $\binom{p+q}{p}$ times the product of degrees of $X$ and $Y$, so it follows from Tables~\ref{table:degree3fold} and~\ref{table:degree4fold} that the three pairs above have different degrees respectively.  This completes the proof of the theorem.

\end{document}